\numberwithin{equation}{section}
\mathchardef\mhyphen="2D
\newtheorem{theorem}{Theorem}[section]
\newtheorem{lemma}{Lemma}[section]
\theoremstyle{definition}
\newtheorem{definition}{Definition}[section]
\newtheorem{example}{Example}[section]
\newtheorem{proposition}{Proposition}[section]
\newtheorem{remark}{Remark}[section]
\DeclareMathOperator*{\infd}{inf\vphantom{\operator@font p}} 
\DeclareMathOperator*{\supx}{su\smash{\operator@font p}} 
\begin{document}

\title{\textbf{On the Equivalence of Zero-Sum Games and Conic Programs}} 
\author{Nikos Dimou\footnote{Email: dimou@unc.edu.}}

\date{Department of Statistics and Operations Research\\University of North Carolina at Chapel Hill}

\maketitle

\pagestyle{myheadings}
\pagestyle{plain}


\begin{abstract}
We prove the \textit{almost equivalence} of the minimax theorem and the strong duality theorem for a large class of games and conic programs. The previous fundamental results on the equivalence of linear programming and two-player zero-sum games with simplex-strategy sets are extended to Banach spaces, and a comprehensive framework unifying two-player zero-sum games and conic linear programs is established. Specifically, we show that for every zero-sum game with a bilinear payoff function and strategy sets that represent bases of convex cones, the minimax equality holds and its game value and Nash equilibria can be found by solving a primal-dual pair of conic programs. Conversely, the minimax theorem for the same class of games ``almost always" implies strong duality of conic linear programming. In fact, we give a game-dependent characterization of strict feasibility, and show that minimax is equivalent to a generalized version of Ville's theorem of the alternative. Several well-established game classes are embedded in the introduced model, including (i) semi-infinite, (ii) semidefinite, (iii) quantum, (iv) time-dependent, and (v) polynomial games, as well as (vi) the mixed extension of any continuous game with compact strategy sets.
\end{abstract}

\noindent\textbf{Key words}: Minimax theorem, Strong duality, Conic linear programming, Two-player zero-sum games, Nash equilibrium, Almost equivalence.

\noindent\textbf{MSC 2020 subject classification}: Primary: 90C45, 90C46, 91A05; Secondary: 46N10.

\section{Introduction}\vspace{-0.2cm}

Consider a matrix $A\in\mathbb{R}^{m\times n}$ and the m--simplex $\Delta^{m}:= \left\{x\in\mathbb{R}^{m}:x\geq0,\;e_m^{\top}x=1\right\}$, where $e_m=(1,1,...,1)^{\top}\in\mathbb{R}^{m}$. The classical two-player zero-sum game \cite{VonNeumann1944-VONTOG-4} is an ordered triple $G=(\Delta^m,\Delta^n,u)$, where $\Delta^m$ ($\Delta^n$, respectively) is the set of mixed strategies for the row player $I$ (the column player $II$, resp.) and $u:\Delta^m\times\Delta^n\rightarrow\mathbb{R}$ is the payoff function of player $I$, given by $u(x,y):= x^{\top}Ay$. In his pioneering work \cite{Neumann1928}, von Neumann proved the following minimax theorem:
\begin{equation}\label{eq:1.1}
    \max_{x\in \Delta^m}\min_{y\in \Delta^n}\;x^{\top}Ay=v=\min_{y\in \Delta^n}\max_{x\in \Delta^m}\;x^{\top}Ay.
\end{equation}

Here, $v$ is called the \textit{value} of the game. It is well known (Dantzig \cite{dantzig2016linear}) that any such game has a value and optimal strategies which can be found by solving the following primal-dual pair of linear programs (LPs):\hypertarget{LPI}{}\hypertarget{LPII}{}\hypertarget{LPS}{}

\newpage
\vspace*{-0.9cm}
\begin{equation*}(P-LP)\;\;\;\;\;\;\;\;
\begin{array}{ll}
    \min\limits_{x}\;\; e_m^{\top}x
    \hspace{0.5cm}\text{subject to}\;\; x^{\top}A\geq e_n^{\top},\;\;\;x\geq 0
    \end{array}
\end{equation*}
\begin{equation*}\;\;(D-LP)\;\;\;\;\;\;\;\;
\begin{array}{ll}
    \max\limits_{y}\;\;e_n^{\top}y
    \hspace{0.5cm}\text{subject to}\;\; Ay\leq e_m,\;\;\;y\geq 0\;.\;\;\;
    \end{array}
    \end{equation*}
    
Following the use of the strong duality theorem of linear programming in the proof of the famous minimax theorem, Dantzig \cite{dantzig1951proof} considered the converse direction, and proved the ``almost equivalence" between the two fundamental theorems. The known ``gap" of the exact equivalence (in the sense that each theorem ``proves" -- according to \cite{von2024zero} -- the other), which was acknowledged by Dantzig, was addressed and filled several decades later by Adler \cite{adler2013equivalence}. In a recent paper, von Stengel \cite{von2024zero} gave an alternative proof of the fact that minimax proves strong duality of LP, and highlighted the importance of the Theorem of Tucker \cite{kuhn1956linear} in its completion.\par

Given these results the following question naturally arises: \textit{Can this relation between the two fundamental theorems be extended to a more generic class of games?} The only work we are aware of that addresses this question is that of Ickstadt et al. \cite{ickstadt2024semidefinite}, where Dantzig's aforestated results are generalized to the semidefinite setting. The latter question is strongly tied to the problem of finding exact, or even approximate, Nash equilibria in more complicated two-player zero-sum games, such as in games where both players have an infinite number of (pure) strategies. In fact, the need for extending the structural connection between the minimax theorem and strong duality to more general strategy spaces is precisely summarized in the following three concerns originally raised by Kuhn and Tucker in the preface of \cite{kuhn1953contributions}, regarding infinite (with respect to the number of strategies) and polynomial-like games \cite{dresher1950polynomial}, and which remain only partially resolved to this day:\quad\vspace{0.2cm}\par
 (5) \textit{To find structural theorems for the solutions of
comprehensive categories of infinite games, which go
beyond the polynomial-like games.}\vspace{-0.1cm}\par
 (6) \textit{How does one select the best from among the optimal
strategies in the case of infinite games?}\vspace{-0.1cm}\par
 (7) \textit{To find a computational technique of general applicability
for zero-sum two-person games with infinite
sets of strategies. A constructive method for
obtaining the optimal strategies for polynomial-like
games or some large class of non-trivial continuous
games would constitute a considerable contribution to
this problem.}

\vspace{-0.1cm}\subsection{Contributions}\vspace{-0.2cm}

In this paper, we prove the \textit{almost equivalence} between two-player zero-sum games and conic linear programs in reflexive Banach spaces. This represents an extension of the previous works of von Neumann \cite{VonNeumann1944-VONTOG-4}, Dantzig \cite{dantzig1951proof, dantzig2016linear}, Adler \cite{adler2013equivalence}, and von Stengel \cite{von2024zero}, who separately showed the equivalence in the case where the strategy spaces are standard simplices defined over $\mathbb{R}^n$, and the conic linear programs are LPs. The term ``almost" here is inspired by the one originally attributed to Dantzig \cite{dantzig1951proof}, but it has a different interpretation, in the sense that there is now no ``gap" on the equivalence to be filled. For a brief justification of this characterization and a terse description of our main contributions, we show that the strong duality theorem of conic linear programming always proves the minimax theorem for games with a bilinear payoff function and strategy sets that are bases of convex cones, while the converse is true in all but a particular game-dependent case.\par

In order to prove this concrete relation between the two theorems, we split the present work into two parts, each one dealing with one direction of the equivalence. In the first part, we address the open problems raised by Kuhn and Tucker \cite{kuhn1953contributions} by introducing  a wide class of two-player zero-sum games that can be described and solved by a primal-dual pair of conic linear programs. In particular, we show that in every two-player zero-sum game $G=(S,T,u)$ with a bilinear payoff function of the form $u(x,y)=\langle y,Ax\rangle$, for some linear operator $A$, and strategy sets $S,\;T$ that are \textit{cone-leveled}, the generalized minimax equation
\begin{equation}\label{eq:1.2}
    \sup_{x\in S}\inf_{y\in T}u(x,y)=v=\inf_{y\in T}\sup_{x\in S}u(x,y)
\end{equation} 
always holds and the game value $v$ is equal to $1/V$, where $V$ is the common optimal value of a primal-dual pair of conic programs, provided that a zero-duality gap exists for that specific pair. Informally, a cone-leveled set $S$ defined here is the intersection of a convex cone $C$ with a, possibly infinite, number of (parallel) hyperplanes. The primal-dual pair associated with the game, and whose common optimal value is equal to $V$, is of the form:
\hypertarget{primaldual}{} \hypertarget{primal}{} \hypertarget{dual}{}
\vspace*{-0.9 cm}\begin{center}
\begin{equation*}{(P)\;\;\;\;\;\;\;\;}
\begin{array}{ll}
    \inf\limits_{x}\;\; \langle c,x\rangle\\
    \hspace{0.05cm}\text{s.t.}\;\; Ax-b\in K^*\\ \;\;\;\;\;\;\;\;\;x\in C
    \end{array}
\end{equation*}
\end{center}
\vspace{-1.2 cm}\begin{center}
\begin{equation*}{\;\;\;\;\;\;(D)\;\;\;\;\;\;\;\;}
\begin{array}{ll}
    \sup\limits_{y}\;\; \langle y,b\rangle\\
    \hspace{0.05cm}\text{s.t.}\;\; -A^*y+c\in C^*\\ \;\;\;\;\;\;\;\;\;\hspace{0.035cm}y\in K
    \end{array}
    \end{equation*}
\end{center}
\vspace{-0.2cm}where $C,\;K$ are convex cones related to the geometric structure of the strategy sets $S,\;T$, respectively, $C^*,\;K^*$ are their positive duals, $A$ is the operator associated with the payoff function of the game, and $A^*$ is its adjoint operator.\par

Further, we show that in the special case where the strategy sets $S,\;T$ of the players are \textit{bases} of convex cones defined over reflexive Banach spaces (see Definition \ref{def:2.2} and \cite{casini2010cones}), strong duality of the -- associated with the game -- primal-dual pair always holds and it implies the minimax equation\vspace{0.2cm}
\begin{equation}\label{eq:1.3}
    \max_{x\in S}\min_{y\in T}\;\langle y,Ax\rangle=\min_{y\in T}\max_{x\in S}\;\langle y,Ax\rangle.
\end{equation}

Certain parts of the constructive proofs for the two minimax theorems given, namely the minimax theorem for cone-leveled sets (relation (\ref{eq:1.2}); Theorem \ref{th:3.1}) and the minimax theorem for bases (relation (\ref{eq:1.3}); Theorem \ref{th:3.3}), are standard and similar reduction methods of conic programs used here appear in the original works of von Neumann \cite{VonNeumann1944-VONTOG-4} and Dantzig \cite{dantzig1956constructive, dantzig2016linear}, as well as in \cite{ickstadt2024semidefinite}, in the context of linear programming and semidefinite programming, respectively. The difficulty of generalizing these methods to the infinite-dimensional case can be spotted in the far more general conic form of the problems \hyperlink{primal}{$(P)$} and \hyperlink{dual}{$(D)$}, in comparison to that of the linear programs \hyperlink{LPI}{$(P-LP)$} and \hyperlink{LPII}{$(D-LP)$}. In order to achieve the natural extension of such methods, we introduce a notion of ``equivalence" between two convex programs (Definition \ref{def:equivalence}), and then utilize the geometrical structure of cone-leveled sets (Lemma \ref{lem:3.4}) and properties of strictly positive functionals related to the interiors of closed convex cones in reflexive spaces \cite[Theorem 3.6]{casini2010cones}.\par

In the second main part of this paper, we show that the minimax theorem for bases of convex cones ``almost always" proves strong duality of conic linear programming, that is, we show that the two theorems are \textit{almost equivalent} (Theorem \ref{th:4.1}). For this direction of the equivalence, two major difficulties immediately arise when transitioning from $\mathbb{R}^n$ to any Banach space, both of which indicate that the methods previously used by both Adler \cite{adler2013equivalence} and von Stengel \cite{von2024zero} are no longer applicable: First and foremost, one now has to deal with abstract linear operators instead of real matrices. Therefore, a constructive proof of the Theorem of Tucker by Gordan's Theorem, as one was given by Adler \cite[Theorem 13]{adler2013equivalence} and a different one later on by von Stengel \cite[Theorem 6]{von2024zero} in the case of $\mathbb{R}^n$, is not possible in more general strategy spaces. Second, it is known that, given a primal-dual pair of conic linear programs other than LP, feasibility alone is often inadequate for strong duality to hold, even in finite-dimensional Euclidean spaces (see, e.g., \cite[Example 4.1.2]{wolkowicz2012handbook}). As a consequence, the famous Lemma of Farkas \cite[Proposition 1]{von2024zero}, which plays a significant role in the two aforementioned works, cannot guarantee a zero duality gap in the case where the problems \hyperlink{primal}{$(P)$} and \hyperlink{dual}{$(D)$} do not represent a dual pair of LPs, unless additional assumptions are made (Craven and Koliha \cite{craven1977generalizations}).\par

To overcome these issues, we take a different approach to the previous works \cite{adler2013equivalence,  dantzig1951proof, ickstadt2024semidefinite, von2024zero} and study two cases regarding the ``fairness" of any arbitrarily-chosen game ($v\neq0$ and $v=0$). First, we show that the existence of Nash equilibria is equivalent to a strong generalized version of Ville's theorem \cite{ville1938theorie} -- a theorem of the alternative -- when the game value is non-zero (Lemma \ref{lem:4.2}). This result directly implies strict feasibility, which is sufficient for strong duality between \hyperlink{primal}{$(P)$} and \hyperlink{dual}{$(D)$} to hold. We then give a complete characterization of strict feasibility for both \hyperlink{primal}{$(P)$} and \hyperlink{dual}{$(D)$} (Lemma \ref{lem:4.3}), when the game is fair (i.e., when the value of the chosen game is equal to zero). In contrast to Dantzig's initial attribution, the term ``almost" in the almost equivalence is then justified by the fact that there exists a particular case, when the game is fair, where neither of the problems \hyperlink{primal}{$(P)$} and \hyperlink{dual}{$(D)$} is strictly feasible, the complementary conditions fail, and a non-zero duality gap exists. It is shown that this unique case solely relies on the value of the game and on simple geometrical characteristics of the optimal strategies of the players. We then illustrate this ``pathology" via an example (Example \ref{ex:4.4}), which proves that the exact equivalence, as this was shown in \cite{adler2013equivalence, von2024zero}, isn't true in more general strategy spaces.\par

Moreover, we show that a theorem of the alternative, which can be understood as a natural generalization of Ville's Theorem \cite{ville1938theorie}, is equivalent to the minimax theorem (Theorem \ref{th:4.5}). This equivalence is important for two reasons. First, it implies that a different proof for Theorem \ref{th:3.3} can be given, one which does not use strong duality, but which instead relies on separating hyperplane theorems (see \cite{doi:10.1080/02331938908843428, yang2000theorems}). Second, it verifies the fact that the minimax theorem does not always prove strong duality: This theorem of the alternative does not always guarantee a zero duality gap for \hyperlink{primaldual}{$\{(P),(D)\}$}, in comparison to the Theorem of Ville which does so for a pair of LPs (see \cite{adler2013equivalence, von2024zero}).

\vspace{-0.1cm}\subsection{The importance of the almost equivalence}\vspace{-0.2cm}

The significance of the almost equivalence can be seen, in the simplest case possible, by the already known connection between the game $G=(\Delta^m,\Delta^n,u$) $\left(\text{with\;}u(x,y)=x^{\top}Ay\right)$ and the pair \hyperlink{LPS}{$\{(P-LP),(D-LP)\}$} of linear programs: The calculation of the game value and of a saddle point in mixed strategies reduces to the use of Dantzig's famous simplex method \cite{dantzig1956constructive, dantzig2016linear} or other suitable algorithms (e.g., \cite{karmarkar1984new}). Here, the primal-dual pair \hyperlink{primaldual}{$\{(P),(D)\}$} that is associated with the game constitutes a very general form of conic programs, whose subclasses include, among others, those of linear programming (LP), semidefinite programming (SDP), semi-infinite programming (SIP), quadratic programming (QP), copositive programming (CP), second order cone programming (SOCP), continuous linear programming (CLP), and general capacity (GCAP) problems. Therefore, the toilsome task of computing a game value and (approximate) Nash equilibria simply boils down to applying appropriate algorithms, a plethora of which are already known for all of the preceding classes of convex programs; see \cite{alizadeh2003second,anderson1989capacity,  dantzig2016linear, karmarkar1984new, lai1992extremal, levinson1966class, nesterov1994interior, nocedal1999numerical, shapiro2009semi, weiss2008simplex, wolkowicz2012handbook} and the references within.\par
The applicability of the almost equivalence is, in fact, twofold. Besides the practical utility of Theorem \ref{th:3.3} in the computation of Nash equilibria, the second main result, Theorem \ref{th:4.1}, equips conic program solvers with a novel method for determining strict feasibility for primal-dual pairs: One simply has to study the value and optimal strategies of their favourite constructed two-player zero-sum game in an appropriately indicated manner. By ``construction" of a game here we mean the selection of interior points $\alpha\in\text{int}C^*$ and $\beta\in\text{int}K^*$ that generate the strategy sets. Therefore, the fundamental problem of verifying \textit{Slater's CQ} -- a vital condition for both the existence of a zero-duality gap \cite{anderson1983review, bonnans2013perturbation, nash1987linear, shapiro2001duality} and the fast convergence of interior-point algorithms to approximate stationary points \cite{nesterov1994interior, nocedal1999numerical, wolkowicz2012handbook} of conic optimization problems -- is precisely equivalent to studying the properties of suitably chosen zero-sum games (see Example \ref{ex:4.4} for a simple illustration in the case of SDPs). To the best of our knowledge, Theorem \ref{th:4.1} is the first result that directly relates the existence of strictly feasible solutions with characteristics of zero-sum games (here the game value and geometric properties of Nash equilibria) for such a large class of conic programs.

\vspace{-0.1cm}\subsection{Related work}\label{sec:relatedwork}\vspace{-0.2cm}

A number of independent works have given partial answers to the questions raised by Kuhn and Tucker \cite{kuhn1953contributions}, by considering very specific classes of zero-sum games and conic linear programs. In 1989, Anderson et al. \cite{anderson1989capacity} discussed the application of general capacity problems in two-player zero-sum games and the existence and calculation of Nash equilibria in mixed strategies. References regarding the connection between games and CLPs have appeared previously in Bellman \cite{bellman1966dynamic} and Nash and Anderson \cite{nash1987linear}. Using Duffin's \cite{duffin1956infinite} fundamental work on infinite programs, Soyster \cite{soyster1975semi} studied zero-sum games where one of the players has a set of infinite pure strategies. The same class of games was studied a few years later by Tijs \cite{tijs1979semi}, who utilized the theory of semi-infinite programming to prove the minimax theorem and the existence of optimal strategies for one of the players. Another area of mathematical programming related to various classes of zero-sum games is that of semidefinite programming. In particular, Parrilo \cite{parrilo2006polynomial} showed that
polynomial games (games with a polynomial payoff function; see \cite{dresher1950polynomial}) can be described and solved by a pair of SDP problems. SDPs also play a significant role in non-interactive quantum games; see, for example, Jain and Watrous \cite{jain2009parallel}, and the references within. A recent treatment of generalized semidefinite games through semidefinite programming has been developed by Ickstadt et al. \cite{ickstadt2024semidefinite}.\par
In Section \ref{sec:5} it is shown that, in all of the above games, the strategy sets of the players have the cone-leveled structure introduced. Therefore, all of the aforestated classes of two-player zero-sum games are encompassed within the proposed game model, and the corresponding computational and existential results regarding Nash equilibria are derived from invoking Theorems \ref{th:3.1} and \ref{th:3.3}.\par

The minimax theorems of the present work are not the first results that guarantee the existence of Nash equilibria in games defined over such a broad topological setting. In fact, Theorem \ref{th:3.3} is a special case of the classical minimax theorem for games with continuous payoff functions and convex compact strategy sets in locally convex topological vector spaces (Fan \cite{fan1952fixed}, Glicksberg \cite{glicksberg1952further}). This is due to the fact that bases of convex cones are compact in reflexive Banach spaces \cite[Lemma 3.4]{casini2010cones}. Note, however, that the proof of Theorem \ref{th:3.3} is constructive and -- in comparison to the previous classical works -- it does not rely on fixed-point theorems, as we construct pairs of conic linear programs and relate their optimal solutions with Nash equilibria.\par

Regarding the equivalence of the minimax theorem and strong duality, we have already mentioned the works of Dantzig \cite{dantzig1951proof}, Adler \cite{adler2013equivalence}, and von Stengel \cite{von2024zero}. Both Adler \cite[Theorem 11]{adler2013equivalence} and von Stengel \cite[Proposition 4]{von2024zero} have also shown that the Theorem of Ville \cite{ville1938theorie} and the classical minimax theorem (\ref{eq:1.1}) are, in fact, equivalent. Therefore, Theorem \ref{th:4.5} represents a natural extension of these results to reflexive Banach spaces.\par

The most relevant work to ours is that of Ickstadt et al. \cite{ickstadt2024semidefinite}. There, the authors present a natural generalization of Dantzig's \cite{dantzig1951proof} result of the equivalence to the space of real symmetric matrices. The corresponding optimization setting is that of semidefinite programming, a setting where strong duality can fail (see \cite[Example 4.1.2]{wolkowicz2012handbook}), so the notion of \textit{almost equivalence} in \cite{ickstadt2024semidefinite} is aligned with the one presented here. Therefore, our work can be regarded as an extension of the latter paper rather than one of \cite{adler2013equivalence, dantzig2016linear, VonNeumann1944-VONTOG-4,von2024zero}. At the same time, the two results of the almost equivalence, namely \cite[Theorem 5.3]{ickstadt2024semidefinite} and Theorem \ref{th:4.1}, are of different nature, as are the methods used for their proofs. In the following points we highlight these key differences and argue that, besides the evident wider range of game spaces covered, the almost equivalence presented here is stronger than that of \cite{ickstadt2024semidefinite} (and thus than that of \cite{dantzig1951proof}):\vspace{-0.2cm}
\begin{enumerate}
    \item The application of the minimax theorem appears to be different in the two works. In that of Ickstadt et al. \cite{ickstadt2024semidefinite}, a symmetric version of minimax is invoked to prove the existence of a solution to a generalized Dantzig's game \cite{dantzig1951proof}, with game value equal to zero. In turn, this is used to prove the consistency of a semidefinite system. Instead, in the present work we invoke the minimax theorem to guarantee the existence of a game value (not necessarily zero) and of Nash equilibria (not necessarily symmetric) for any chosen game, which we then study in order to determine the existence of feasible interior points.\vspace{-0.1cm}
     \item Another major difference can be spotted in the restrictive nature of the two results. In order to provide a criterion for strong duality, the authors of \cite{ickstadt2024semidefinite} follow Dantzig's method of studying a specific symmetric game. Rather than utilizing a fixed game, Theorem \ref{th:4.1} allows one to freely choose any strategy sets $S,\;T$, and then study the resulting two-player zero-sum game.\vspace{-0.1cm}
    \item In the cases where the main result of \cite{ickstadt2024semidefinite} guarantees strong duality for a pair of SDPs, it does so without providing any information about the existence of strictly feasible solutions. Here, we are able to guarantee strict feasibility in a number of game-dependent cases, with the latter being a significantly stronger property than that of strong duality (see Proposition \ref{prop:2.1}).\vspace{-0.1cm}
    \item Unlike duality theory of linear programming, it is common in more generic conic program settings, such as in SDPs, that a zero-duality gap exists while at least one program does not attain its optimal value \cite[Example 4.1.1]{wolkowicz2012handbook}. This work covers this ordinary case, in comparison to \cite{ickstadt2024semidefinite} which does not  (Example \ref{ex:example2}).\vspace{-0.1cm}
    \item Theorem \ref{th:4.1} can guarantee a zero-duality gap -- in fact strict feasibility -- in a certain pathological subcase not covered by the main result of \cite{ickstadt2024semidefinite} (Theorem \ref{th:SDPsubcase}).
\end{enumerate}

It should be mentioned that the criterion of \cite{ickstadt2024semidefinite} is oftentimes easier to verify in practice; the latter demands solving a certain feasibility system, while Theorem \ref{th:4.1} requires solving a user-chosen pair of conic programs. This should not be surprising, however, as Theorem \ref{th:4.1} guarantees additional properties and covers more cases in a larger span of game spaces and optimization settings.\par

A final, noteworthy mention is the work of Karlin \cite{karlin1953theory}, who was the first to study two-player zero-sum games defined over Banach spaces, with basic cone-leveled strategy sets, and payoff functions of the more general form $u(x,y):=\frac{\langle y,Ax\rangle}{\langle y,Bx\rangle}$. However, no connections with duality theory of conic convex programming appear in his work.

\vspace{-0.1cm}\subsection{Structure of the paper}\label{sec:1.3}\vspace{-0.2cm}

The rest of the paper is organized as follows. In Section \ref{sec:2} we give a self-contained review of two-player zero-sum games and duality theory of conic linear programming, and we formally define cone-leveled sets and bases of convex cones. The first direction of the almost equivalence is proved in Section \ref{sec:3}, while Section \ref{sec:4} includes the opposite one. Section \ref{sec:5} contains a number of applications and examples to various classes of two-player zero-sum games. In Section \ref{sec:6} we extend Theorem \ref{th:3.1} to games with finite intersections and unions of cone-leveled sets. In the last section we briefly discuss future directions and set some open questions related to our work.

\vspace{-0.1cm}\section{Preliminaries and notation}\label{sec:2}\vspace{-0.2cm}

Let $(X,W)$, $(Z,Y)$ be dual pairs of vector spaces, meaning that bilinear forms $\langle\cdot,\cdot\rangle_X:W\times X\rightarrow\mathbb{R}$, $\langle\cdot,\cdot\rangle_Z:Y\times Z\rightarrow\mathbb{R}$ are defined, and that each vector space separates the points of the other with respect to the corresponding bilinear form, i.e., if $\langle w,\cdot\rangle_X=0$ then $w=0_W$, and if $\langle\cdot,x\rangle_X=0$ then $x=0_X$. Assuming that there is no confusion between the pairings, we denote both bilinear functions by $\langle\cdot,\cdot\rangle$. In order to deal with the most general case possible, for the first part of Section \ref{sec:3} the pairings are assumed to be dual pairs of locally convex topological vector spaces, while for the second part these will represent pairs of reflexive Banach spaces equipped with their strong topologies (see Subsection \ref{sec:3.2}). Recall that a Banach space $X$ is reflexive if and only if every bounded sequence in $X$
has a weakly convergent subsequence. Also recall that every finite-dimensional normed space is a reflexive Banach space. Given a linear operator $A:X\rightarrow Z$ we assume that \textit{for every $y\in Y$ there exists a unique $w\in W$ such that $\langle y,Ax\rangle=\langle w,x\rangle$ for every $x\in X$}. That is, the adjoint of $A$, $A^*:Y\rightarrow W$, is a linear operator and it is well defined by $\langle A^*y,x\rangle=\langle y,Ax\rangle$.  The latter assumption particularly holds if we equip the spaces $X,\;Z$ with any compatible topologies with respect to their pairing.\par

 Our primary goals rely heavily on dealing with convex cones. A set $C\subseteq X$ is a \textit{convex cone} if $\lambda C\subseteq C$ for every $\lambda\geq0$ and $\kappa C+(1-\kappa)C\subseteq C$ for every $\kappa\in[0,1]$. A cone with a non-empty interior is called \textit{solid}. We denote the interior of a cone $C$ by int$C$. The \textit{positive dual cone} of $C$ is defined by
\begin{equation*}
    C^*:= \bigl\{w\in W:\langle w,x\rangle \geq0\;\forall x\in C\bigr\}.
\end{equation*}
 We also define the set of all strictly positive functionals associated with the cone $C$, called the \textit{strict positive dual cone} of $C$, as follows:
\begin{equation*}
    C^{*s}:= \Bigl\{w\in W:\langle w,x\rangle>0\;\forall x\in C\setminus\{0_X\}\Bigr\}.
\end{equation*}

\vspace{-0.3cm}\subsection{Conic linear programming}\label{sec:2.1}\hypertarget{primalbeta}{}\hypertarget{dualalpha}{}\hypertarget{newprimaldual}{}\vspace{-0.2cm}

Consider the primal program \hyperlink{primal}{$(P)$} and its dual \hyperlink{dual}{$(D)$}, as above. We will oftentimes refer to this pair as \hyperlink{primaldual}{$\{(P),(D)\}$}. When dealing with primal-dual pairs of this form, where $b$ is replaced by some $\beta\in Z$ and $c$ by some $\alpha\in W$, we shall refer to them as \hyperlink{newprimaldual}{$\{(P_\beta),(D_\alpha)\}$}. The feasible (or feasibility) sets of \hyperlink{primal}{$(P)$} and \hyperlink{dual}{$(D)$}, respectively, are defined by
\begin{equation*}
    \mathcal{F}(P):= \{x\in X:x\in C,\;Ax-b\in K^*\}\mbox{\;\;and\;\;}\mathcal{F}(D):= \{y\in Y: y\in K,\;-A^*y+c\in C^*\}.  
\end{equation*}
 The optimal values of \hyperlink{primal}{$(P)$} and \hyperlink{dual}{$(D)$}, are given by
\begin{equation*}
   \text{val}(P):= \inf\{\langle c,x\rangle :x\in\mathcal{F}(P)\}\mbox{\;\;and\;\;} \text{val}(D):= \sup\{\langle y,b\rangle : y\in\mathcal{F}(D)\},
\end{equation*}
when the corresponding feasibility sets are non-empty (we define $\text{val}(P):=  +\infty$, $\text{val}(D):=  -\infty$ when $\mathcal{F}(P)=\emptyset$, $\mathcal{F}(D)=\emptyset$, respectively). In the sequent, we say that the pair \hyperlink{primaldual}{$\{(P),(D)\}$} is \textit{feasible}, if both problems have non-empty feasible sets. Furthermore, we say that $x^*$ is an \textit{optimal solution} for \hyperlink{primal}{$(P)$} if $x\in\mathcal{F}(P)$ and $\langle c,x^*\rangle=\text{val}(P)$. Similarly, we say that $y^*$ is an \textit{optimal solution} for \hyperlink{dual}{$(D)$} if $y^*\in\mathcal{F}(D)$ and $\langle y^*,b\rangle=\text{val}(D)$. We denote by $\mathcal{S}(P)$ and $\mathcal{S}(D)$ the sets of optimal solutions for \hyperlink{primal}{$(P)$} and \hyperlink{dual}{$(D)$}, respectively.\par

 The following weak duality relation always holds: $\text{val}(P)\geq \text{val}(D)$. We say that there exists (or is) a \textit{zero (or no) duality gap} between \hyperlink{primal}{\hyperlink{primal}{$(P)$}} and \hyperlink{dual}{\hyperlink{dual}{$(D)$}} if and only if $\text{val}(P)=\text{val}(D)$. If, in addition, $\mathcal{S}(D)$ is non-empty, then we say that \textit{strong duality holds} between \hyperlink{primal}{$(P)$} and \hyperlink{dual}{$(D)$}. Note that this relation is not symmetric.  Moreover, there exists a zero duality gap between \hyperlink{primal}{\hyperlink{primal}{$(P)$}} and \hyperlink{dual}{\hyperlink{dual}{$(D)$}} and the feasible solutions $x^*,\;y^*$ are optimal if and only if the following condition (\textit{complementary slackness}) holds:
\begin{equation}\label{eq:2.3}
    \langle y^*, Ax^*-b\rangle=\langle c-A^*y^*,x^*\rangle=0.
\end{equation}
A plethora of conditions, also known as \textit{constraint qualifications} (CQs), under which strong duality for the pair \hyperlink{primaldual}{$\{(P),(D)\}$} holds, exist in the relevant literature of conic linear programming, whether this takes a special form  \cite{alizadeh2003second,anderson1989capacity,  dantzig2016linear, karmarkar1984new, lai1992extremal, levinson1966class, nesterov1994interior, shapiro2009semi, weiss2008simplex, wolkowicz2012handbook} or not \cite{anderson1983review, bonnans2013perturbation, khanh2019necessary, kretschmer1961programmes, nash1987linear, shapiro2001duality}. Perhaps the most common CQ that appears in the literature is the one related to the existence of feasible interior points, also known as \textit{Slater's CQ}:
\begin{definition}\label{def:slaters}
    We say that $x\in X$ is a \textit{strictly feasible solution} for \hyperlink{primal}{$(P)$} if $x\in C$ and $Ax-b\in \text{int}K^*$. Accordingly, we say that $y\in Y$ is a \textit{strictly feasible solution} for \hyperlink{dual}{$(D)$} if $y\in K$ and $-A^*y+c\in \text{int}C^*$. Moreover, we say that \hyperlink{primal}{$(P)$} (\hyperlink{dual}{$(D)$} resp.) is \textit{strictly feasible} if it has a strictly feasible solution.
\end{definition}
The following strong duality result (Bonnans and Shapiro \cite[Theorem 2.187]{bonnans2013perturbation}, Shapiro \cite[Proposition 2.8]{shapiro2001duality}, Nash and Anderson \cite[Theorem 3.13]{nash1987linear}, Anderson \cite[Theorem 10]{anderson1983review}) will be used extensively in order to prove the almost equivalence. It is assumed here that $C,\;K$ are closed convex cones, $K^*$ is a solid cone, $X,\;Y$ are Banach spaces endowed with their strong topologies and $W=X^*,\;Z=Y^*$ are their corresponding dual spaces endowed with their weak-$*$ (or strong if they are all reflexive spaces) topologies.\quad\vspace{0.2cm}
\begin{proposition}\label{prop:2.1}
    \textit{If \hyperlink{primal}{$(P)$} is strictly feasible and \hyperlink{dual}{$(D)$} is feasible, then $\text{val}(P)=\text{val}(D)$ and the set $\mathcal{S}(D)$ is non-empty, convex, and bounded.}
\end{proposition}

\begin{remark}\label{rem:reflexive}
    The main reason why we will work with pairs of reflexive Banach spaces of the form $(X,X^*)$ to prove the almost equivalence is because $X$ is isometric with its double dual $X^{**}$. This allows for various important duality results, such as Proposition \ref{prop:2.1}, to be properly ``dualized", i.e., to be stated for the corresponding dual problem without any additional assumptions or complicated modifications in their proofs. In essence, this follows from the fact that, if the reflexive spaces $X,\;X^*$ are equipped with their corresponding strong topologies ($Y$, $Y^*$, resp.) and the cones $C,\;K$ are closed, then the dual of the dual problem \hyperlink{dual}{$(D)$} is exactly the primal problem \hyperlink{primal}{$(P)$} (see Bonnans and Shapiro \cite[pp. 126--128]{bonnans2013perturbation}). Therefore, in what follows we will be stating and invoking the dual counterparts of certain results (e.g., Proposition \ref{prop:2.1}) without providing a complete, formal proof. An additional reason why reflexive Banach spaces are suitable for the game-theoretic framework presented can be spotted in the resulting relationship between the interiors of $C^*$ and $C^{*s}$, for any closed convex cone $C\subseteq X$ (Lemma \ref{lem:3.4}).   
\end{remark}

\vspace{-0.3cm}\subsection{Two-player zero-sum games}\vspace{-0.2cm}

 A \textit{two-player zero-sum game} is an ordered triple $G=(S, T,u)$, where $S\subseteq X$ and $T\subseteq Y$ are the \textit{strategy sets} of players $I$ and $II$, respectively, and $u:S\times T\rightarrow \mathbb{R}$ is the \textit{payoff function} of player $I$ (the payoff function of player $II$ is given by $-u$). We say that the \textit{minimax equality holds} if
\begin{equation}\label{eq:2.4}
    \max_{x\in S}\min_{y\in T}u(x,y)=\min_{y\in T}\max_{x\in S}u(x,y).
\end{equation}
 We define the \textit{security levels} of players $I$ and $II$ by $\underline{v}$ and $\overline{v}$, respectively, where $\underline{v}$ ($\overline{v}$, resp.) is the left-hand-side (rhs, resp.) of (\ref{eq:1.2}). These are also called the \textit{lower value} and \textit{upper value} of the game, respectively. The following minimax relation is derived from definition:
\begin{equation}\label{eq:2.5}
    \underline{v}\leq\overline{v}\;.
\end{equation}
  We say that the game $G=(S,T,u)$ has a \textit{value} if equation (\ref{eq:1.2}) holds, and we denote that value by $v:=  \underline{v}=\overline{v}$. Further, we say that $(x^*,y^*)\in S\times T$ is a \textit{Nash equilibrium} or a \textit{saddle point} if and only if
\begin{equation}\label{eq:2.6}
    u(x^*,y)\geq u(x^*,y^*)\geq u(x,y^*)\;\;\forall x\in S,\;\forall y\in T.
\end{equation}
 A strategy $x^*$ of player $I$ is called \textit{optimal} if it satisfies $u(x^*,y)\geq v$ $\forall y\in T$, or equivalently, if $v=\inf\limits_{y\in T}u(x^*,y)$, when $v$ exists. Accordingly, a strategy $y^*$ of player $II$ is called \textit{optimal} if it satisfies $u(x,y^*)\leq v$ $\forall x\in S$, or equivalently, if 
 \;$v=\sup\limits_{x\in S}u(x,y^*)$, when $v$ exists. (Several authors, e.g., von Stengel \cite{von2002computing}, also call $x^*$ a \textit{best response} to $y^*\in T$, if $u(x^*,y^*)\geq u(x,y^*)$ $\forall x\in S$.) Observe that the strategies $x^*$,\;$y^*$ of players $I$ and $II$ are optimal if and only if $(x^*,y^*)$ is a saddle point. We denote the sets of optimal strategies for players $I$ and $II$ by $\mathcal{B}^{I}$ and $\mathcal{B}^{II}$, respectively.

\vspace{-0.1cm}\subsection{Cone--leveled sets and bases of convex cones}\vspace{-0.2cm}

\begin{definition}\label{def:2.2}
A set $S\subseteq X$ is called \textit{cone-leveled} if there exist $\alpha\in W\setminus\{0_W\}$, a convex cone $C\subseteq X$, and a non-empty set $H\subseteq [p,q]$ with $p,q\in H$ for some real values $0<p\leq q$, such that 
\begin{equation}\label{eq:2.7}
    S=\bigl\{x\in C:\langle \alpha,x\rangle\in H\bigr\},
\end{equation}
and such that the set $S_r:=  \bigl\{x\in C:\langle \alpha,x\rangle =r\bigr\}$ is non-empty for $r=p,q$.  If $p=q$, then $H=\{p\}$ and $S$ is said to be a \textit{basic cone-leveled} set.
\end{definition}
A basic cone-leveled set S defined by a closed convex cone $C\subseteq X$ and some $\alpha\in \text{int}C^*$ is called a \textit{base} of $C$ (see, e.g., \cite{casini2010cones}).\par

Basic cone-leveled sets can be thought of as generalizations of conic slices (or conic sections) from the real Euclidean space to more generic topological vector spaces. Further, if $H=[p,q]$, then $S$ is convex; it is in fact the intersection of a convex cone with two closed half spaces.

\vspace{-0.1cm}\section{Strong Duality proves the Minimax Theorem}\label{sec:3}\vspace{-0.2cm}

In this section we prove the key minimax theorems of this paper. We first describe the class of games considered. Let $A:X\rightarrow Z$ be a linear operator, where $X,Z$ represent locally convex topological vector spaces, as in Section \ref{sec:2}. We study two-player zero-sum games $G=(S,T,u)$ of the following form:\hypertarget{f1}{}\hypertarget{f2}{}\hypertarget{f3}{} \\ \\
\textbf{\hyperlink{f1}{(F1)}}: The strategy sets $S\subseteq X$, $T\subseteq Y$ are cone-leveled sets.\vspace{0.1cm} \\ 
\textbf{\hyperlink{f2}{(F2)}}: The payoff function is given by $u(x,y):= \langle y,Ax\rangle$, for every $x\in S$, $y\in T$.\vspace{0.1cm} \\
\textbf{\hyperlink{f3}{(F3)}}: The security levels $\underline{v},\;\overline{v}$ of players $I$ and $II$, respectively, take finite values over $S,\;T$.\vspace{0.2cm}

 The last property indicates that we only deal with essential games, in which no player can guarantee an infinite payoff, and the game value is not infinite. The cone-leveled set $S$ here is defined as in (\ref{eq:2.7}). Moreover, the cone-leveled strategy set $T$ is defined by a convex cone $K\subseteq Y$, a set $Q\subseteq[p',q']$ \big{(}with $p',q'\in Q$ for some $p',q'\in (0,+\infty)$\big{)} and some $\beta\in Z$, in similar vein to (\ref{eq:2.7}). Throughout we will assume, for simplicity of exposition, that the sets $Q$, $H$ are subsets of the intervals $[1,\delta]$ and $[\gamma,1]$ for some $\gamma\in(0,1]$ and $\delta\geq1$, respectively.

 \vspace{-0.1cm}\subsection{Games with cone-leveled strategy sets}\label{sec:3.1}\vspace{-0.2cm}

\begin{theorem}[Minimax theorem for cone-leveled sets]\label{th:3.1}
Consider a two-player zero-sum game $G=(S,T,u)$ defined by \hyperlink{f1}{(F1)}-\hyperlink{f3}{(F3)}, and assume that $\underline{v}>0$. If the pair \hyperlink{newprimaldual}{$\{(P_\beta),(D_\alpha)\}$} is feasible and $\text{val}(P_\beta)=\text{val}(D_\alpha)>0$, then $\underline{v}=\overline{v}$ and $v=\frac{1}{\text{val}(P_\beta)}$. Moreover, if $x^*$ is an optimal solution for \hyperlink{primalbeta}{$(P_\beta)$} and $y^*$ is an optimal solution for \hyperlink{dualalpha}{$(D_\alpha)$}, then $(vx^*,vy^*)$ is a saddle point.
\end{theorem}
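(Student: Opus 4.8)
The plan is to put $V := \text{val}(P_\beta) = \text{val}(D_\alpha) > 0$ and to manufacture optimal game strategies by rescaling conic solutions to the unit level: the candidate saddle point is exactly $(vx^*, vy^*)$ with $v = 1/V$. I would prove the two one-sided bounds $\underline{v} \geq 1/V$ and $\overline{v} \leq 1/V$ separately; combined with the game's weak duality $\underline{v} \leq \overline{v}$ (relation \ref{eq:2.5}) these force $\underline{v} = \overline{v} = 1/V$, which is simultaneously the minimax equality and the value identity $v = 1/\text{val}(P_\beta)$. For the value identity alone one need not assume optima are attained: $\varepsilon$-optimal feasible points work, and $V > 0$ is what makes every rescaling below well defined.

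For the lower bound I take a primal feasible $x$ with $\langle \alpha, x\rangle = \lambda$ close to $V$ (equal to $V$ when an optimum $x^*$ exists) and set $\tilde{x} := x/\lambda$. The decisive point is that $\tilde{x}$ is a legitimate strategy: $\tilde{x} \in C$ since $C$ is a cone and $\lambda > 0$, and $\langle \alpha, \tilde{x}\rangle = 1 \in G$ by the normalization $G \subset [\gamma,1]$ (the top level being attained by Definition \ref{def:2.2}), so $\tilde{x} \in S$. Then for every $y \in T \subseteq K$, primal feasibility $Ax - \beta \in K^*$ gives $\langle y, Ax - \beta\rangle \geq 0$, while $F \subset [1,\delta]$ gives $\langle y, \beta\rangle \geq 1$; hence $u(\tilde{x},y) = \tfrac{1}{\lambda}\langle y, Ax\rangle \geq \tfrac{1}{\lambda}\langle y, \beta\rangle \geq 1/\lambda$. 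Letting $\lambda \to V^{+}$ yields $\underline{v} \geq 1/V$.

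The upper bound is the mirror image. Taking dual feasible $y$ with $\langle y, \beta\rangle = \mu$ close to $V$ and setting $\tilde{y} := y/\mu$, one checks $\tilde{y} \in K$ and $\langle \tilde{y}, \beta\rangle = 1 \in F$, so $\tilde{y} \in T$. For every $x \in S \subseteq C$, dual feasibility $-A^*y + \alpha \in C^*$ gives $\langle \alpha - A^*y, x\rangle \geq 0$, while $G \subset [\gamma,1]$ gives $\langle \alpha, x\rangle \leq 1$; hence $u(x,\tilde{y}) = \tfrac{1}{\mu}\langle A^*y, x\rangle \leq \tfrac{1}{\mu}\langle \alpha, x\rangle \leq 1/\mu$. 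Letting $\mu \to V^{-}$ (legitimate since $V > 0$) gives $\overline{v} \leq 1/V$, completing the value identity.

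For the saddle-point claim I specialize to genuine optima $x^* \in \mathcal{S}(P_\beta)$, $y^* \in \mathcal{S}(D_\alpha)$, so that $\lambda = \mu = V$ and the two estimates become $u(vx^*, y) \geq v$ for all $y \in T$ and $u(x, vy^*) \leq v$ for all $x \in S$. In particular $u(vx^*, vy^*) = v$, and combining the two inequalities yields $u(vx^*, y) \geq u(vx^*, vy^*) \geq u(x, vy^*)$, which is precisely the saddle-point condition \eqref{eq:2.6}. The step I expect to be the real crux is the membership verification $\tilde{x} \in S$ and $\tilde{y} \in T$: this is where the cone-leveled geometry and the unit-level normalization convert ``the conic program has optimal value $V$'' into ``the rescaled solution is a valid strategy on the level-one hyperplane,'' and it is the unique place where the one-sided orientation $G \subset [\gamma,1]$, $F \subset [1,\delta]$ is used — exactly the orientation that turns the primal estimate into a lower bound for the maximizer and the dual estimate into an upper bound for the minimizer.
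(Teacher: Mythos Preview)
Your proof is correct and rests on the same core device as the paper---rescaling conic feasible points to the unit level $\langle\alpha,\cdot\rangle=1$ (resp.\ $\langle\cdot,\beta\rangle=1$) to manufacture strategies---but your execution is more direct. The paper routes the argument through a chain of four auxiliary programs on each side, $(P1)$--$(P4)$ and $(D1)$--$(D4)$: it isolates the rescaling step as a standalone equivalence (Lemma~\ref{lem:3.2}) between the level-constrained program $(P2)$ and its inverse-value reformulation $(P3)$, and then separately passes from the constraint ``$\langle y,Ax'\rangle\geq 1$ for all $y\in T$'' to the conic constraint ``$Ax'-\beta\in K^*$'' via the inclusion $K^*\subset\{z:\langle y,z\rangle\geq 0\ \forall y\in T\}$ together with $\langle y,\beta\rangle\geq 1$. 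You collapse this entire chain into a single inequality, going straight from $(P_\beta)$-feasibility to the game bound $u(\tilde{x},y)\geq 1/\lambda$, and your $\varepsilon$-optimal treatment makes explicit that attainment is only needed for the saddle-point clause. The paper's modular decomposition pays off later (Lemma~\ref{lem:3.2} is reused verbatim in Section~\ref{sec:6} for intersections of cone-leveled sets), while your version is shorter and makes the role of the one-sided orientation $G\subset[\gamma,1]$, $F\subset[1,\delta]$ more transparent. Both arguments rely on the same implicit reading of Definition~\ref{def:2.2}, namely that the endpoint levels satisfy $1\in G$ and $1\in F$ so that $S_1\subset S$, $T_1\subset T$; the paper uses this when asserting that $(P1)$ is a relaxation of $(P2)$, and you use it when placing $\tilde{x}$ in $S$.
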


Before we come to the proof of Theorem \ref{th:3.1}, we show that the lower and upper values of a game that satisfies \hyperlink{f1}{(F1)}--\hyperlink{f3}{(F3)} can be expressed as the optimal values of two convex optimization problems. This technical result is standard when proving that strong duality implies the minimax equality in less general conic settings, such as in linear programming \cite[pp. 286]{dantzig2016linear} and semidefinite programming \cite[Lemmas 4.4, 4.5]{ickstadt2024semidefinite}.

\begin{lemma}\label{lem:securitylevels}
    Consider a two-player zero-sum game $G=(S,T,u)$ defined by \hyperlink{f1}{(F1)}-\hyperlink{f3}{(F3)}, and assume that $\underline{v}>0$. Further, consider the convex optimization programs \hypertarget{p1}{}
\begin{center}
\vspace*{-0.9cm}\begin{equation*}{(P1)\;\;\;\;\;\;\;\;}
\begin{array}{ll}
    \sup\limits_{\xi,x}\;\; \xi\\
     \textup{s.t.}\;\;\langle y,Ax\rangle\geq\xi\;\;\forall y\in T\\ \;\;\;\;\;\;\;\xi>0,\;x\in S
    \end{array}
\end{equation*}
\end{center}
\begin{center}\hypertarget{d1}{}
\vspace{-0.7cm}\begin{equation*}{\;(D1)\;\;\;\;\;\;\;\;\;}
\begin{array}{ll}
    \inf\limits_{\zeta,y}\;\; \zeta\\
     \textup{s.t.}\;\; \langle A^*y,x\rangle\leq\zeta\;\;\forall x\in S \\ \;\;\;\;\;\;\;\zeta>0,\;y\in T
    \end{array}
\end{equation*}
\end{center}
Then, $\underline{v}=\text{val}(P1)$ and $\overline{v}=\text{val}(D1)$. Moreover, if $\underline{v}=\overline{v}$, then $\mathcal{B}^{I}=\mathcal{S}(P1)$ and $\mathcal{B}^{II}=\mathcal{S}(D1)$.
\end{lemma}

\begin{proof}
We first show that $\text{val}(P1)=\underline{v}$. Define the set $\widetilde{S}:= \left\{x\in S:\inf\limits_{y\in T}\langle y,Ax\rangle >0\right\}$. This is non-empty because $\underline{v}>0$. Further, for every $x\in\widetilde{S}$, the pair $(\xi,x)$ with $\xi=\xi(x):= \inf\limits_{y\in T}\langle y,Ax\rangle$ represents a feasible solution for \hyperlink{p1}{$(P1)$}. Therefore, we get 
\begin{equation}\label{eq:9}    \underline{v}=\sup\limits_{x\in S}\inf\limits_{y\in T}\langle y,Ax\rangle=\sup\limits_{x\in \widetilde{S}}\inf\limits_{y\in T}\langle y,Ax\rangle=\text{val}(P1).
\end{equation}  
For the second equality, we define the set $\widetilde{T}:= \left\{y\in T:\sup\limits_{x\in S}\langle y,Ax\rangle>0\right\}$.Then, for every $y\in\widetilde{T}$, the pair $(\zeta,y)$ constitutes a feasible solution for \hyperlink{d1}{$(D1)$}, where $\zeta=\zeta(y):= \sup\limits_{x\in S}\langle y,Ax\rangle$. The bound $\overline{v}>0$ yields $\widetilde{T}=T$, and we therefore get the relation
 \begin{equation}\label{eq:10}
    \overline{v}=\inf\limits_{y\in T}\sup\limits_{x\in S}\langle y,Ax\rangle=\inf\limits_{y\in \widetilde{T}}\sup\limits_{x\in S}\langle y,Ax\rangle=\text{val}(D1).
 \end{equation}
Suppose now that $\underline{v}=\overline{v}$. Then, the value of the game $v$ exists. By definition and (\ref{eq:9}), $x^*\in S$ is an optimal strategy for player $I$ if and only if it is an optimal solution for \hyperlink{p1}{$(P1)$}. Similarly, (\ref{eq:10}) implies that $y^*\in T$ is an optimal strategy for player $II$ if and only if it is an optimal solution for \hyperlink{d1}{$(D1)$}.
\end{proof}

A natural step towards proving Theorem \ref{th:3.1} would be to show that \hyperlink{p1}{$(P1)$} and \hyperlink{d1}{$(D1)$} are dual to each other, and that their optimal values are equal. While it is relatively straightforward to show this duality in the linear programming case \cite[pp. 286, Theorem 1]{dantzig2016linear}, the latter task is not trivial in settings of arbitrary optimization complexity, not even in simple special cases. For instance, in the semidefinite setting the authors of \cite{ickstadt2024semidefinite} prove duality between \hyperlink{p1}{$(P1)$} and \hyperlink{d1}{$(D1)$} by constructing a specific block diagonal matrix and using the suitable structure of the linear operator $A$ (a tensor in that setup).\par

Despite the fact that we deal with a far more generic class of games where the payoff operator $A$ can have any linear structure, we are still able to guarantee a zero-duality gap between the programs of Lemma \ref{lem:securitylevels} by a series of reformulations, equivalences and reductions of convex programs, instead of involved constructions. To this end, we introduce a notion of ``equivalence" between two optimization problems:

\begin{definition}\label{def:equivalence}
    We say that two optimization problems $(OP1)$ and $(OP2)$ defined over spaces $X_1$ and $X_2$, respectively, are \textit{equivalent} if there exist functions $\varphi_1:X_1\to X_2$, $\varphi_2:X_2\to X_1$ and a bijection $f:L_1\to L_2$ for some non-empty $L_1,L_2\subseteq\mathbb{R}$ such that \vspace{-0.3cm}
    \begin{enumerate}[(1)]
        \item $\varphi_1\left(\mathcal{F}(OP1)\right)\subseteq\mathcal{F}(OP2)$,\vspace{-0.2cm}
        \item $\varphi_2\left(\mathcal{F}(OP2)\right)\subseteq\mathcal{F}(OP1)$, \vspace{-0.2cm}
        \item $\text{val}(OP2)=f\left(\text{val}(OP1)\right)$, whenever $\mathcal{F}(OP1)\neq\emptyset$ and $\text{val}(OP1)$ is finite.
    \end{enumerate}
\end{definition}

Observe that if $(OP1)$ and $(OP2)$ are equivalent, then $\mathcal{F}(OP1)\neq\emptyset$ if and only if $\mathcal{F}(OP2)\neq\emptyset$. This observation along with the bijective property of $f$ imply that Definition \ref{def:equivalence} is symmetric. We will refer to $\varphi_2$ as the \textit{inverse} of $\varphi_1$ (and vice-versa).\par

We now show that two certain convex programs are equivalent according to the above definition. Let $\alpha_1,\alpha_2,...,\alpha_m\in W$ and a convex cone $C\subseteq X$. We define the sets $\mathcal{D}_i:=  \left\{x'\in C:\langle \alpha_i,x'\rangle\leq 0\right\}$ for $i\in\{1,2,...,m\}$. In turn, for some index $i_0\in\{1,2,...,m\}$, we consider the programs
\hypertarget{phat}{}
\vspace*{-0.9cm}\begin{center}
\begin{equation*}{(\widehat{P}\hspace{0.05cm})\;\;\;\;\;\;\;\;}
\begin{array}{ll}
    \sup\limits_{\xi,x}\;\; \xi\\
    \hspace{0.05cm}\text{s.t.}\;\;\langle y,Ax\rangle\geq\xi\;\;\forall y\in T\\
\;\;\;\;\;\;\;\hspace{0.04cm}\langle\alpha_i,x\rangle=1\;\;\forall i\in\{1,2,...,m\}\\
    \;\;\;\;\;\;\;\;\xi>0, \;x\in C
    \end{array}
\end{equation*}
\end{center}
\vspace{-0.9 cm}\begin{center} \hypertarget{phati0}{}
\begin{equation*}{\;\;\;\;\;\;\;\;\;\;\;\;\;\;\;\;\;(\widehat{P}_{i_0})\;\;\;\;\;\;\;}
\begin{array}{ll}
    \inf\limits_{x'}\;\;\;\langle \alpha_{i_0},x'\rangle\\
    \hspace{0.05cm}\text{s.t.}\;\; \langle y,Ax'\rangle\geq 1\;\;\forall y\in T\\ \;\;\;\;\;\;\;\hspace{0.04cm}\langle\alpha_i,x'\rangle=\langle\alpha_j,x'\rangle\;\;\forall i\neq j \in \{1,2,...,m\}\\
    \;\;\;\;\;\;\;\;x'\in C\setminus \left(\displaystyle\bigcap_{i=1}^{m}\mathcal{D}_i\right)
    \end{array}\vspace{0.2cm}
\end{equation*}
\end{center}

\begin{lemma}\label{lem:3.2}
    The programs \hyperlink{phat}{$(\widehat{P}\hspace{0.05cm})$} and \hyperlink{phati0}{$(\widehat{P}_{i_0})$} are equivalent.
\end{lemma}

\begin{proof}
The programs \hyperlink{phat}{$(\widehat{P}\hspace{0.05cm})$}, \hyperlink{phati0}{$(\widehat{P}_{i_0})$} are defined over the spaces $(0,+\infty)\times X$ and $X$, respectively. If $(\xi,x)\in\mathcal{F}(\widehat{P}\hspace{0.05cm})$, then $\frac{x}{\xi}\in\mathcal{F}(\widehat{P}_{i_0})$. Conversely, if $x'\in\mathcal{F}(\widehat{P}_{i_0})$, then $\left(\frac{1}{\langle\alpha_{i_0},x'\rangle},\frac{x'}{\langle\alpha_{i_0},x'\rangle}\right)\in\mathcal{F}(\widehat{P}\hspace{0.05cm})$. Therefore, the functions $\varphi_1:(0,+\infty)\times X\to X$ with $\varphi_1(\xi,x)=\frac{x}{\xi}$ and $\varphi_2:X\to (0,+\infty)\times X$ with $\varphi_2(x')=\frac{1}{\langle \alpha_{i_0},x'\rangle}\left(1,x'\right)$ satisfy properties (1) and (2) of Definition \ref{def:equivalence}. In turn, the third property of Definition \ref{def:equivalence} is satisfied by the bijection $f:(0,+\infty)\to(0,+\infty)$ with $f(x)=1/x$. To see this, assume that \hyperlink{phat}{$(\widehat{P}\hspace{0.05cm})$} is feasible and that $\text{val}(\widehat{P}\hspace{0.05cm})<+\infty$. Let $x'$ feasible for \hyperlink{phati0}{$(\widehat{P}_{i_0})$} and take $(x,\xi)$ feasible for \hyperlink{phat}{$(\widehat{P}\hspace{0.05cm})$}, as given by the inverse of $\varphi_1$. Then $\xi\leq \text{val}(\widehat{P}\hspace{0.05cm})$. Since $x'$ was arbitrary, we derive $\frac{1}{\text{val}(\widehat{P}\hspace{0.05cm})}\leq \text{val}(\widehat{P}_{i_0})$. Similarly, if $(x,\xi)$ is feasible for \hyperlink{phat}{$(\widehat{P}\hspace{0.05cm})$}, then $x':=  \frac{x}{\xi}$ is feasible for \hyperlink{phati0}{$(\widehat{P}_{i_0})$} and the relation $\text{val}(\widehat{P}_{i_0})\leq\langle\alpha_{i_0},x'\rangle$ holds. The inequality $\frac{1}{\text{val}(\widehat{P}\hspace{0.05cm})}\geq \text{val}(\widehat{P}_{i_0})$ follows.
\end{proof}

It is next argued that cone-leveled sets are not more general than basic cone-leveled sets in games with a bilinear payoff function.

\begin{lemma}\label{lem:basic}
 Consider a two-player zero-sum game $G=(S,T,u)$ defined by \hyperlink{f1}{(F1)}-\hyperlink{f3}{(F3)}. Then $\underline{v}=\sup\limits_{x\in S_1}\inf\limits_{y\in T}\langle y,Ax\rangle$ and $\overline{v}=\inf\limits_{y\in T_1}\sup\limits_{x\in S}\langle y,Ax\rangle$.
\end{lemma}

\begin{proof}
    We only prove the first relation (the second is analogously proved). It suffices to show that all strategies of player $I$ that do not lie in the 1-hyperplane are dominated. To see this, let $x\in C$ with $\langle\alpha,x\rangle=\gamma'\in[\gamma,1)$. Then, $\hat{x}:= \frac{x}{\gamma'}\in S_1=\{x\in C:\langle \alpha,x\rangle=1\}$ and $\langle y,A\hat{x}\rangle=\frac{1}{\gamma'}\langle y,Ax\rangle>\langle y,Ax\rangle$ for every $y\in T$. Hence, the supremum of $\inf\limits_{y\in T}\langle A^*y,\cdot\rangle$ is attained over $S_1$.
\end{proof}

\begin{proof}[Proof of Theorem 1]
By Lemmas \ref{lem:securitylevels} and \ref{lem:basic}, \hyperlink{p1}{$(P1)$} has the same optimal value and optimal solutions with the program
\hypertarget{p2}{}
\begin{center}
\vspace{-0.9cm}\begin{equation*}{(P2)\;\;\;\;\;\;\;\;}
\begin{array}{ll}
    \sup\limits_{\xi,x}\;\; \xi\\
    \hspace{0.05cm}\text{s.t.}\;\;\langle y,Ax\rangle\geq\xi\;\;\forall y\in T\\
\;\;\;\;\;\;\;\hspace{0.04cm}\langle\alpha,x\rangle=1\\
    \;\;\;\;\;\;\;\;\xi>0, \;x\in C
    \end{array}
\end{equation*}
\end{center}
\vspace{-0.1cm}
In turn, by Lemma \ref{lem:3.2}, \hyperlink{p2}{$(P2)$} is equivalent to the problem
\begin{center} \hypertarget{p3}{}
\vspace{-0.7cm}\begin{equation*}{(P3)\;\;\;\;\;\;\;\;}
\begin{array}{ll}
    \inf\limits_{x'}\;\;\langle \alpha,x'\rangle\\
    \hspace{0.05cm}\text{s.t.}\; \langle y,Ax'\rangle\geq 1\;\;\forall y\in T\\ \;\;\;\;\;\;\;x'\in C\setminus\mathcal{D}
    \end{array}
\end{equation*}
\end{center}
where $\mathcal{D}:=  \{x'\in C:\langle \alpha,x'\rangle\leq 0\}$. Moreover, as $\text{val}(P2)=\underline{v}<+\infty$, we have $\text{val}(P3)=\frac{1}{\text{val}(P2)}$ if the former (or latter) is feasible. Observe that for every $y\in T$ we have $\langle y,\beta\rangle\geq 1$, and the inclusion $K^*\subset \{z\in Z:\langle y,z\rangle \geq 0\;\forall y\in T\}$ holds. Therefore, if we consider the restricted conic linear program
\begin{center}\hypertarget{p4}{}
\vspace{-0.7cm}\begin{equation*}{(P4)\;\;\;\;\;\;\;\;}
\begin{array}{ll}
    \inf\;\; \langle \alpha,x'\rangle\\
    \hspace{0.05cm}\text{s.t.}\;\; Ax'-\beta\in K^*\\ \;\;\;\;\;\;\;x'\in C\setminus\mathcal{D}
    \end{array}\;\;\;\;\;\;\;\;\;
\end{equation*}
\end{center}
then the feasibility of \hyperlink{p4}{$(P4)$} implies that of \hyperlink{p3}{$(P3)$}, and the inequality $\text{val}(P4)\geq \text{val}(P3)$ holds.\par
Consider now the program \hyperlink{d1}{$(D1)$}. By Lemmas \ref{lem:securitylevels}, \ref{lem:basic} we find $\overline{v}=\text{val}(D1)=\text{val}(D2)$, where \hypertarget{d2}{}
\begin{center}
\vspace{-0.7cm}\begin{equation*}{\;(D2)\;\;\;\;\;\;\;\;}
\begin{array}{ll}
    \inf\limits_{\zeta,y}\;\; \zeta\\
    \hspace{0.05cm}\text{s.t.}\;\;\langle A^*y,x\rangle\leq\zeta\;\;\forall x\in S\\
    \;\;\;\;\;\;\;\hspace{0.04cm}\langle y,\beta\rangle=1\\
    \;\;\;\;\;\;\;\;\zeta>0,\;y\in K
    \end{array}\;
\end{equation*}
\end{center} 

 Verbatim the equivalence shown between problems \hyperlink{p2}{$(P2)$} and \hyperlink{p3}{$(P3)$}, one can prove a dual counterpart of Lemma \ref{lem:3.2} to show that \hyperlink{d2}{$(D2)$} is equivalent to the program\hypertarget{d3}{}
\begin{center}
\vspace{-0.7cm}\begin{equation*}{\;(D3)\;\;\;\;\;\;\;\;\;}
\begin{array}{ll}
    \sup\limits_{y'}\;\;\langle y',\beta\rangle\\
    \hspace{0.05cm}\text{s.t.}\;\;\langle A^*y',x\rangle\leq 1\;\;\forall x\in S\\ \;\;\;\;\;\;\;\;y'\in K\setminus\mathcal{E}
    \end{array}
\end{equation*}
\end{center}
where $\mathcal{E}:=  \{y'\in K: \langle y',\beta\rangle\leq0\}$ (the statement and proof of such counterpart are omitted, as the latter follows directly from the one given for the lemma by reversing the inequality and switching ``$\sup$" and ``$\inf$").  Next, consider the program 

\begin{center}\hypertarget{d4}{}
\vspace{-0.7cm}\begin{equation*}{\;(D4)\;\;\;\;\;\;\;\;\;}
\begin{array}{ll}
    \sup\;\; \langle y',\beta\rangle\\
    \hspace{0.05cm}\text{s.t.}\;\; -A^*y'+\alpha\in C^*\\ \;\;\;\;\;\;\;\;y'\in K\setminus\mathcal{E}
    \end{array}\;\;\;\;\;
    \end{equation*}
\end{center}
The inclusion $S\subset C$ implies the inclusion $C^*\subset \{w\in W:\langle w,x\rangle\geq0\;\forall x\in S\}$. In addition, for every $x\in S$ the inequality $\langle\alpha,x\rangle\leq1$ holds. As a consequence, we have $\text{val}(D4)\leq \text{val}(D3)$, and the feasibility of \hyperlink{d4}{$(D4)$} implies that of \hyperlink{d3}{$(D3)$}.\par

 By assumption, a zero duality gap exists for the pair \hyperlink{newprimaldual}{$\{(P_\beta),(D_\alpha)\}$}, i.e., $\text{val}(P_\beta)=\text{val}(D_\alpha)>0$, and both are feasible. Thus, both \hyperlink{p4}{$(P4)$} and \hyperlink{d4}{$(D4)$} are feasible. In fact, $\text{val}(D_\alpha)=\text{val}(D4)$ and $\text{val}(P_\beta)=\text{val}(P4)$. Therefore, \hyperlink{p3}{$(P3)$} is also feasible, and by Lemma \ref{lem:3.2} the relation $\text{val}(P3)=\frac{1}{\text{val}(P2)}$ holds. The following inequality is immediately obtained:
\begin{equation}\label{eq:3.1}
    \text{val}(P_\beta)\geq\displaystyle\frac{1}{\underline{v}}\;.
\end{equation} 
Similarly, we get $\text{val}(D3)=\frac{1}{\text{val}(D2)}$, which yields the bound
\begin{equation}\label{eq:3.2}
    \text{val}(D_\alpha)\leq \displaystyle\frac{1}{\overline{v}}\; .
\end{equation}
It follows from (\ref{eq:3.1}) and (\ref{eq:3.2}) that $\underline{v}\geq\overline{v}$. By (\ref{eq:2.5}) we finally acquire $\underline{v}=\overline{v}=\frac{1}{\text{val}(P_\beta)}$.\par

 The existence of optimal strategies for players $I$ and $II$ eventually boils down to the existence of optimal solutions for \hyperlink{primalbeta}{$(P_\beta)$} and \hyperlink{dualalpha}{$(D_\alpha)$}, respectively. Let $x^*\in C$ be an optimal solution for \hyperlink{primalbeta}{$(P_\beta)$}. Then, $x^*$ is also an optimal solution for \hyperlink{p3}{$(P3)$} and
\begin{equation}\label{eq:3.3}
   v=\displaystyle\frac{1}{\text{val}(P3)}=\frac{1}{\langle \alpha,x^*\rangle}. 
\end{equation}
Define $\widetilde{x}:=  vx^* $. Then $\widetilde{x}$ is an optimal solution for \hyperlink{p2}{$(P2)$} by (\ref{eq:3.3}) and Lemma \ref{lem:3.2}. As $\widetilde{x}\in S_1\subset S$ and $\text{val}(P1)=\text{val}(P2)$, by Lemma \ref{lem:securitylevels} we deduce that $\widetilde{x}\in\mathcal{B}^{I}$, that is, $\widetilde{x}=vx^*$ is an optimal strategy for player $I$. Similarly, if $y^*$ is an optimal solution of \hyperlink{dualalpha}{$(D_\alpha)$}, then $\widetilde{y}:=  vy^*$ is an optimal strategy for player $II$. Consequently, $(\widetilde{x},\widetilde{y})$ is a saddle point.
\end{proof}

\vspace{-0.1cm}\subsection{Games with strategy sets that are bases of convex cones}\label{sec:3.2}\vspace{-0.2cm}

We now show that the minimax theorem is a direct consequence of strong duality in the case where the strategy sets of the players represent bases of convex cones. This is a special case of Theorem \ref{th:3.1}, but the positive-sign, feasibility and existence of a zero duality gap assumptions are no longer needed.\par
 From now on we assume that the spaces $X,\;Y$ are reflexive Banach spaces, paired with their dual (continuous) reflexive Banach spaces $X^*,\;Y^*$, respectively. We equip all spaces with their corresponding (w.r.t. the pair) strong topologies. Moreover, we assume that the convex cones $C,\;K$ are closed and that the cones $C^*,\;K^*$ are solid. In addition, for a game $G=(S,T,u)$, when the strategy sets $S,\;T$ are bases of the cones $C,\;K$ generated by $\alpha\in \text{int}C^*$, $\beta\in \text{int}K^*$ (and $H=Q=\{1\}$), respectively, we will denote it by $G=(\alpha,\beta,A)$, or simply $G_A$ when the normal vectors $\alpha,\;\beta$ are considered known and fixed; or when we want to make the difference between games with identical strategy sets but different payoff operators clear.

\vspace{0.3cm}\begin{theorem}[Minimax theorem for bases]\label{th:3.3}
    Consider a two-player zero-sum game $G=(\alpha,\beta,A)$ defined by \hyperlink{f1}{(F1)}-\hyperlink{f3}{(F3)} for functionals $\alpha\in \text{int}C^*,\;\beta\in\text{int}K^*$.    Then, $\underline{v}=\overline{v}$ and the set $\mathcal{B}^{I}\times\mathcal{B}^{II}$ of Nash equilibria is non-empty, convex and bounded.
\end{theorem}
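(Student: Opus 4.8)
The plan is to derive everything from the minimax theorem for cone-leveled sets (Theorem~\ref{th:3.1}) by showing that, for bases in a reflexive pairing, its three standing hypotheses---positivity of the value, consistency of \hyperlink{newprimaldual}{$\{(P_\beta),(D_\alpha)\}$}, and a zero duality gap---hold automatically. The first step is a reduction to the case $\underline v>0$. Since $S,T$ are bases we have $\langle\alpha,x\rangle=\langle y,\beta\rangle=1$ on $S\times T$, so a constant shift of the payoff remains bilinear: putting $Nx:=\langle\alpha,x\rangle\beta$ (a bounded operator $X\to Z$ with adjoint $N^*y=\langle y,\beta\rangle\alpha$) and $A_M:=A+MN$, one has $\langle y,A_Mx\rangle=\langle y,Ax\rangle+M$ throughout $S\times T$. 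The game $(\alpha,\beta,A_M)$ therefore keeps the same strategy sets and the same optimal strategies while its values become $\underline v+M$ and $\overline v+M$; as $\underline v$ is finite by \hyperlink{f3}{(F3)}, any $M>-\underline v$ lets me assume henceforth that $\underline v>0$, and it suffices to prove the theorem in this case.

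The crux is then to establish strict feasibility of both \hyperlink{primalbeta}{$(P_\beta)$} and \hyperlink{dualalpha}{$(D_\alpha)$}. For the dual this is free: $y=0\in K$ gives $-A^*y+\alpha=\alpha\in\text{int}C^*$, so $0$ is strictly feasible for \hyperlink{dualalpha}{$(D_\alpha)$}. For the primal I would exploit the value. Choosing $x_0\in S$ with $c_0:=\inf_{y\in T}\langle y,Ax_0\rangle>0$ (possible since $\underline v>0$), and using that $\beta\in\text{int}K^*=K^{*s}$ in a reflexive space \cite[Theorem~3.6]{casini2010cones}, every $y\in K\setminus\{0\}$ can be written as $\langle y,\beta\rangle\hat y$ with $\hat y\in T$. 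Hence for $t>1/c_0$ and $x:=tx_0\in C$, $\langle y,Ax-\beta\rangle=\langle y,\beta\rangle\bigl(t\langle\hat y,Ax_0\rangle-1\bigr)\geq\langle y,\beta\rangle(tc_0-1)>0$ for every $y\in K\setminus\{0\}$, which by the same identification $\text{int}K^*=K^{*s}$ means $Ax-\beta\in\text{int}K^*$; thus \hyperlink{primalbeta}{$(P_\beta)$} is strictly feasible.

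With both problems strictly feasible (so the pair is consistent), Proposition~\ref{prop:2.1} applied to \hyperlink{primalbeta}{$(P_\beta)$} yields $\text{val}(P_\beta)=\text{val}(D_\alpha)$ and $\mathcal S(D_\alpha)\neq\emptyset$, while applying it to \hyperlink{dualalpha}{$(D_\alpha)$}---legitimate because in the reflexive pairing the dual of \hyperlink{dualalpha}{$(D_\alpha)$} is again \hyperlink{primalbeta}{$(P_\beta)$}---gives $\mathcal S(P_\beta)\neq\emptyset$. A short scaling argument (writing any feasible $x$ as $\langle\alpha,x\rangle\hat x$ with $\hat x\in S$) shows $\langle\alpha,x\rangle\geq 1/\underline v$, so the common optimal value is positive. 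All hypotheses of Theorem~\ref{th:3.1} now hold, giving $\underline v=\overline v$; and its saddle-point construction turns $x^*\in\mathcal S(P_\beta)$ and $y^*\in\mathcal S(D_\alpha)$ into optimal strategies $vx^*\in\mathcal B^I$, $vy^*\in\mathcal B^{II}$, so both sets are non-empty. Undoing the shift preserves these conclusions. I expect the primal strict feasibility to be the main obstacle: it is exactly there that reflexivity is used twice---via the identification $\text{int}K^*=K^{*s}$, which converts the purely game-theoretic bound ``$\langle y,Ax_0\rangle\geq c_0$ on the base $T$'' into a Slater interior-point condition, and via the dualization of Proposition~\ref{prop:2.1} needed to produce an optimal solution of the primal.
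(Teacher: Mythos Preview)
Your argument is correct and follows the same architecture as the paper's proof: shift the payoff so that $\underline v>0$ (the paper does this via $B_\kappa=\lambda A+\kappa E$, you via $A_M=A+MN$), use $\text{int}K^*=K^{*s}$ in the reflexive setting (Lemma~\ref{lem:3.4}) to produce Slater points, apply Proposition~\ref{prop:2.1} and its dualized form, and then feed everything into Theorem~\ref{th:3.1}.

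The one genuine difference is efficiency. You observe that after \emph{one} shift making $\underline v>0$, both problems are strictly feasible: \hyperlink{dualalpha}{$(D_\alpha)$} trivially via $y=0$ (since $-A^*0+\alpha=\alpha\in\text{int}C^*$), and \hyperlink{primalbeta}{$(P_\beta)$} via your scaling $x=tx_0$ with $t>1/c_0$, which turns the game-theoretic bound $\inf_{y\in T}\langle y,Ax_0\rangle\geq c_0>0$ into $Ax-\beta\in K^{*s}=\text{int}K^*$. The paper instead uses two different shift parameters: a small $\kappa\in(1/2,1)$ to get strict feasibility of $(D_\alpha^\kappa)$ (via a small $\varepsilon y$ rather than your cleaner $y=0$) and hence $\mathcal S(P_\beta^\kappa)\neq\emptyset$, and then a separate large $\kappa'$ to make $(P_\beta^{\kappa'})$ strictly feasible and obtain $\mathcal S(D_\alpha^{\kappa'})\neq\emptyset$. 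Your single-shift route is shorter and makes the role of the hypothesis $\underline v>0$ more transparent; the paper's two-shift version avoids having to argue that a near-optimal $x_0$ exists but is otherwise doing the same work twice.
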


 For its proof, we will need some additional Lemmas. The first one is immediately derived from a characterization of reflexivity of Banach spaces and the fact that $\text{int} C^*=\text{int} C^{*s}$:
\begin{lemma}[\normalfont{Theorem 3.6  \cite{casini2010cones}}]\label{lem:3.4}
    Let $X$ be a reflexive Banach space and $C$ be a closed convex cone in $X$. If $\text{int} C^*\neq\emptyset$, then $\text{int} C^*=C^{*s}$.
\end{lemma}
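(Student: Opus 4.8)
The plan is to prove the two inclusions $\text{int}\,C^* \subseteq C^{*s}$ and $C^{*s} \subseteq \text{int}\,C^*$ separately; the first is elementary and holds in any Banach space, while the second is where reflexivity and the standing hypothesis $\text{int}\,C^*\neq\emptyset$ genuinely enter. The central device I would establish first is the coercivity characterization of the interior of the dual cone:
\[
w \in \text{int}\,C^* \iff \exists\,\delta>0 \text{ such that } \langle w,x\rangle \geq \delta\|x\| \text{ for all } x\in C .
\]
For ``$\Leftarrow$'', if $\langle w,x\rangle\geq\delta\|x\|$ on $C$ and $\|w'-w\|<\delta$, then $\langle w',x\rangle \geq (\delta-\|w'-w\|)\|x\|\geq 0$ for every $x\in C$, so the ball $B(w,\delta)$ lies in $C^*$. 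For ``$\Rightarrow$'', if the closed ball $\overline{B}(w,\delta)\subseteq C^*$ and $x\in C\setminus\{0_X\}$, I would pick by Hahn--Banach a norming functional $v$ with $\|v\|=1$ and $\langle v,x\rangle=\|x\|$; then $w-\delta v\in C^*$ gives $\langle w,x\rangle\geq\delta\|x\|$. Neither direction uses reflexivity or closedness, and the characterization immediately yields $\text{int}\,C^*\subseteq C^{*s}$, since $\delta\|x\|>0$ whenever $x\neq 0_X$.

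For the reverse (and harder) inclusion, fix $w\in C^{*s}$; by the characterization it suffices to show that $m:=\inf\{\langle w,x\rangle : x\in C,\ \|x\|=1\}>0$. Strict positivity gives $m\geq 0$, so I would argue by contradiction: suppose $m=0$ and choose $x_n\in C$ with $\|x_n\|=1$ and $\langle w,x_n\rangle\to 0$. Here reflexivity of $X$ supplies, via the characterization of reflexivity through weak compactness of the closed unit ball, a subsequence with $x_{n_k}\rightharpoonup x_0$ and $\|x_0\|\leq 1$; since $C$ is closed and convex it is weakly closed (Mazur), so $x_0\in C$, and weak convergence tested against $w\in X^*$ gives $\langle w,x_0\rangle=0$.

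The main obstacle --- and the exact point where the hypothesis $\text{int}\,C^*\neq\emptyset$ becomes indispensable --- is that a weak limit of unit vectors may collapse to $0_X$, in which case $\langle w,x_0\rangle=0$ is no contradiction. I would resolve this by fixing some $w_0\in\text{int}\,C^*$: the coercivity characterization furnishes $\delta_0>0$ with $\langle w_0,x\rangle\geq\delta_0$ for every unit-norm $x\in C$, hence $\langle w_0,x_{n_k}\rangle\geq\delta_0$, and passing to the weak limit gives $\langle w_0,x_0\rangle\geq\delta_0>0$. Thus $x_0\neq 0_X$, so $x_0\in C\setminus\{0_X\}$ with $\langle w,x_0\rangle=0$ contradicts $w\in C^{*s}$. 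Therefore $m>0$ and $w\in\text{int}\,C^*$, which together with the first inclusion establishes $\text{int}\,C^*=C^{*s}$.
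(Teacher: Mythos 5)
Your proof is correct, and it is worth noting that it does more than the paper itself does: the paper states this lemma as a quotation of Casini and Miglierina \cite[Theorem 3.6]{casini2010cones} and offers no argument beyond the remark that it follows ``from a characterization of reflexivity for Banach spaces and the fact that $\text{int}\,C^*=\text{int}\,C^{*s}$.'' So where the paper (and its source) routes the result through reflexivity characterizations in the style of James' theorem, you give a genuinely self-contained and more elementary argument: the coercivity characterization of $\text{int}\,C^*$ via Hahn--Banach norming functionals (valid in any normed space, for any cone), followed by a compactness argument for the hard inclusion $C^{*s}\subseteq\text{int}\,C^*$ that uses exactly the three hypotheses in exactly the places they are needed --- reflexivity to extract a weakly convergent subsequence of unit vectors, closedness of $C$ (via Mazur) to keep the weak limit in $C$, and $\text{int}\,C^*\neq\emptyset$ to rule out collapse of the weak limit to $0_X$, which is indeed the only way the contradiction could evaporate. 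That last device (testing the sequence against a fixed $w_0\in\text{int}\,C^*$ whose coercivity bound survives the weak limit) is the real content of the reverse inclusion, and you have it right. One small point of rigor: weak compactness of the closed unit ball (Kakutani) gives subnets, not subsequences; to extract a weakly convergent \emph{subsequence} you should invoke the Eberlein--\v{S}mulian theorem, or equivalently pass to the separable closed linear span of $\{x_n\}$, which is again reflexive and has weakly metrizable ball. This is a standard citation, not a gap. The trade-off between the two approaches: the paper's citation is shorter and connects the lemma to the broader theory of cone bases that motivates \cite{casini2010cones}, while your argument makes the lemma verifiable within the paper using only textbook functional analysis and exposes the role of each hypothesis.
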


\begin{lemma}\label{lem:3.5}
    Consider two closed convex cones $C\subseteq X$, $K\subseteq Y$, a linear operator $A:X\to Y^*$, a functional $\beta \in \text{int}K^*$, and some $r\in\mathbb{R}$. Further, consider the base $T:= \{y\in K:\langle y,\beta\rangle=1\}$. Then, for any $x\in C$ the following equivalence holds:
    \begin{equation*}
        \langle y,Ax\rangle\geq r\;\;\forall y\in T \;\;\Longleftrightarrow\;\;\langle y,Ax\rangle\geq r\langle y,\beta\rangle\;\;\forall y\in K.
    \end{equation*}
\end{lemma}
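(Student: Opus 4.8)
The plan is to prove the two implications separately. The reverse implication is immediate, while the forward one requires a normalization argument supplemented by the strict positivity of $\beta$ that the reflexive setting guarantees.

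The direction $(\Leftarrow)$ is trivial: if $\langle y,Ax\rangle\geq r\langle y,\beta\rangle$ holds for every $y\in K$, then restricting to $y\in T\subset K$, where $\langle y,\beta\rangle=1$ by the definition of the base, yields $\langle y,Ax\rangle\geq r$ at once. For the direction $(\Rightarrow)$, I fix $x\in C$ satisfying $\langle y,Ax\rangle\geq r$ for all $y\in T$, and let $y\in K$ be arbitrary. Since $y\in K$ and $\beta\in K^*$, we always have $\langle y,\beta\rangle\geq 0$, so only two cases can occur. If $\langle y,\beta\rangle>0$, I set $y':=y/\langle y,\beta\rangle$; then $y'\in K$ because $K$ is a cone, and $\langle y',\beta\rangle=1$, so $y'\in T$. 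Applying the hypothesis to $y'$ gives $\langle y,Ax\rangle/\langle y,\beta\rangle=\langle y',Ax\rangle\geq r$, and multiplying through by $\langle y,\beta\rangle>0$ produces the desired $\langle y,Ax\rangle\geq r\langle y,\beta\rangle$.

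The remaining case $\langle y,\beta\rangle=0$ is the only place where a genuine subtlety arises, since the normalization above is then unavailable. I resolve it by invoking strict positivity: because $\beta\in\text{int}K^*$ and $Y$ is a reflexive Banach space with $K$ closed and convex, Lemma \ref{lem:3.4} yields $\beta\in K^{*s}$, i.e., $\langle y,\beta\rangle>0$ for every $y\in K\setminus\{0_Y\}$. Hence $\langle y,\beta\rangle=0$ together with $y\in K$ forces $y=0_Y$, and the inequality collapses to the trivial $0\geq 0$. Combining the three observations, the forward implication holds for every $y\in K$, completing the equivalence. Thus the entire content of the lemma beyond bookkeeping is the handling of this boundary case, and the key that unlocks it is precisely that a base-generating functional interior to $K^*$ is strictly positive on $K$, so the troublesome level set $\{y\in K:\langle y,\beta\rangle=0\}$ reduces to $\{0_Y\}$.
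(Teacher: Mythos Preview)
Your proof is correct and follows essentially the same approach as the paper: both invoke Lemma~\ref{lem:3.4} to conclude that $\beta\in\text{int}K^*$ is strictly positive on $K\setminus\{0_Y\}$, which reduces $T$ to the set of normalizations $y/\langle y,\beta\rangle$ and makes the equivalence immediate. The paper compresses the case analysis into the single observation $T=\{y/\langle y,\beta\rangle:y\in K\setminus\{0\}\}$, but the content is the same.
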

\begin{proof}
    Since $\beta$ belongs to $\text{int} K^*$, and $Y^*$ is a reflexive Banach space, by Lemma \ref{lem:3.4} we have that $\langle y,\beta\rangle >0$ for every $y\in K\setminus\{0\}$. This implies that $T=\left\{\frac{y}{\langle y,\beta\rangle}: y\in K\setminus\{0\}\right\}$. The result directly follows.
\end{proof}

Next, we show that the set of Nash equilibria is invariant under positive translations of the linear payoff operator for every game with strategy sets that are bases of convex cones.

\begin{lemma}\label{lem:3.6}
     Let $\alpha\in \text{int}C^*$, $\beta\in \text{int}K^*$, and a linear operator $A:X\rightarrow Y^*$. Consider the linear operators $E:X\rightarrow Y^*$ with $E(x):= \beta\langle\alpha,x\rangle$, and $B:X\rightarrow Y^*$ with $B(x):=\lambda A(x)+\kappa E(x)$ for some fixed $\lambda,\kappa >0$. Also, let $v_A$ be the game value for $G_A=(\alpha,\beta,A)$ and $v_B$ be the corresponding one for $G_B=(\alpha,\beta,B)$, where both games are defined by \hyperlink{f1}{(F1)}-\hyperlink{f3}{(F3)}. Then, $v_A$ exists if and only if $v_B$ exists, and in that case $v_B=\lambda v_A+\kappa$. Moreover, $(x^*,y^*)$ is a Nash equilibrium for $G_A$ if and only if it is a Nash equilibrium for $G_B$.
\end{lemma}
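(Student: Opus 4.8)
The plan is to reduce the whole statement to a single pointwise identity between the two payoff functions on the strategy sets, which are common to both games. Write $f_A(x,y) := \langle y, Ax\rangle$ and $f_B(x,y) := \langle y, Bx\rangle$ for the payoffs of $G=(\alpha,\beta,A)$ and $G=(\alpha,\beta,B)$, and note that both games are played over the same bases $S = \{x\in C:\langle\alpha,x\rangle = 1\}$ and $T = \{y\in K:\langle y,\beta\rangle = 1\}$, since these depend only on $(\alpha,\beta,C,K)$ and not on the operator. The first step is to evaluate the contribution of $E$ on $S\times T$: by bilinearity of the pairing, $\langle y, E(x)\rangle = \langle y, \beta\rangle\,\langle\alpha, x\rangle$, which equals $1$ for every $x\in S$ and $y\in T$. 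Hence, for all such $x,y$,
\begin{equation*}
f_B(x,y) = \lambda\langle y, Ax\rangle + \kappa\langle y, E(x)\rangle = \lambda\,f_A(x,y) + \kappa,
\end{equation*}
so that $f_B = \phi\circ f_A$ on $S\times T$, where $\phi(t) := \lambda t + \kappa$ is a strictly increasing affine map (recall $\lambda>0$). This identity is the crux of the lemma; everything else is a formal consequence of the monotonicity of $\phi$.

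Next I would transfer $\phi$ through the iterated extrema defining the security levels. Because $\lambda>0$, the map $\phi$ commutes with both $\inf$ and $\sup$ over any non-empty set, so
\begin{equation*}
\underline{v}_B = \sup_{x\in S}\inf_{y\in T}f_B(x,y) = \lambda\sup_{x\in S}\inf_{y\in T}f_A(x,y) + \kappa = \lambda\,\underline{v}_A + \kappa,
\end{equation*}
and identically $\overline{v}_B = \lambda\,\overline{v}_A + \kappa$, where $\underline{v}_A,\overline{v}_A$ and $\underline{v}_B,\overline{v}_B$ denote the lower and upper values of the two games. Since $\lambda>0$, the equality $\underline{v}_B=\overline{v}_B$ is equivalent to $\underline{v}_A=\overline{v}_A$; this yields the asserted equivalence of the existence of the two game values, and when they exist it gives $u = \phi(v) = \lambda v + \kappa$. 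Finiteness of all the quantities involved is guaranteed by $\hyperlink{f3}{(F3)}$, which legitimizes the affine arithmetic.

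For the equilibria, I would apply $\phi$ directly to the saddle-point inequalities. If $(x^*,y^*)$ satisfies $f_A(x^*,y)\geq f_A(x^*,y^*)\geq f_A(x,y^*)$ for all $x\in S$, $y\in T$, then applying the order-preserving map $\phi$ to each inequality gives $f_B(x^*,y)\geq f_B(x^*,y^*)\geq f_B(x,y^*)$; the converse follows identically using that $\phi$ is a bijection with increasing inverse $\phi^{-1}(s)=(s-\kappa)/\lambda$. Thus $(x^*,y^*)$ is a Nash equilibrium for $G=(\alpha,\beta,A)$ if and only if it is one for $G=(\alpha,\beta,B)$.

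There is no genuine analytic difficulty in this lemma, so the main obstacle is really only one of bookkeeping: the single point that must be verified carefully is that $E$ degenerates to the constant pairing $1$ on $S\times T$, which is exactly where the base structure of $S$ and $T$ (and hence $\alpha\in\text{int}C^*$, $\beta\in\text{int}K^*$) enters, and that the sign hypothesis $\lambda>0$ is what makes $\phi$ monotone --- without it neither the extrema nor the saddle-point inequalities would be preserved. The constant $\kappa>0$ merely fixes the additive shift and plays no further role beyond the arithmetic.
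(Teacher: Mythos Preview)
Your proof is correct and follows essentially the same approach as the paper: both arguments hinge on the identity $\langle y,Bx\rangle=\lambda\langle y,Ax\rangle+\kappa$ on $S\times T$ (since $\langle y,\beta\rangle\langle\alpha,x\rangle=1$ there), then push the affine map with positive slope through the sup--inf and the saddle-point inequalities. Your framing via the monotone bijection $\phi$ is slightly more explicit, but the content is identical.
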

\begin{proof}
As the strategy sets are bases of convex cones, the following relation holds:
\begin{equation}\label{eq:B}
    \langle y,Bx\rangle=\lambda\langle y,Ax\rangle +\kappa\langle y,\beta\rangle\langle \alpha,x\rangle=\lambda\langle y,Ax\rangle+\kappa\;\;\forall x\in S,\;\;\forall y\in T.
\end{equation}
Let $\underline{v}_A$ be the lower value for $G_A$ and $\underline{v}_B$ be the corresponding one for $G_B$. Then, (\ref{eq:B}) yields
\begin{equation*}
    \underline{v}_B=\sup_{x\in S}\inf_{y\in T}\langle y,Bx\rangle=\lambda\sup_{x\in S}\inf_{y\in T}\langle y,Ax\rangle +\kappa =\lambda\underline{v}_A+\kappa.
\end{equation*}
Similarly, if $\overline{v}_A,\;\overline{v}_B$ are the upper values for $G_A$ and $G_B$, respectively, then $\overline{v}_B=\lambda\overline{v}_A+\kappa$. Therefore, $v_B$ exists if and only if $v_A$ exists, and the relation $v_B=\lambda v_A+\kappa$ follows.\par
For the second part, let $(x^*,y^*)$ be a Nash equilibrium for $G_A$. Then, by (\ref{eq:2.6}) and (\ref{eq:B}) we equivalently have
\begin{equation*}
    \langle y,Ax^*\rangle\geq\langle y^*,Ax^*\rangle\geq\langle y^*,Ax\rangle\;\;\forall x\in S,\;\;\forall y\in T\;\Leftrightarrow
\end{equation*}
\begin{equation*}
    \lambda\langle y,Ax^*\rangle+\kappa\geq\lambda\langle y^*,Ax^*\rangle+\kappa\geq\lambda\langle y^*,Ax\rangle+\kappa\;\;\forall x\in S,\;\;\forall y\in T\;\Leftrightarrow
\end{equation*}
\begin{equation*}
    \langle y,Bx^*\rangle\geq\langle y^*,Bx^*\rangle\geq\langle y^*,Bx\rangle\;\;\forall x\in S,\;\;\forall y\in T,
\end{equation*}
which is equivalent to $(x^*,y^*)$ being a Nash equilibrium for $G_B$.
\end{proof}

\begin{proof}[\textit{Proof of Theorem 2}] Let $\lambda>0$ such that $-1/2\leq\lambda\underline{v}\leq\lambda\overline{v}\leq 1/2$. Consider the linear operator $B_\kappa:=  \lambda A +\kappa E$, where $E$ is as in Lemma \ref{lem:3.6}, and the game $G_{B_\kappa}=(\alpha,\beta,B_\kappa)$, where $\kappa$ ranges in $(1/2,+\infty)$. Its adjoint operator is given by $B^*_\kappa(y)=\lambda A^*y+\kappa\alpha\langle y,\beta\rangle$ for $y\in Y$, as it satisfies the relation $\langle y,B_\kappa (x)\rangle=\langle B^*_\kappa (y),x\rangle$ for all $x\in X$. Moreover, if $\underline{v_\kappa}$ is the lower value of the game $G_{B_\kappa}$, then $\underline{v_\kappa}>0$ for every $\kappa\in(1/2,+\infty)$ due to Lemma \ref{lem:3.6}.\par

In turn, consider the problems \hyperlink{p1}{$(P1)$}--\hyperlink{p4}{$(P4)$} and \hyperlink{d1}{$(D1)$}--\hyperlink{d4}{$(D4)$} as in Lemma \ref{lem:securitylevels} and Theorem \ref{th:3.1}, but replace $A$ with the payoff operator $B_\kappa$. We denote these programs by $(P^\kappa 1)$--$(P^\kappa 4)$ and $(D^\kappa 1)$--$(D^\kappa 4)$, to make the distinction clear. The program $(P^\kappa 1)$ is feasible and it is exactly the same program as $(P^\kappa 2)$ by definition of $S$. Hence, $(P^\kappa 1)$ is equivalent to $(P^\kappa 3)$ by Lemma \ref{lem:3.2}. In turn, by Lemma \ref{lem:3.5}, $(P^\kappa 3)$ and $(P^\kappa 4)$ are the same conic programs. Thus, $\text{val}(P^\kappa 4)=\text{val}(P^\kappa 1)=1/{\underline{v_\kappa}}$, with the last equality resulting from Lemma \ref{lem:securitylevels}. Similarly, $(D^\kappa 1)$ is feasible (by \hyperlink{f3}{(F3)}), it is equivalent to $(D^\kappa 4)$, and $1/\overline{v_\kappa}=\text{val}(D^\kappa 1)=\text{val}(D^\kappa 4)$ by Lemma \ref{lem:securitylevels}.\par
 Consider the primal-dual pair $\{(P_\beta^\kappa),(D_\alpha^\kappa)\}$, which is identical to the pair \hyperlink{newprimaldual}{$\{(P_\beta),(D_\alpha)\}$}, but $A$ is replaced by $B_\kappa$. Then $\text{val}(P_\beta^\kappa)\geq \text{val}(D_\alpha^\kappa)\geq \text{val}(D^\kappa 4)>0$. Moreover, both $(P_\beta^\kappa)$ and $(D_\alpha^\kappa)$ are feasible, and the equalities $\text{val}(P_\beta^\kappa)=\text{val}(P^\kappa 4)$ and $\text{val}(D_\alpha^\kappa)=\text{val}(D^\kappa 4)$ hold. Indeed, the former is straightforward by the fact that $\text{val}(P_\beta^\kappa)>0$ ($\mathcal{D}=\{0\}$ and $x=0$ is not feasible due to Lemma \ref{lem:3.4}), while the latter occurs from the fact that $(D^\kappa 4)$ is feasible and $\text{val}(D^\kappa_\alpha)>0$. Consequently, $\text{val}(P^{\kappa}_\beta)=1/\underline{v_\kappa}$ and $\text{val}(D^{\kappa}_{\alpha})=1/\overline{v_\kappa}$. Now, note that $(P_\beta^\kappa)$ and $(D_\alpha^\kappa)$ take the following forms:
\begin{center}\hypertarget{primalbetakappa}{}
\vspace{-0.7cm}\begin{equation*}{(P_\beta^\kappa)\;\;\;\;\;\;\;\;}
\begin{array}{ll}
    \inf\limits_{x}\;\; \langle \alpha,x\rangle\\
    \hspace{0.05cm}\text{s.t.}\;\; \lambda Ax+\beta(\kappa\langle\alpha,x\rangle-1)\in K^*\\ \;\;\;\;\;\;\;x\in C
    \end{array}
\end{equation*}
\end{center}
\begin{center}\hypertarget{dualalphakappa}{}
\vspace{-0.7cm}\begin{equation*}{\;\;\;\;\;\;(D_\alpha^\kappa)\;\;\;\;\;\;\;\;}
\begin{array}{ll}
    \sup\limits_{y}\;\; \langle y,\beta\rangle\\
    \hspace{0.05cm}\text{s.t.}\;\; -\lambda A^*y+\alpha(1-\kappa\langle y,\beta\rangle)\in C^*\\ \;\;\;\;\;\;\;\;\hspace{0.03cm}y\in K
    \end{array}
    \end{equation*}
\end{center}
Fix $\kappa\in(1/2,1)$. Then, for $y\in T$, one can find suitable $\varepsilon>0$ such that $\varepsilon y\in K\setminus\{0\}$ is a strictly feasible solution for $(D_\alpha^\kappa)$. Indeed, since $\alpha\in\text{int}C^*$, for any $y\in T$ there exists $\delta=\delta(A^*, y,\lambda,\kappa)\in(0,1)$ sufficiently small such that $-\delta A^*\left(\frac{\lambda y}{1-\kappa}\right)+\alpha\in\text{int}C^*$. Then, for $\varepsilon:= \frac{\delta}{1-\kappa+\kappa\delta}$ we have $-\lambda A^*(\varepsilon y)+\alpha(1-\varepsilon\kappa)\in \text{int}C^*$. From the dual counterpart of Proposition \ref{prop:2.1} (see Remark \ref{rem:reflexive}) we deduce that $\text{val}(D_\alpha^\kappa)=\text{val}(P_\beta^\kappa)$ and that the set $\mathcal{S}(P_\beta^\kappa)$ is non-empty, bounded and convex. Hence, $\underline{v_\kappa}=\overline{v^\kappa}=\frac{1}{\text{val}(P^\kappa_\beta)}=v_{\kappa}$. Lemma \ref{lem:3.6} finally gives $\underline{v}=\overline{v}$.\par

It remains to show that the set of Nash equilibria is non-empty, convex and bounded. To this end, observe that $x^*$ is an optimal solution for $(P^\kappa_\beta)$ if and only if it is an optimal solution for $(P^\kappa 3)$. In turn, $x^*$ is feasible for $(P^\kappa 3)$ if and only if $x^*v_\kappa$ is feasible for $(P^\kappa 2)$, and the optimal value of each program is attained for that feasible point (Lemma \ref{lem:3.2}). It is easy to see that $(P^\kappa 2)$ and $(P^\kappa 1)$ share the same set of optimal solutions. Therefore, since every optimal solution of $(P^\kappa 1)$ is an optimal strategy for player $I$ in $G_{B_\kappa}$ and vice versa (due to Lemma \ref{lem:securitylevels}), we get $\mathcal{B}^I=v_\kappa \mathcal{S}(P^\kappa_\beta)$ from Lemma \ref{lem:3.6}. As we've already shown that the set $\mathcal{S}(P^\kappa_\beta)$ has the desired properties, $\mathcal{B}^{I}$ is also non-empty, bounded and convex due to $v_\kappa>0$.\par

To show that $\mathcal{B}^{II}$ has the same properties, as previously, one can fix $\kappa'$ large enough such that $(P_\beta^{\kappa'})$ is strictly feasible. Then, again by Proposition \ref{prop:2.1}, Lemmas \ref{lem:securitylevels}, \ref{lem:3.2} and \ref{lem:3.6}, the same analysis -- now for player $II$ of the game $G_{B_{\kappa'}}$ -- gives $\underline{v}=\overline{v}$ and $\mathcal{B}^{II}=v_{\kappa'}\mathcal{S}(D^{\kappa'}_\alpha)$, where $v_{\kappa'}$ is the game value of $G_{B_{\kappa'}}$ and $\mathcal{S}(D^{\kappa'}_\alpha)$ is non-empty, convex and bounded. As $v_{\kappa'}>0$, the set of optimal strategies $\mathcal{B}^{II}$ is also non-empty, convex and bounded, and the result follows.
\end{proof}
\begin{remark}
    As seen by the proof, for a suitable ``translation" of $G_A$ (choice of $\kappa,\kappa'$) one can find an explicit representation for both the game value and the set of Nash equilibria of the initial game, which depends merely on the solutions of certain conic programs.
\end{remark}

\vspace{-0.1cm}\subsection{Extensions to games with homogeneous payoff functions}\label{sec:3.3}\vspace{-0.2cm}

  With some slight modifications in its proof, the first main result, Theorem \ref{th:3.1}, can be extended to zero-sum games with positive-homogeneous (with respect to each variable) payoff functions, i.e., to payoff functions $u:X\times Y\to\mathbb{R}$ with the property $u(\lambda x, y)=\lambda^k u(x,y)$ and $u(x,\lambda y)=\lambda^l u(x,y)$ for some integers $k,l$ and every $\lambda>0$. To be more precise, note that the positive homogeneity of $u(x,y)=\langle y,Ax\rangle$ was used to show equivalence between programs \hyperlink{p2}{$(P2)$} and \hyperlink{p3}{$(P3)$} in Lemma \ref{lem:3.2}, while linearity was used to restrict problem \hyperlink{p3}{$(P3)$} into the necessary conic form that is characterized by the appearance of the dual convex cone $K^*$, namely program \hyperlink{primalbeta}{$(P_\beta)$}. In fact, bilinearity of the payoff function is not needed, and homogeneity is the only sufficient property for any two-player zero-sum game to be described by a pair of conic convex programs of the form \hyperlink{p2}{$(P2)$}--\hyperlink{p3}{$(P3)$} and \hyperlink{d2}{$(D2)$}--\hyperlink{d3}{$(D3)$}. The cost of such minimal requirement is that we ``lose" the duality of the convex cones $C$ and $K$, that is, the cones $C^*$ and $K^*$ no longer appear, thus many certificates (e.g., Proposition \ref{prop:2.1}) for the existence of a zero duality gap and optimal solutions cannot be invoked. However, given that such a gap between \hyperlink{p1}{$(P1)$} and \hyperlink{d1}{$(D1)$} is absent (for instance, via invoking Lemma \ref{lem:securitylevels} and known fixed-point theorems \cite{fan1952fixed, glicksberg1952further}), one can then apply suitable algorithms (e.g., interior-point, trust-region, Newton methods) to the corresponding positive-homogeneous conic versions of \hyperlink{p2}{$(P2)$}--\hyperlink{p3}{$(P3)$} and \hyperlink{d2}{$(D2)$}--\hyperlink{d3}{$(D3)$}, which are oftentimes much more efficient in practice than fixed-point and saddle-point ones in many conic optimization settings, such as in quadratic and semidefinite programming \cite{nesterov1994interior, nocedal1999numerical, wolkowicz2012handbook}.

\vspace{-0.1cm}\section{The Minimax Theorem proves Strong Duality almost always}\label{sec:4}\vspace{-0.2cm}

We now show that the converse is ``almost always" true when the strategy spaces are bases of convex cones, namely, we prove that minimax is \textit{almost equivalent} to strong duality for the pair \hyperlink{primaldual}{$\{(P),(D)\}$}. In what follows we still assume that the spaces $X,\;Y$ are reflexive Banach spaces, paired with their dual reflexive Banach spaces $X^*,\;Y^*$, respectively, and that the cones $C^*,\;K^*$ are solid. Further, for every $c\in X^*$, $b\in Y^*$, we define the null sets (orthogonal complements)
\begin{subequations}
\begin{align}
c^{\perp}&:= \{x\in X:\langle c,x\rangle=0\},\\ 
b^{\perp}&:= \{y\in Y: \langle y,b\rangle=0\}.
\end{align}
\end{subequations}

\begin{theorem}\label{th:4.1}
    Consider a linear operator $A:X\rightarrow Y^*$, closed convex cones $C\subseteq X,\;K\subseteq Y$ and functionals $c\in X^*,\;b\in Y^*$ such that the primal-dual pair \hyperlink{primaldual}{$\{(P),(D)\}$} is feasible. Also, consider the game $G=(\alpha,\beta,A)$ for some $\alpha\in\text{int} C^*,\;\beta\in\text{int}K^*$, and let $v$ be its game value. The following holds:\vspace{-0.2cm}
    \begin{enumerate}[(i)]
        \item If $v\neq0$, then at least one of \hyperlink{primal}{$(P)$}, \hyperlink{dual}{$(D)$} is strictly feasible. In fact, $\text{val}(P)=\text{val}(D)$ and at least one of the sets $\mathcal{S}(P),\;\mathcal{S}(D)$ is non-empty, convex, and bounded.\vspace{-0.1cm}
        \item If $v=0$ and either $\mathcal{B}^{I}\cap c^{\perp}=\emptyset$ or $\mathcal{B}^{II}\cap b^{\perp}=\emptyset$, then at least one of \hyperlink{primal}{$(P)$}, \hyperlink{dual}{$(D)$} is strictly feasible. In fact, $\text{val}(P)=\text{val}(D)$ and at least one of the sets $\mathcal{S}(P),\;\mathcal{S}(D)$ is non-empty, convex, and bounded.\vspace{-0.1cm}
        \item If $v=0$, $\mathcal{B}^{I}\cap c^{\perp}\neq\emptyset$ and $\mathcal{B}^{II}\cap b^{\perp}\neq\emptyset$, then neither of the problems \hyperlink{primal}{$(P)$}, \hyperlink{dual}{$(D)$} is strictly feasible. In addition, if $\mathcal{B}^{I}\cap c^{\perp}\cap\mathcal{F}(P)\neq\emptyset$ and $\mathcal{B}^{II}\cap  b^{\perp}\cap\mathcal{F}(D)\neq\emptyset$, then $\text{val}(P)=\text{val}(D)=0$ and $\mathcal{B}^{I}\cap\mathcal{S}(P)\neq\emptyset$, $\mathcal{B}^{II}\cap\mathcal{S}(D)\neq\emptyset$.\\
    \end{enumerate}
\end{theorem}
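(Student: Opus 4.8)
The plan is to convert the game data into cone-membership statements and then run two engines: a scaling argument when $v\neq 0$, and a separation/alternative argument when $v=0$. By Theorem \ref{th:3.3} the value $v$ exists and $\mathcal{B}^I,\mathcal{B}^{II}\neq\emptyset$. Using Lemma \ref{lem:3.5} with $r=v$ (and its dual counterpart for the base $S$ generated by $\alpha$) I would first record the explicit descriptions $\mathcal{B}^I=\{x\in S: Ax-v\beta\in K^*\}$ and $\mathcal{B}^{II}=\{y\in T: v\alpha-A^*y\in C^*\}$. Since $S,T$ are bases generated by $\alpha\in\text{int}C^*$, $\beta\in\text{int}K^*$, Lemma \ref{lem:3.4} gives $\langle\alpha,x\rangle>0$ on $C\setminus\{0\}$ and $\langle y,\beta\rangle>0$ on $K\setminus\{0\}$, so every nonzero element of $C$ (resp. $K$) can be normalized into $S$ (resp. $T$); this normalization is used repeatedly.

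For \textbf{(i)}, suppose $v>0$ and pick $x^*\in\mathcal{B}^I$. Then $Ax^*=(Ax^*-v\beta)+v\beta\in K^*+\text{int}K^*\subseteq\text{int}K^*$. Taking any $x_1\in\mathcal{F}(P)$ (consistency) and setting $\bar{x}:=x_1+tx^*$ for $t>0$, convexity of the cone $C$ gives $\bar{x}\in C$, while $A\bar{x}-b=(Ax_1-b)+tAx^*\in K^*+\text{int}K^*\subseteq\text{int}K^*$, so $(P)$ is strictly feasible; Proposition \ref{prop:2.1} then yields $\text{val}(P)=\text{val}(D)$ with $\mathcal{S}(D)$ non-empty, convex, bounded. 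If $v<0$, the mirror computation with $y^*\in\mathcal{B}^{II}$ gives $-A^*y^*\in\text{int}C^*$; adding a dual-feasible $y_1$ makes $(D)$ strictly feasible, and the dual form of Proposition \ref{prop:2.1} (available because the spaces are reflexive, see the remark preceding Theorem \ref{th:3.3}) gives the same conclusion with $\mathcal{S}(P)$.

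For \textbf{(ii)} I would establish the characterization that, when $v=0$, $(P)$ is strictly feasible iff $\mathcal{B}^{II}\cap b^{\perp}=\emptyset$, and $(D)$ is strictly feasible iff $\mathcal{B}^I\cap c^{\perp}=\emptyset$. The easy direction: if $(P)$ is strictly feasible with witness $\bar{x}$, then for $y^*\in\mathcal{B}^{II}$ (so $y^*\in K\setminus\{0\}$, $-A^*y^*\in C^*$) Lemma \ref{lem:3.4} gives $\langle y^*,A\bar{x}-b\rangle>0$, while $\langle y^*,A\bar{x}\rangle=\langle A^*y^*,\bar{x}\rangle\leq0$, forcing $\langle y^*,b\rangle<0$, so $\mathcal{B}^{II}\cap b^{\perp}=\emptyset$. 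The hard direction, which I expect to be the main obstacle, is the converse: assuming $(P)$ is not strictly feasible, i.e. $b\notin A(C)-\text{int}K^*$, I would separate $b$ from this open convex set (this is the generalized Ville alternative, and reflexivity of $Y^*$ is exactly what places the separating functional in $Y=Y^{**}$). Scaling the separating inequality over the cones $C$ and $K^*$ produces $y_0\in K\setminus\{0\}$ with $-A^*y_0\in C^*$ and $\langle y_0,b\rangle\geq0$. Consistency then closes the gap: for $x_1\in\mathcal{F}(P)$ one has $0\leq\langle y_0,Ax_1-b\rangle$ and $\langle y_0,Ax_1\rangle=\langle A^*y_0,x_1\rangle\leq0$, which pins $\langle y_0,b\rangle=0$; normalizing $y_0$ into $T$ yields an element of $\mathcal{B}^{II}\cap b^{\perp}$. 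Contrapositively $\mathcal{B}^{II}\cap b^{\perp}=\emptyset$ forces $(P)$ strictly feasible, and Proposition \ref{prop:2.1} applies; the statement for $(D)$ is symmetric, separating $c$ from $A^*(K)+\text{int}C^*$. This delivers (ii).

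For \textbf{(iii)}, the first assertion follows by applying the easy direction contrapositively: $\mathcal{B}^{II}\cap b^{\perp}\neq\emptyset$ blocks strict feasibility of $(P)$ and $\mathcal{B}^I\cap c^{\perp}\neq\emptyset$ blocks that of $(D)$, so neither is strictly feasible. For the second assertion, choose $\hat{x}\in\mathcal{B}^I\cap c^{\perp}\cap\mathcal{F}(P)$ and $\hat{y}\in\mathcal{B}^{II}\cap b^{\perp}\cap\mathcal{F}(D)$: then $\hat{x}$ is primal-feasible with $\langle c,\hat{x}\rangle=0$, so $\text{val}(P)\leq0$, and $\hat{y}$ is dual-feasible with $\langle\hat{y},b\rangle=0$, so $\text{val}(D)\geq0$; since weak duality (\ref{eq:2.1}) gives $\text{val}(D)\leq\text{val}(P)$, the sandwich $0\leq\text{val}(D)\leq\text{val}(P)\leq0$ forces $\text{val}(P)=\text{val}(D)=0$, whereupon $\hat{x},\hat{y}$ are optimal and $\mathcal{B}^I\cap\mathcal{S}(P)\neq\emptyset$, $\mathcal{B}^{II}\cap\mathcal{S}(D)\neq\emptyset$. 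The only genuinely delicate point throughout is the separation step in (iii), where one must ensure the alternative functional is a legitimate strategy (element of $K$, resp. $C$) and then use consistency to upgrade the inequality $\langle y_0,b\rangle\geq0$ to the equality needed for the $b^{\perp}$ (resp. $c^{\perp}$) membership.
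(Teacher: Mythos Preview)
Your argument is correct. Parts (i) and (iii) essentially coincide with the paper's proof (the paper packages the $Ax^*\in\text{int}K^*$ step as Lemma~\ref{lem:4.2}, and in (iii) phrases the sandwich as complementary slackness~(\ref{eq:2.3}), but the content is the same). The genuine divergence is in the hard direction of the characterization underlying (ii). The paper does \emph{not} separate $b$ from $A(C)-\text{int}K^*$; instead it builds an auxiliary slack-augmented program $(\widetilde{P})$ on $C\times\mathbb{R}$ with constraint $Ax+\kappa\beta-b\in K^*$ and objective $\kappa$, observes that $(\widetilde{P})$ is strictly feasible with value $0$ (the latter precisely because $(P)$ is not strictly feasible), and then invokes Proposition~\ref{prop:2.1} to obtain a dual optimizer $y^*\in T$ with $-A^*y^*\in C^*$ and $\langle y^*,b\rangle=0$. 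Your Hahn--Banach route is more elementary and self-contained, and it makes the role of reflexivity transparent (landing the separating functional in $Y$); the paper's route is less direct but stays entirely within the strong-duality machinery it has already developed, which is thematically the point of the section. Both need consistency of $(P)$ at the same spot, to force $\langle y_0,b\rangle\le 0$. One small slip: in your final sentence ``the separation step in (iii)'' should read (ii).
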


\begin{remark}\label{rem:4} (i) The only case not covered, and thus the one that justifies the term ``almost" in the \textit{almost equivalence}, is the one where $v=0$, $\mathcal{B}^{I}\cap c^{\perp}\neq\emptyset$, $\mathcal{B}^{II}\cap b^{\perp}\neq\emptyset$, and at least one of the sets $\mathcal{B}^{I}\cap c^{\perp}\cap\mathcal{F}(P)$, $\mathcal{B}^{II}\cap b^{\perp}\cap\mathcal{F}(D)$ is empty for any chosen gane, i.e., for any initial choice of $\alpha\in\text{int}C^*$ and $\beta\in\text{int}K^*$. In this case, the absence of a zero-duality gap does not depend on which of the latter two sets is empty (Example \ref{ex:4.4}). Further, depending on the sign of the game value $v$, one can determine exactly which of the problems \hyperlink{primal}{$(P)$} and \hyperlink{dual}{$(D)$} is strictly feasible in part $(i)$ of Theorem \ref{th:4.1}. In similar fashion, one can also determine exactly which one is strictly feasible in part $(ii)$, by checking the infeasibility of the sets $\mathcal{B}^{I}\cap c^{\perp}$, $\mathcal{B}^{II}\cap b^{\perp}$.\par
(ii) A significant observation can be made regarding strong duality for the pair \hyperlink{primaldual}{$\{(P),(D)\}$}: If the chosen (w.r.t. $\alpha,\;\beta$) game has a non-zero value, then strong duality holds for any such feasible primal-dual pair, no matter what form $b$ and $c$ take. In particular, it depends only on the payoff operator $A$, as it determines the value of the game. On the other hand, if the game value is zero, then strong duality (strict feasibility and the complementarity conditions to be exact) for the pair \hyperlink{primaldual}{$\{(P),(D)\}$} depends solely on $b$, $c$, the feasibility sets and the optimal strategies of the players.
\end{remark}

As already hinted at the introduction and parts $(i)-(iii)$ above, in order to prove the almost equivalence we consider two cases regarding the value $v$ of the chosen game $G=(\alpha,\beta,A)$. First, a theorem of the alternative for the case $v\neq 0$ is stated and proved.

\begin{lemma}\label{lem:4.2}
    Consider a linear operator $A:X\rightarrow Y^*$ and closed convex cones $C\subseteq X,\;K\subseteq Y$. Also, consider the game $G=(\alpha,\beta,A)$ for some $\alpha\in \text{int} C^*,\;\beta\in\text{int}K^*$, and let $v$ be its game value. Then, $v\neq0$ if and only if exactly one of the following is true:\vspace{-0.3cm}
    \begin{enumerate}[(i)]
        \item $\exists y\in K\setminus\{0\}$ such that $-A^*y\in \text{int}C^*$,\vspace{-0.2cm}
        \item $\exists x\in C\setminus\{0\}$ such that $Ax\in \text{int}K^*$.\quad
    \end{enumerate}
\end{lemma}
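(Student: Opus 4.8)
The plan is to reduce everything to two sign-equivalences: I would show that alternative (ii) is equivalent to $v>0$ and, symmetrically, that alternative (i) is equivalent to $v<0$. Granting these, the conclusion is immediate, since $v\neq0$ means exactly $v>0$ or $v<0$, and because (i) forces $v<0$ while (ii) forces $v>0$, the two alternatives cannot hold simultaneously, which yields the ``exactly one'' clause. Throughout I would use that the value $v=\underline v=\overline v$ exists by Theorem \ref{th:3.3}, and that since $(\alpha,\beta)\in\text{int}C^*\times\text{int}K^*$, Lemma \ref{lem:3.4} gives $\text{int}C^*=C^{*s}$ and $\text{int}K^*=K^{*s}$; in particular $\langle\alpha,x\rangle>0$ for every $x\in C\setminus\{0\}$ and $\langle y,\beta\rangle>0$ for every $y\in K\setminus\{0\}$, so I may normalize freely between the bases $S,T$ and the punctured cones $C\setminus\{0\}$, $K\setminus\{0\}$ exactly as in Lemma \ref{lem:3.5}.

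First I would prove (ii)$\,\Leftrightarrow\,v>0$. For the forward implication, suppose $Ax\in\text{int}K^*$ for some $x\in C\setminus\{0\}$. Since $\text{int}K^*$ is open and $\beta\in Z$, there is $\varepsilon>0$ with $Ax-\varepsilon\beta\in K^*$, hence $\langle y,Ax\rangle\geq\varepsilon\langle y,\beta\rangle$ for all $y\in K$; normalizing $x$ into $S$ and restricting to $y\in T$ produces a uniform positive lower bound for $\langle y,Ax\rangle$, so $\underline v>0$ and thus $v>0$. Conversely, if $v>0$ then $\sup_{x\in S}\inf_{y\in T}\langle y,Ax\rangle>0$, so some $x\in S$ satisfies $\rho:=\inf_{y\in T}\langle y,Ax\rangle>0$; by the base representation of $T$ this gives $\langle y,Ax\rangle\geq\rho\langle y,\beta\rangle$ for all $y\in K$, i.e. $Ax-\rho\beta\in K^*$, and therefore $Ax=(Ax-\rho\beta)+\rho\beta\in K^*+\text{int}K^*\subseteq\text{int}K^*$ with $x\in S\subset C\setminus\{0\}$. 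The equivalence (i)$\,\Leftrightarrow\,v<0$ is entirely symmetric, working with the adjoint $A^*$, the cone $C$, the functional $\alpha$, and player $II$'s upper value $\overline v$.

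Finally I would assemble the statement. The cleanest way to see that (i) and (ii) are mutually exclusive is a direct pairing argument: if both held, choosing the witnessing $x\in C\setminus\{0\}$ and $y\in K\setminus\{0\}$ would force $\langle y,Ax\rangle>0$ (because $Ax\in K^{*s}$) and at the same time $\langle y,Ax\rangle=-\langle -A^*y,x\rangle<0$ (because $-A^*y\in C^{*s}$), a contradiction. Hence at most one alternative occurs, and together with the two sign-equivalences this proves that $v\neq0$ holds if and only if exactly one of (i),(ii) is true.

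The hard part will be the passage between pointwise and uniform positivity. Knowing only that $\langle y,Ax\rangle>0$ for each individual $y$ does not bound $\inf_{y\in T}\langle y,Ax\rangle$ away from $0$ over the possibly non-compact base $T$, and in the other direction a positive infimum must be promoted to genuine membership in the \emph{interior} of $K^*$. Both gaps are bridged by the ``$\beta$-absorption'' device used above---subtracting or adding a small multiple of $\beta$---which works precisely because $\beta\in\text{int}K^*$ keeps $Ax\mp\varepsilon\beta$ dual-feasible, combined with the reflexivity-driven identity $\text{int}K^*=K^{*s}$ of Lemma \ref{lem:3.4}. This is exactly where the reflexive-Banach framework and the hypotheses $\alpha\in\text{int}C^*$, $\beta\in\text{int}K^*$ carry the essential weight.
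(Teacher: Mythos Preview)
Your proof is correct and uses the same toolkit as the paper (Lemma~\ref{lem:3.4}, Lemma~\ref{lem:3.5}, Theorem~\ref{th:3.3}, and the direct pairing contradiction for mutual exclusivity). The organization differs slightly: you package the argument as the two sign-equivalences (ii)$\,\Leftrightarrow\,v>0$ and (i)$\,\Leftrightarrow\,v<0$, whereas the paper handles the forward direction by a case split (assume (i) fails, deduce $\overline v\ge 0$, hence $v>0$, hence (ii)) and the backward direction by the contrapositive via the saddle-point relation~(\ref{eq:4.3}). Your version is a touch more economical in that the step $v>0\Rightarrow$(ii) only needs $\underline v>0$ to pick some $x\in S$ with positive infimum, not the attainment $\mathcal{B}^I\neq\emptyset$ that the paper invokes; but both arguments ultimately rest on Theorem~\ref{th:3.3} and are the same proof in substance.
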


\begin{proof}
We first show that $(i)$ and $(ii)$ cannot both be true at the same time. Suppose that $\exists y\in K\setminus\{0\}$ and $\exists x\in C\setminus\{0\}$ such that $-A^*y\in \text{int}C^*$ and $Ax\in \text{int}K^*$. Then, by Lemma \ref{lem:3.4} we have that
\begin{equation}\label{eq:4.2}
        0< \langle -A^*y,x\rangle = - \langle y,Ax\rangle < 0,
\end{equation}
which is a contradiction. For the ``only if" direction, let $v\neq0$ and suppose that there doesn't exist any $y\in K\setminus\{0\}$ such that $-A^*y\in \text{int}C^*$ (else there is nothing to show). We claim that $v \geq0$. Indeed, assume otherwise and take $y^*\in\mathcal{B}^{II}$ (such an optimal solution exists due to Theorem \ref{th:3.3}). Then, $\langle y^*,Ax\rangle\leq v \langle \alpha,x\rangle$ holds for every $x\in C\setminus\{0\}$, due to the dual counterpart of Lemma \ref{lem:3.5}. Therefore, $\langle A^*y^*,x\rangle< 0$ for every $x\in C\setminus\{0\}$ by Lemma \ref{lem:3.4}. This implies that $-A^*y^*\in \text{int}C^*$, which is a contradiction. Hence $v\geq0$. But $v\neq0$, so $v>0$. For any $x^*\in\mathcal{B}^{I}$ we have $\langle y,Ax^*\rangle \geq v\langle y,\beta\rangle>0$ $\forall y\in K\setminus\{0\}$ by Lemma \ref{lem:3.5}. It is straightforward, again by Lemma \ref{lem:3.4}, that $Ax^*\in \text{int}K^*$.\par
     For the opposite direction, suppose that $v=0$. Then, by (\ref{eq:2.6}) and Theorem \ref{th:3.3}, there exist $x^*\in S$, $y^*\in T$ such that 
    \begin{equation}\label{eq:4.3}
        \langle A^*y,x^*\rangle \geq 0\geq \langle A^*y^*,x\rangle,\;\;\forall x\in S,\;\forall y\in T.
    \end{equation}
    Assume that $\exists y\in K\setminus\{0\}$ such that $-A^*y\in \text{int}C^*$. Then, $\hat{y}:= \displaystyle\frac{y}{\langle y,\beta\rangle}\in T$ and $-A^*\hat{y}\in \text{int}C^*$ by Lemma \ref{lem:3.4}. Hence $\langle A^*\hat{y},x^*\rangle<0$, which contradicts (\ref{eq:4.3}). Similarly, if $(ii)$ holds, we can find $\hat{x}\in S$ so that (\ref{eq:4.3}) is violated.    
\end{proof}

It should be mentioned that, with a slight modification in the proof, the sets $C\setminus\{0\}$, $K\setminus\{0\}$ of parts $(i)$, $(ii)$ can be replaced by $\mathcal{B}^I$, $\mathcal{B}^{II}$, respectively, since the latter are non-empty by Theorem \ref{th:3.3}. Next, we give a characterization of strict feasibility when the value of the chosen game is zero.

\vspace{0.1cm}\begin{lemma}\label{lem:4.3}
    Consider a linear operator $A:X\rightarrow Y^*$ and closed convex cones $C\subseteq X,\;K\subseteq Y$, such that the primal-dual pair \hyperlink{primaldual}{$\{(P),(D)\}$} is feasible. Also, consider the game $G=(\alpha,\beta,A)$ for some $\alpha\in\text{int} C^*,\;\beta\in\text{int}K^*$, and let $v$ be its game value. If $v=0$, then the following equivalences hold:
    \begin{equation}\label{eq:4.4}
        (P) \mbox{\;\;is strictly feasible\;\;} \Longleftrightarrow\; \forall y\in T: -A^*y\in C^*\Rightarrow \langle y,b\rangle<0.
    \end{equation}
    \begin{equation}\label{eq:4.5}
        (D) \mbox{\;\;is strictly feasible\;\;} \Longleftrightarrow\; \forall x\in S: Ax\in K^*\Rightarrow \langle c,x\rangle>0.\;\;\;\;
    \end{equation}
\end{lemma}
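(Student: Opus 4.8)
The plan is to establish (\ref{eq:4.4}) directly and then obtain (\ref{eq:4.5}) by a symmetric, ``dualized'' argument that interchanges the roles of $(P)$ and $(D)$, of $C$ and $K$, of $A$ and $A^*$, and of $b$ and $c$, since the two statements are formally dual. Throughout I would read strict feasibility of $(P)$ as the geometric condition that the convex set $M:=A(C)-b=\{Ax-b:x\in C\}\subset Y^*$ meets the open convex cone $\text{int}K^*$, so that (\ref{eq:4.4}) becomes a conic theorem of the alternative for the pair $(C,K^*)$.

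For the forward implication ``$\Rightarrow$'' in (\ref{eq:4.4}), I would suppose $\bar{x}\in C$ is strictly feasible for $(P)$, i.e.\ $A\bar{x}-b\in\text{int}K^*$, and take any $y\in T$ with $-A^*y\in C^*$. Since $\text{int}K^*\neq\emptyset$ and $Y$ is reflexive, Lemma \ref{lem:3.4} gives $\text{int}K^*=K^{*s}$, so from $y\in K\setminus\{0\}$ (note $\langle y,\beta\rangle=1$) we get $\langle y,A\bar{x}-b\rangle>0$. Rewriting $\langle y,A\bar{x}\rangle=\langle A^*y,\bar{x}\rangle=-\langle -A^*y,\bar{x}\rangle\leq0$, because $-A^*y\in C^*$ and $\bar{x}\in C$, yields $\langle y,b\rangle<\langle y,A\bar{x}\rangle\leq0$, which is the right-hand side of (\ref{eq:4.4}). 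The forward implication of (\ref{eq:4.5}) is obtained identically, using $-A^*\bar{y}+c\in\text{int}C^*=C^{*s}$ together with $x\in S\subset C\setminus\{0\}$.

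The reverse implication ``$\Leftarrow$'' is the main obstacle and is where I would invoke separation. Arguing by contraposition, assume $(P)$ is not strictly feasible, so $M\cap\text{int}K^*=\emptyset$. As $\text{int}K^*$ is a nonempty open convex set and $M$ is convex, the Hahn--Banach separation theorem provides a nonzero continuous functional separating them; by reflexivity this functional is represented by some $y\in Y\setminus\{0\}$, giving a scalar $\gamma$ with $\langle y,m\rangle\leq\gamma\leq\langle y,k\rangle$ for all $m\in M$ and $k\in\text{int}K^*$. Exploiting that $\text{int}K^*$ is a cone forces $\langle y,k\rangle\geq0$ on $\text{int}K^*$, hence on $K^*=\text{cl}(\text{int}K^*)$, so $y\in K^{**}=K$ (here closedness of $K$ is used), and $\gamma\leq0$ follows. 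Likewise, exploiting that $C$ is a cone in $\langle y,Ax-b\rangle\leq\gamma\leq0$ (all $x\in C$) forces $\langle y,Ax\rangle\leq0$ for all $x\in C$, i.e.\ $-A^*y\in C^*$, while the choice $x=0$ gives $\langle y,b\rangle\geq0$. Since $\beta\in\text{int}K^*$ and $y\in K\setminus\{0\}$, Lemma \ref{lem:3.4} gives $\langle y,\beta\rangle>0$, so $\hat{y}:=y/\langle y,\beta\rangle\in T$ still satisfies $-A^*\hat{y}\in C^*$ and $\langle\hat{y},b\rangle\geq0$; this is exactly the negation of the right-hand side of (\ref{eq:4.4}), completing the contrapositive. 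The reverse implication of (\ref{eq:4.5}) follows by separating $N:=c-A^*(K)\subset X^*$ from $\text{int}C^*$, with the separating functional lying in $X$ by reflexivity and landing in $C^{**}=C$, and then normalizing through $\langle\alpha,x\rangle>0$.

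The crux is thus the separation step in a reflexive setting: reflexivity is what lets me identify the separating functional of a subset of $Y^*$ (resp.\ $X^*$) with an element of $Y$ (resp.\ $X$) and then place it inside $K=K^{**}$ (resp.\ $C=C^{**}$), while the cone-scaling arguments pin the separation constant at $0$ and convert the separating inequalities into the dual-cone membership $-A^*y\in C^*$ and the sign condition $\langle y,b\rangle\geq0$. I would remark that this argument is a pure conic theorem of the alternative and does not appear to use the hypothesis $v=0$ directly; that hypothesis is rather the regime in which Lemma \ref{lem:4.3} is later applied (the cases $v\neq0$ being governed by Lemma \ref{lem:4.2}), and consistency of $\{(P),(D)\}$, though available, is not essential for the equivalences themselves.
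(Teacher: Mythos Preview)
Your argument is correct. The forward implication in both (\ref{eq:4.4}) and (\ref{eq:4.5}) matches the paper's reasoning (strict positivity via Lemma~\ref{lem:3.4}), but your treatment of the reverse implication is genuinely different. The paper does \emph{not} separate $M=A(C)-b$ from $\text{int}K^*$ directly; instead it introduces an auxiliary primal problem $(\widetilde{P})$ on $C\times\mathbb{R}$ with objective $\kappa$ and constraint $Ax+\kappa\beta-b\in K^*$, observes that $(\widetilde{P})$ is strictly feasible with optimal value $0$ precisely when $(P)$ is not strictly feasible, and then applies Proposition~\ref{prop:2.1} to obtain a dual optimal $y^*\in T$ with $-A^*y^*\in C^*$ and $\langle y^*,b\rangle=0$. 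The paper also uses consistency of $\{(P),(D)\}$ in a preliminary step to pin down $\langle y,b\rangle\leq 0$ (so that the negation of ``$\langle y,b\rangle<0$'' becomes ``$\langle y,b\rangle=0$''), whereas your separation argument yields $\langle y,b\rangle\geq 0$ directly and needs neither consistency nor $v=0$. Your route is more elementary and self-contained; the paper's route has the conceptual appeal of deriving the alternative from strong duality itself (applied to an auxiliary pair that is strictly feasible by construction), which fits its overall theme of relating minimax and duality, at the cost of invoking Proposition~\ref{prop:2.1} inside a lemma that is later used to \emph{establish} strict feasibility. Your closing remark that $v=0$ and consistency are not essential for the bare equivalences is accurate; in the paper they serve to guarantee that the sets $\{y\in T:-A^*y\in C^*\}$ and $\{x\in S:Ax\in K^*\}$ are nonempty and to sharpen ``$\geq 0$'' to ``$=0$''.
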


\begin{proof}
    We will only show (\ref{eq:4.4}) (the dual relation (\ref{eq:4.5}) is similarly proved, and the dual counterparts of Lemma \ref{lem:3.4} and Proposition \ref{prop:2.1} are used; see Remark \ref{rem:reflexive}). Before proving the two directions of (\ref{eq:4.4}) separately, we begin with an important observation: Given $v=0$, Theorem \ref{th:3.3} implies that there exist $x^*\in S$ and $y^*\in T$ such that (\ref{eq:4.3}) holds. Hence, the intersections $S\cap\{x\in C:Ax\in K^*\}$ and $T\cap\{y\in K: -A^*y\in C^*\}$ are non-empty. In addition, for every $x\in S$ such that $Ax\in K^*$ we have $\langle c,x\rangle \geq0$. To see this, suppose that there exists some $x\in S$ such that $Ax\in K^*$ and $\langle c,x\rangle<0$. If $\tilde{x}\in C$ is feasible for \hyperlink{primal}{$(P)$}, then $\lambda x+\tilde{x}$ is also a feasible solution for every $\lambda>0$. By taking $\lambda\rightarrow +\infty$ we obtain $\text{val}(P)=-\infty$, which is impossible due to the feasibility of \hyperlink{dual}{$(D)$}. Similarly, for every $y\in T$ such that $-A^*y\in C^*$ we have $\langle y,b\rangle\leq0$.\par
 Now, let $y^*\in T$
 such that $-A^*y^*\in C^*$ and assume that \hyperlink{primal}{$(P)$} is strictly feasible. Then, there exists $x^*\in C$ such that $Ax^*-b\in \text{int}K^*$. By Lemma \ref{lem:3.4} we find
 \begin{equation}
     0\geq \langle A^*y^*,x^*\rangle=\langle y^*,Ax^*\rangle>\langle y^*,b\rangle.
 \end{equation}
 For the other direction, suppose that \hyperlink{primal}{$(P)$} is not strictly feasible. We define the linear operator $A_\beta:X\times\mathbb{R}\rightarrow Y^*$ by $A_\beta(x,\kappa):= Ax+\kappa\beta$.
   Consider the primal program
   \begin{center}\hypertarget{primaltilde}{}
\vspace{-0.7cm}\begin{equation*}{(\widetilde{P})\;\;\;\;\;\;\;\;}
\begin{array}{ll}
    \inf\limits_{x,\kappa}\;\; \langle \widetilde{c},(x,\kappa)\rangle\\
    \text{s.t.}\;\; A_\beta(x,\kappa)-b\in K^*\\ \;\;\;\;\;\;(x,\kappa)\in C\times\mathbb{R}
    \end{array}
\end{equation*}
\end{center}
For $\widetilde{c}:= (0_W,1)$, the objective function is equal to $\kappa$. Since \hyperlink{primal}{$(P)$} is feasible, \hyperlink{primaltilde}{$(\widetilde{P})$} is also feasible. In addition, it is strictly feasible: For $x=0_X$, one can find $\kappa>0$ large enough such that $\kappa\beta-b\in \text{int}K^*$. Also, its objective value is always non-negative: Suppose that there exist $x\in C$, $\kappa<0$ such that $Ax+\kappa\beta-b\in K^*$. Then $Ax+\kappa\beta -b -\kappa\beta\in \text{int}K^*$, which is a contradiction, since \hyperlink{primal}{$(P)$} is not strictly feasible. Thus, $\text{val}(\widetilde{P})$ is attained for any $x\in \mathcal{F}(P)$ and $\kappa=0$, and it is equal to $0$. Now, the dual of \hyperlink{primaltilde}{$(\widetilde{P})$} is the following:
\begin{center}\hypertarget{dualtilde}{}
\vspace{-0.7cm}\begin{equation*}{(\widetilde{D})\;\;\;\;\;\;\;\;}
\begin{array}{ll}
    \sup\limits_{y}\;\; \langle y,b\rangle\\
    \text{s.t.}\;\; -A^*y\in C^*
    \\ \;\;\;\;\;\;\;\hspace{0.06cm}-\langle y,\beta\rangle+1=0\\ \;\;\;\;\;\;\;\;y\in K
    \end{array}
    \end{equation*}
\end{center}

This is feasible (but not strictly feasible), as $\mathcal{B}^{II}$ is non-empty, and the objective value is bounded above by zero. Since \hyperlink{primaltilde}{$(\widetilde{P})$} is strictly feasible, Proposition \ref{prop:2.1} yields $\text{val}(\widetilde{P})=\text{val}(\widetilde{D})=0$ and $\mathcal{S}(\widetilde{D})\neq\emptyset$. That is, there exists some $y^*\in T$ such that $-A^*y^*\in C^*$ and $\langle y^*,b\rangle=0$.
\end{proof}

The proof of the almost equivalence follows.

\begin{proof}[Proof of Theorem 3]
$(i)$ By Lemma \ref{lem:4.2}, either there exists some $x^*\in C\setminus\{0\}$ such that $Ax^*\in \text{int}K^*$, or some $y^*\in K\setminus\{0\}$ such that $-A^*y^*\in \text{int}C^*$ (but not both). Suppose that the former is true, and let $x$ be a feasible solution for \hyperlink{primal}{$(P)$}. Then $x^*+x$ is a strictly feasible solution for \hyperlink{primal}{$(P)$}. The result follows from Proposition \ref{prop:2.1}. Likewise, if the latter is true, then \hyperlink{dual}{$(D)$} is strictly feasible and the result follows from the dual counterpart of Proposition \ref{prop:2.1}.\par
$(ii)$ Suppose that $v=0$ and that the set $\mathcal{B}^{I}\cap c^{\perp}$ is empty. Then, (\ref{eq:4.3}) and Lemma \ref{lem:4.3} imply that \hyperlink{dual}{$(D)$} is strictly feasible. The result once again follows from the dual counterpart of Proposition \ref{prop:2.1}. Accordingly, if $\mathcal{B}^{II}\cap b^{\perp}=\emptyset$, then \hyperlink{primal}{$(P)$} is strictly feasible by Lemma \ref{lem:4.3} and (\ref{eq:4.3}), and strong duality between \hyperlink{primal}{$(P)$} and \hyperlink{dual}{$(D)$} follows from Proposition \ref{prop:2.1}.\par
$(iii)$ Suppose that $v=0$, $\mathcal{B}^{I}\cap c^{\perp}\neq\emptyset$ and $\mathcal{B}^{II}\cap b^{\perp}\neq\emptyset$. Then, (\ref{eq:4.3}) and Lemma \ref{lem:4.3} directly imply that neither of the programs \hyperlink{primal}{$(P)$}, \hyperlink{dual}{$(D)$} is strictly feasible. If, in addition, there exist $x^*\in\mathcal{B}^{I}$ feasible for \hyperlink{primal}{$(P)$} such that $Ax^*\in K^*$, $\langle c,x^*\rangle=0$, and there exists some $y^*\in\mathcal{B}^{II}$ feasible for \hyperlink{dual}{$(D)$} such that $-A^*y^*\in C^*$, $\langle y^*,b\rangle=0$, then the complementarity conditions (\ref{eq:2.3}) hold. Therefore, $\text{val}(P)=\text{val}(D)=0$ and $x^*$, $y^*$ are optimal solutions for \hyperlink{primal}{$(P)$} and \hyperlink{dual}{$(D)$}, respectively.
\end{proof}

 Recall from part $(iii)$ of Theorem \ref{th:4.1} that we do not determine strong duality when either of the sets $\mathcal{B}^{I}\cap c^{\perp}\cap\mathcal{F}(P)$, $\mathcal{B}^{II}\cap b^{\perp}\cap\mathcal{F}(D)$ is empty. We now illustrate that this unexplored case is sharp. More specifically, we fix a fair semidefinite game and construct various primal-dual pairs of SDPs in order to show that the absence of a zero duality gap is possible in each of the following two ``pathological" scenarios: a) exactly one of $\mathcal{B}^{I}\cap c^{\perp}\cap\mathcal{F}(P)$, $\mathcal{B}^{II}\cap b^{\perp}\cap\mathcal{F}(D)$ is empty; and b) both sets are empty.

\begin{example}\label{ex:4.4}
    Consider the closed convex cones $\mathbb{R}^{n}_+\subseteq\mathbb{R}^{n}$, $\mathcal{S}^m_+\subseteq\mathcal{S}^m$, where $\mathcal{S}^m$ is the space of all $m$ by $m$ symmetric matrices with real entries, and $\mathcal{S}^{m}_+$ is the space of all $m$ by $m$ positive semidefinite matrices with real entries. We equip $\mathbb{R}^{n}$ with the standard inner product and $\mathcal{S}^{m}$ with the Frobenius inner product. Recall that, for $B,C\in\mathcal{S}^m$ the Frobenius product is the inner product $\langle B,C\rangle=  B\bullet C:=\text{tr}(BC)=\sum_{i=1}^{m}\sum_{j=1}^{m}b_{ij}c_{ij}$. In addition, it is a known fact that $\left(\mathcal{S}^{m}_{+}\right)^*=\mathcal{S}^{m}_{+}$ and $\left(\mathbb{R}^{n}_{+}\right)^*=\mathbb{R}^{n}_{+}$ for any $m,n\in\mathbb{N}$, i.e., the cones are self-dual. It is also known that the interior of $\mathcal{S}^{m}_{+}$ is exactly the set of all $m$ by $m$ positive definite matrices (Lemma \ref{lem:3.4}).\vspace{0.2cm}\par
 For a matrix $X\in\mathcal{S}^3$ we will write $X=\begin{pmatrix}
        x_{11} & x_{12} & x_{13} \\
        x_{12} & x_{22} & x_{23} \\
        x_{13} & x_{23} & x_{33}
        \end{pmatrix}$, and, using the usual notation (assuming that the dimension in this example remains fixed and equal to 3), we will write $X\succeq 0
        $ when $X\in\mathcal{S}^3_+$. Moreover, we will denote a 3 by 3 matrix with diagonal elements $s_1,s_2,s_3$ and all other entries equal to zero by $\text{diag}(s_1,s_2,s_3)$.\par
        Consider a two-player zero-sum game $G=(S,T,u)$, where
    \begin{equation*}
        S:=  \left\{X\in\mathcal{S}^3_{+}:E\bullet X=1\right\}, \;\;E:=I_3;\;\hspace{0.04cm}
    \end{equation*}
    \begin{equation*}
        T:=  \left\{y\in\mathbb{R}^2_+:e^\top y=1\right\}, \;\;e:=(1,1)^\top;
    \end{equation*}
    \begin{equation*}        A:\mathcal{S}^3\rightarrow\mathbb{R}^2\mbox{\;with\;} A(X):= (A_1\bullet X,A_2\bullet X)^\top,\mbox{\;where\;} A_1:=  \begin{pmatrix}
            0 & -1 & 0 \\
            -1 & 0 & 0 \\
            0 & 0 & 1
        \end{pmatrix}\text{\;and\;} A_2:=  \begin{pmatrix}
            0 & 0 & 0 \\
            0 & -1 & 0 \\
            0 & 0 & 0
        \end{pmatrix}.
    \end{equation*}

The payoff function is then equal to $u(X,y)=A(X)^\top y=y_1(x_{33}-2x_{12})-y_2x_{22}$, for every $X\in S$, $y\in T$. The following strategies constitute a saddle point:
\begin{equation*}
 X^*=\begin{pmatrix}
    1 & 0 & 0 \\
    0 & 0 & 0\\
    0 & 0 & 0
\end{pmatrix},\;y^*=(0,1)^\top.
\end{equation*} 
Further, $u(X^*,y^*)=0$. Indeed, we have $u(X^*,y^*)=0\geq-x_{22}= u(X,y^*)$ for every $X\in S$. Also, $u(X^*,y)\geq u(X^*,y^*)$ for every $y\in T$, since each side of the inequality is zero. In addition, the optimal strategy $y^*$ is unique for player $II$. To see this, let $\hat{y}\in T$ be another optimal strategy . Then, since $v=0$, we must have $u(X,\hat{y})\leq 0$ for every $X\in S$. For the strategy $\hat{X}:= \text{diag}(0,0,1)$ we deduce $\hat{y}_1\leq 0$. As $\mathcal{B}^{II}=\{y^*\}$, a simple inspection then shows that $\mathcal{B}^I=\{X\succeq 0:x_{22}=0,\;x_{11}+x_{33}=1\}$.\vspace{0.2cm}\par
Now, for the above linear operator $A$ and points $b=(b_1,b_2)^\top\in\mathbb{R}^2$, $C\in\mathcal{S}^{3}$ (free for now), we consider the following primal-dual pair of SDPs:

\vspace*{-0.7cm}\begin{center}\hypertarget{pb}{}
\begin{equation*}{({P_b})\;\;\;\;\;\;\;\;}
\begin{array}{ll}
    \inf\limits_{X}\;\; C\bullet X\\
    \text{s.t.}\;\; x_{33}-2x_{12}\geq b_1 \\
    \;\;\;\;\;\;-x_{22} \geq b_2 \\ \;\;\;\;\;\;X\succeq 0
    \end{array}\hspace{1.4cm}
\end{equation*}
\end{center}

\begin{center}\hypertarget{dc}{}
\vspace{-0.3cm}\begin{equation*}{\hspace{1.3cm} ({D_C})\;\;\;\;\;\;\;\;}
\begin{array}{ll}
    \sup\limits_{y}\;\; b^\top y\\
    \text{s.t.}\;\;y_1 \begin{pmatrix}
            0 & 1 & 0 \\
            1 & 0 & 0 \\
            0 & 0 & -1
        \end{pmatrix} + y_2 \begin{pmatrix}
            0 & 0 & 0 \\
            0 & 1 & 0 \\
            0 & 0 & 0
        \end{pmatrix} +C \succeq 0\\ \;\;\;\;\;\;\;y_1,\;y_2\geq 0
    \end{array}
    \end{equation*}
\end{center}

We study four different primal-dual pairs:
\begin{enumerate}
    \item Let $b:=  (1,0)^\top$ and $C:= \text{diag}(0,1,1)$. Then, \hyperlink{pb}{$(P_b)$} is feasible (take $x_{33}\geq 1$ and all other entries of $X$ to be equal to $0$), but not strictly feasible: The second constraint of \hyperlink{pb}{$(P_b)$} implies that, for any feasible $X\succeq 0$, $x_{22}$ has to be zero. The dual \hyperlink{dc}{$(D_C)$} is also feasible (take $y_1=y_2=0$), but not strictly feasible: We can see that any feasible solution must satisfy $y_1=0$ and that the matrix $\text{diag}(0,y_2+1,1)$ is never positive definite for any non-negative value of $y_2$. Observe that the non-strict feasibility of the two programs is also derived from Theorem \ref{th:4.1}. Indeed, we find $\mathcal{B}^{I}\cap C^{\perp}\neq \emptyset$ and  $\mathcal{B}^{II}\cap b^{\perp}\neq\emptyset$, as $C\bullet X^*=0$ and $b^\top y^*=0$, respectively. Further, note that $y^*$ is a feasible solution for \hyperlink{dc}{$(D_C)$}. In addition, $\mathcal{B}^{I}\cap \mathcal{F}(P_b)=\{\hat{X}\}$, for $\hat{X}\in S$ as above. However, $C\bullet\hat{X}\neq 0$. Therefore, we deduce that $\mathcal{B}^{I}\cap C^{\perp}\cap\mathcal{F}(P_b)=\emptyset$ and $\mathcal{B}^{II}\cap b^{\perp}\cap\mathcal{F}(D_C)\neq\emptyset$, which falls in the pathological sub-region not covered by Theorem \ref{th:4.1}. Notice that the value of the primal is attained for $x_{22}=0$, $x_{33}=1$ and it is equal to 1, while the value of the dual is equal to $0$ and attained trivially for $y_1=0$ and any $y_2\geq 0$. Namely, a non-zero duality gap exists. 
\item Let $b=(0,0)^\top$ and $C=A_1$. Then, $\mathcal{B}^{I}\cap C^{\perp}\cap\mathcal{F}(P_b)\neq\emptyset$ and $\mathcal{B}^{II}\cap b^{\perp}\cap \mathcal{F}(D_C)=\emptyset$. This comes from the fact that $C\bullet X^*=0$ and $X^*$ is feasible for \hyperlink{pb}{$(P_b)$}, while $y^*$ is not a feasible solution of \hyperlink{dc}{$(D_C)$}. Again, none of the two programs is strictly feasible by Theorem \ref{th:4.1} $(\mbox{as\;}C\bullet X^*=0$, $b^\top y^*=0)$. In this case, however, we find $\text{val}(P_b)=\text{val}(D_C)=0$, $\mathcal{S}(P_b)\neq\emptyset$ and $\mathcal{S}(D_C)\neq\emptyset$.
\item Let $b=(1,0)^\top$ and $C=\begin{pmatrix}
    0 & -1/2 & 0 \\
    -1/2 & 0 & 0 \\
    0 & 0 & 1
\end{pmatrix}$. Then $\mathcal{B}^{I}\cap C^{\perp}=\{X^*\}$ and $\mathcal{B}^{II}\cap b^{\perp}=\{y^*\}$, so none of the programs is strictly feasible, as implied by Theorem \ref{th:4.1}. Moreover, neither $X^*$ nor $y^*$ are feasible points. Hence, both $\mathcal{B}^{I}\cap C^{\perp}\cap\mathcal{F}(P_b)$ and $\mathcal{B}^{II}\cap b^{\perp}\cap\mathcal{F}(D_C)$ are empty. Observe that every feasible solution for \hyperlink{pb}{$(P_b)$} must satisfy $x_{22}=x_{12}=0$ and $x_{33}\geq 1$, while every feasible one of \hyperlink{dc}{$(D_C)$} must satisfy $y_1=1/2$. Thus, $\text{val}(P_b)=1>1/2=\text{val}(D_C)$, i.e., strong duality fails.
\item Let $b=(1,0)^\top$ and $C=A_1$. Then, as in the previous example case, we find $\mathcal{B}^{I}\cap C^{\perp}=\{X^*\}$, $\mathcal{B}^{II}\cap b^{\perp}=\{y^*\}$ and $\mathcal{B}^{I}\cap C^{\perp}\cap\mathcal{F}(P_b)=\mathcal{B}^{II}\cap b^{\perp}\cap\mathcal{F}(D_C)=\emptyset$. In this example, however, strong duality holds as $\text{val}(P_b)=1=\text{val}(D_C)$, and both optimal values are attained.
\end{enumerate}
\vspace{-0.2cm}

Finally, the second part of $(iii)$ of Theorem \ref{th:4.1} is elucidated: Let $b=(0,0)^\top$ and $C=\text{diag}(0,1,1)$. Then both $X^*$ and $y^*$ are feasible solutions, the complementarity conditions (\ref{eq:2.3}) hold and no duality gap exists.\vspace{0.5cm}
\end{example}

We conclude this section with a game-dependent criterion of infeasibility for the primal-dual pair. This result extends on those of \cite{dantzig1951proof, ickstadt2024semidefinite} (see $(iii)$ of Proposition \ref{prop:ickstadt} below). Its proof is omitted, as it can be found in the first paragraph of the proof of Lemma \ref{lem:4.3}.\vspace{0.2cm}\par

\begin{proposition}\label{prop:infeasibility}
    Consider a linear operator $A:X\rightarrow Y^*$, closed convex cones $C\subseteq X,\;K\subseteq Y$, and vectors $b\in Z$, $c\in W$. Also, consider the game $G=(\alpha,\beta,A)$ for some $\alpha\in\text{int} C^*,\;\beta\in\text{int}K^*$, and assume that $v=0$. If there exists $x^*\in\mathcal{B}^{I}$ such that $\langle c,x^*\rangle<0$ or $y^*\in\mathcal{B}^{II}$ such that $\langle y^*,b\rangle>0$, then at least one of $(P)$, $(D)$ is infeasible.
\end{proposition}

We ought to remark that, in contrast to the two preceding works, one needs only study the case $v=0$ for any chosen game in order to determine infeasibility of the primal-dual pair. This comes from the fact that $v\neq0$ implies strict feasibility of either program according to the proof of part $(i)$ of Theorem \ref{th:4.1}. As a matter of fact, while feasibility is used in the proof of part $(i)$, it is not actually needed. To see this, suppose without loss of generality that (according to Lemma \ref{lem:4.2}) there exists $x^*\in C\setminus\{0\}$ such that $Ax^*\in\text{int}K^*$. Then, there exists $\varepsilon>0$ small enough such that $Ax^*-\varepsilon b\in\text{int}K^*$, or equivalently, there exists $\kappa>0$ sufficiently large so that $A(\kappa x^*)-b\in \text{int}K^*$ and $\kappa x^*\in C$.

\vspace{-0.1cm}\subsection{The almost equivalence in the semidefinite setting}\vspace{-0.2cm}\label{sec:SDPgames}

In this subsection we study the almost equivalence of Theorem \ref{th:4.1} in the special setting of semidefinite games, and give results complementary to the almost equivalence that appears in Ickstadt et al. \cite{ickstadt2024semidefinite}. As already argued in the introduction, the two results are of different nature, but Theorem \ref{th:4.1} can be considered stronger in the sense that we give a game-dependent characterization of strict feasibility and cover SDP primal-dual pairs where a zero-duality gap exists but at least one program has no optimal solutions, all by studying any user-chosen game, in contrast to \cite{dantzig1951proof, ickstadt2024semidefinite} where the authors study a special symmetric game. \par

More specifically, we characterize a sub-case of the exception that appears in the almost equivalence of \cite{ickstadt2024semidefinite} and we claim that Theorem \ref{th:4.1} proves strong duality in cases that are not covered by the latter. We then use these results to strengthen the aforementioned work by providing an improved scheme for determining strong duality for a primal-dual pair of semidefinite programs.\par

Recall the pair \hyperlink{primaldual}{$\{(P),(D)\}$} and set $C=\mathcal{S}^m_{+}$, $K=\mathbb{R}^n_{+}$, $A:\mathcal{S}^m\to\mathbb{R}^n$ with $A=(A_1,...,A_n)$, where $A_i\in\mathcal{S}^m$ for every $i\in\{1,...,n\}$. Also, let $C\in\mathcal{S}^m$ and $b\in\mathbb{R}^n$. The primal-dual pair then becomes
\hypertarget{primaldualsdp}{} \hypertarget{primalsdp}{} \hypertarget{dualsdp}{}
\vspace{-0.7cm}\begin{center}
\begin{equation*}{\;\;\;\;\;\;\;\;\;(P-SDP)\;\;\;\;\;\;\;\;\;\;\;}
\begin{array}{ll}
    \inf\limits_{X\in\mathcal{S}^m}\;\; C\bullet X\\
    \hspace{0.2cm}\text{s.t.}\;\;\;\; A_i\bullet X\geq b_i,\;\; i=1,...,n\\ \;\;\;\;\;\;\;\;\;\;\hspace{0.03cm}X\succeq 0
    \end{array}\;\;\;\;\;\;
\end{equation*}
\end{center}
\vspace{-0.7 cm}\begin{center}
\begin{equation*}{(D-SDP)\;\;\;\;\;\;\;\;\;\;\;\;}
\begin{array}{ll}
    \sup\limits_{y\in\mathbb{R}^n}\;\; b^\top y\\
    \hspace{0.05cm}\text{s.t.}\;\;\; -\sum\limits_{j=1}^{n} y_j A_j + C\succeq 0\\ \;\;\;\;\;\;\;\;\; y\geq 0
    \end{array}\;\;\;\;\;\;
    \end{equation*}
\end{center}

In addition, consider the feasibility system\vspace{-0.2cm}

\begin{subequations}\label{eq:4.7}
\begin{align}
A_i\bullet \bar{X}-b_i \bar{t}\geq 0,\;\;\;  i=1,...,n,\label{eq:4.7a}\\
-\sum\limits_{j=1}^{n}\bar{y}_j A_j+\bar{t} C\succeq 0,\label{eq:4.7b}\hspace{0.7cm}\\
b^\top \bar{y}-C\bullet \bar{X}\geq 0,\label{eq:4.7c}\hspace{0.7cm}\\
e_n^\top \bar{y}+ \text{tr}(\bar{X})+\bar{t}=1,\label{eq:4.7d}\hspace{0.7cm}\\
\bar{y}\in \mathbb{R}^{n}_{+},\;\;\bar{X}\in\mathcal{S}^{m}_{+},\;\;\bar{t}\geq 0.\label{eq:4.7e}\hspace{0.7cm}
\end{align}
\end{subequations}

By invoking a symmetric version of the minimax theorem, Ickstadt et al. \cite[Lemma 5.2]{ickstadt2024semidefinite} showed that the above system has at least one solution. Using this, they proved a natural semidefinite generalization of Dantzig's \cite{dantzig1951proof} initial result of the almost equivalence:\vspace{0.2cm}\par
\begin{proposition}[\normalfont{Theorem 5.3 \cite{ickstadt2024semidefinite}}]\label{prop:ickstadt} Let $(\bar{y},\bar{X},\bar{t})$ be a solution of the system (\ref{eq:4.7}). Then,\vspace{-0.2cm}
    \begin{enumerate}[(i)]
        \item $\bar{t}(b^\top \bar{y}-C\bullet \bar{X})=0$.\vspace{-0.1cm}
        \item If $\bar{t}>0$, then $\bar{X}\bar{t}^{-1}$ and $\bar{y}\bar{t}^{-1}$ are optimal solutions to the pair \hyperlink{primaldualsdp}{$\{(P-SDP),(D-SDP)\}$}.\vspace{-0.1cm}
        \item If $b^\top \bar{y}- C\bullet \bar{X}>0$, then either \hyperlink{primalsdp}{$(P-SDP)$} or \hyperlink{dualsdp}{$(D-SDP)$} is infeasible.
    \end{enumerate}
\end{proposition}

In comparison with Theorem \ref{th:4.1},  Proposition \ref{prop:ickstadt} makes no mention of strict feasibility and it implies strong duality only with both $\mathcal{S}(P)$ and $\mathcal{S}(D)$ being non-empty. However, the case where a zero duality gap exists and at least one program does not attain its optimal value, does not represent a rare event in semidefinite programming \cite[Example 4.1.1]{wolkowicz2012handbook}. Moreover, in similar fashion to the linear programming setting in \cite{dantzig1951proof}, the above result ignores the case ``$\bar{t}=0$ and $b^\top y- C\bullet \bar{X}=0$". To this end, consider all solutions $(\bar{y},\bar{X},\bar{t})$ of (\ref{eq:4.7}) that lie in the following pathological sub-region:\vspace{0.2cm}\par
\hypertarget{theta1}{}
\noindent\textbf{($\Theta 1$):} $\bar{y}\neq 0$, $\bar{X}\neq 0_m$, $\bar{t}=0$ and $b^\top \bar{y}= C\bullet \bar{X}$.\vspace{0.2cm}\par

It turns out that the existence of such solutions amounts to lack of strict feasibility for the primal-dual pair.

\begin{lemma}\label{lem:theta1}
    Suppose that the primal-dual pair \hyperlink{primaldualsdp}{$\{(P-SDP),(D-SDP)\}$} is feasible. Also consider the game $G=(\alpha,\beta,A)$ for $\alpha\in\text{int}\mathcal{S}^{m}_{+}$ and $\beta\in\text{int}\mathbb{R}^{n}_{+}$, and let $v$ be its game value and $\mathcal{B}^I\times\mathcal{B}^{II}$ the set of its Nash equilibria. Then, there exists a solution $(\bar{y},\bar{X},\bar{t})$ to (\ref{eq:4.7}) that satisfies \hyperlink{theta1}{$(\Theta1)$} if and only if $v=0$, $\mathcal{B}^I\cap C^{\perp}\neq\emptyset$ and $\mathcal{B}^{II}\cap b^{\perp}\neq\emptyset$ for every $(\alpha,\beta)\in \text{int}\mathcal{S}^{m}_{+}\times\text{int} \mathbb{R}^{n}_{+}$.
\end{lemma}

\begin{proof}
    We first prove the ``if" direction. Consider the game $G=(I_m,e_n,A)$. Suppose $v=0$ and let $(X^*,y^*)$ be a Nash equilibrium so that $C\bullet X^*=0$ and $b^\top y^*=0$. By (\ref{eq:4.3}) and Lemma \ref{lem:3.5} we deduce that $A_i\bullet X^*\geq 0$ $\forall i\in\{1,...,n\}$ and $-\sum\limits_{j=1}^{n} y^*_j A_j\succeq 0$. Hence, $(y^*/2,X^*/2,0)$ solves (\ref{eq:4.7}).\par 
    Conversely, suppose that $(\bar{y},\bar{X},\bar{t})$ is a solution to (\ref{eq:4.7})  with $\bar{y}\neq 0$ and $\bar{X}\neq 0_m$ so that $\bar{t}=0$ and $b^\top \bar{y}-C\bullet \bar{X}=0$. Let $\alpha$ be a positive definite matrix and $\beta$ be a positive real vector. Then, by Lemma \ref{lem:3.4} we have $\text{tr}(\alpha \bar{X})>0$ and $\beta^\top \bar{y}>0$. Hence, $\left(\bar{X}(\text{tr}(\alpha \bar{X}))^{-1},\bar{y} (\beta^\top\bar{y})^{-1}\right)$ is a saddle point for $G=(\alpha,\beta,A)$, and $v=0$, as (\ref{eq:4.7a})-(\ref{eq:4.7b}) and Lemma \ref{lem:3.5} imply (\ref{eq:4.3}). The fact that $b^\top\bar{y}=C\bullet \bar{X}=0$ follows from the first part of the proof of Lemma \ref{lem:4.3}: Due to the feasibility of the SDP pair, we must have $C\bullet \bar{X}\geq 0$ and $b^\top \bar{y}\leq 0$. As $b^\top \bar{y}= C\bullet \bar{X}$, both inequalities must hold at equality.
\end{proof}

As already mentioned, \cite[Lemma 5.2]{ickstadt2024semidefinite} implies that the system (\ref{eq:4.7}) always has a solution. At the same time, by Theorem \ref{th:3.3} exactly one of the three preconditions of Theorem \ref{th:4.1} holds for the game $G=(\alpha,\beta,A)$, for any interior points $\alpha,\beta$. Therefore, from Lemma \ref{lem:theta1} and Theorems \ref{th:3.3}, \ref{th:4.1} we deduce the following:

\begin{theorem}\label{th:SDPsubcase}
    Suppose that the primal-dual pair \hyperlink{primaldualsdp}{$\{(P-SDP),(D-SDP)\}$} is feasible. If no solution of (\ref{eq:4.7}) satisfies \hyperlink{theta1}{$(\Theta1)$}, then at least one of \hyperlink{primalsdp}{$(P-SDP)$}, \hyperlink{dualsdp}{$(D-SDP)$} is strictly feasible. Moreover, $\text{val}(P-SDP)=\text{val}(D-SDP)$ and at least one of $\mathcal{S}(P-SDP)$, $\mathcal{S}(D-SDP)$ is non-empty, convex and bounded.
\end{theorem}

 As a consequence, the almost equivalence of the present work is able to guarantee strong duality in cases where Proposition \ref{prop:ickstadt} fails to do so. For instance, Theorem \ref{th:4.1} (Theorem \ref{th:SDPsubcase}) covers the case where all solutions of (\ref{eq:4.7}) are of the form $(0,\bar{X},0)$. We give a concrete example.

\begin{example}\label{ex:example2}
    Let $m=3$, $n=2$ and consider the vector $b:= (1,0)^\top$ and the symmetric matrices 
\begin{equation*}
    A_1:= \begin{pmatrix}
    0 & 1 & 0  \\
    1 & 0 & 0\\
    0 & 0 & 0
\end{pmatrix},\;\;
A_2:= \begin{pmatrix}
    0 & 0 & 0  \\
    0 & 1 & 0\\
    0 & 0 & -1
\end{pmatrix},\;\;
C:= \begin{pmatrix}
    1 & 0 & 0  \\
    0 & 0 & 0\\
    0 & 0 & 1
\end{pmatrix}.
\end{equation*}
\end{example}

Consider the system (\ref{eq:4.7}) and let $\bar{X}\in\mathcal{S}^{3}_{+}$, $\bar{y}\in\mathbb{R}^{2}_{+}$ and $\bar{t}\geq 0$ constitute some feasible solution. The constraint (\ref{eq:4.7b}) gives $\bar{y}_1=\bar{y}_2=0$. In turn, the conditions (\ref{eq:4.7c}) and (\ref{eq:4.7e}) give $\bar{x}_{11}=\bar{x}_{33}=0$. So (\ref{eq:4.7a}) yields $\bar{x}_{22}\geq 0$ and $\bar{t}\leq 0$. Therefore, the system (\ref{eq:4.7}) has the unique solution
\begin{equation}\label{eq:solution}
    \bar{y}=(0,0)^\top,\;\;\bar{X}=\begin{pmatrix}
    0 & 0 & 0 \\
    0 & 1 & 0 \\
    0 & 0 & 0
\end{pmatrix},\;\;\bar{t}=0.
\end{equation}
Observe that the constraint (\ref{eq:4.7c}) is tight, hence (\ref{eq:solution}) falls in the pathological region not covered by Proposition \ref{prop:ickstadt}, and one can therefore not determine strong duality using the latter result. However, we can invoke Theorem \ref{th:SDPsubcase} in order to deduce strong duality. The SDP primal-dual pair becomes
\vspace*{-0.7cm}\begin{center}
\begin{equation*}{(P-SDP)\;\;\;\;\;\;\;\;\;\;}
\begin{array}{ll}
    \inf\limits_{X\in\mathcal{S}^3}\;\; x_{11}+x_{33}\\
    \hspace{0.15cm}\text{s.t.}\;\;\;\; 2x_{12}\geq 1\\
    \;\;\;\;\;\;\;\;\;\hspace{0.1cm}x_{22}\geq x_{33}\\
    \;\;\;\;\;\;\;\;\;\hspace{0.04cm}X\succeq 0
    \end{array}\;\;\;\;\;\;\;\;\;\;\;\;\;
\end{equation*}
\end{center}
\vspace{-0.7 cm}\begin{center}
\begin{equation*}{\;\;\;\;\;\;\;\;\;\;\;\;\;\;\;\;\;\;\;\;\;\;\;\;\;(D-SDP)\;\;\;\;\;\;\;\;\;\;}
\begin{array}{ll}
    \sup\limits_{y\in\mathbb{R}^2}\;\; y_1\\
    \hspace{0.05cm}\text{s.t.}\;\; \begin{pmatrix}
    1 & -y_1 & 0 \\
    -y_1 & -y_2 & 0 \\
    0 & 0 & y_2+1
\end{pmatrix}\succeq 0\\ \;\;\;\;\;\;\;\;\;\hspace{0.04cm} y\geq 0
    \end{array}\;\;\;\;\;\;\;\;\;\;\;\;\;
    \end{equation*}
\end{center}
Observe that both problems are feasible (in fact the dual has the unique feasible solution $y^*=0$). Therefore, from Theorem \ref{th:SDPsubcase} we deduce that at least one of the programs is strictly feasible and the two programs have the same optimal value. Indeed, \hyperlink{primalsdp}{$(P-SDP)$} is strictly feasible and $\text{val}(P-SDP)=\text{val}(D-SDP)=0$: For small $\varepsilon>0$ set $x_{11}=x_{33}=\varepsilon$, $x_{12}=1/2$, $x_{22}=1/\varepsilon$ and all other entries to $0$; then let $\varepsilon\to 0$. Importantly, we find $\mathcal{S}(P-SDP)=\emptyset$ and $\mathcal{S}(D-SDP)=\left\{(0,0)^\top\right\}$, which also verifies our fourth argument that appears in Section \ref{sec:relatedwork}.\par

Unfortunately, one cannot expect more from the almost equivalence of Theorem \ref{th:4.1} in the case \hyperlink{theta1}{$(\Theta1)$}. To be more specific, consider a semidefinite game $G=(\alpha,\beta,A)$ and let $v$ be its game value and $\mathcal{B}^I\times\mathcal{B}^{II}$ be the set of its Nash equilibria. In turn, consider the following scenario: \vspace{0.2cm}\par
\hypertarget{theta2}{}
\noindent\textbf{($\Theta 2$):} $v=0$, $\mathcal{B}^{I}\cap C^{\perp}\neq\emptyset$, $\mathcal{B}^{II}\cap b^{\perp}\neq\emptyset$, and either $\mathcal{B}^{I}\cap C^{\perp}\cap\mathcal{F}(P-SDP)=\emptyset$ or $\mathcal{B}^{II}\cap b^{\perp}\cap\mathcal{F}(D-SDP)=\emptyset$.\vspace{0.2cm}\par

Theorem \ref{th:4.1} and Example \ref{ex:4.4} say that the only possible case where strong duality may fail is the one where \hyperlink{theta2}{$(\Theta2)$} holds for all pairs $(\alpha,\beta)\in \text{int}\mathcal{S}^{m}_{+}\times\text{int} \mathbb{R}^{n}_{+}$. Using this, we show that \hyperlink{theta1}{$(\Theta1)$} is ``sharp" when all solutions of (\ref{eq:4.7}) fall within the region not covered by Proposition \ref{prop:ickstadt}.

\begin{theorem}\label{th:theta2}
     Suppose that the primal-dual pair \hyperlink{primaldualsdp}{$\{(P-SDP),(D-SDP)\}$} is feasible. Also consider the game $G=(\alpha,\beta,A)$ for $\alpha\in\text{int}\mathcal{S}^{m}_{+}$ and $\beta\in\text{int}\mathbb{R}^{n}_{+}$, and let $v$ be its game value and $\mathcal{B}^I\times\mathcal{B}^{II}$ the set of its Nash equilibria. If there exists a solution to (\ref{eq:4.7}) that satisfies \hyperlink{theta1}{$(\Theta1)$} and no solutions with $\bar{t}>0$ of the same system exist, then \hyperlink{theta2}{$(\Theta2)$} holds for every $(\alpha,\beta)\in \text{int}\mathcal{S}^{m}_{+}\times\text{int} \mathbb{R}^{n}_{+}$.
\end{theorem}

\begin{proof}
    Consider a triple $(\bar{y},\bar{X},\bar{t})$ that satisfies (\ref{eq:4.7}) and \hyperlink{theta1}{$(\Theta1)$}. Lemma \ref{lem:theta1} directly gives the first part of \hyperlink{theta2}{$(\Theta2)$} for all $(\alpha,\beta)\in\text{int}\mathcal{S}^{m}_{+}\times \text{int}\mathbb{R}^{n}_{+}$. For the second part, suppose we can find some pair $(\alpha,\beta)\in \text{int}\mathcal{S}^{n}_{+}\times\text{int} \mathbb{R}^{m}_{+}$ so that the game $G=(\alpha,\beta,A)$ has value equal to zero and there exist $X^*\in\mathcal{B}^{I}\cap C^{\perp}\cap\mathcal{F}(P-SDP)$ and $y^*\in\mathcal{B}^{II}\cap b^{\perp}\cap\mathcal{F}(D-SDP)$. Then, the triplet $(y^*,X^*,1)$ satisfies (\ref{eq:4.7a})-(\ref{eq:4.7c}) and (\ref{eq:4.7e}). Further, suppose that $e_m^\top y^*+\text{tr}(X^*)+1=\mu$, where $\mu$ is positive due to Lemma \ref{lem:3.4}. Then, $\mu^{-1}(y^*,X^*,1)$ solves (\ref{eq:4.7}), a contradiction.
\end{proof}

The above analysis reinforces the result of Ickstadt et al. \cite{ickstadt2024semidefinite} and provides an improved \textit{plan} on determining strict feasibility and a zero-duality gap for the pair \hyperlink{primaldualsdp}{$\{(P-SDP),(D-SDP)\}$}: Suppose feasibility of the latter and let $(\bar{y},\bar{X},\bar{t})$ be a solution to (\ref{eq:4.7}). If $\bar{t}>0$, then strong duality holds. If no such solution exists, then the next step is to check whether a solution that satisfies \hyperlink{theta1}{$(\Theta1)$} exists. If one doesn't, then at least one program is strictly feasible, thus there is a zero-duality gap. If, however, a solution that satisfies \hyperlink{theta1}{$(\Theta1)$} does exist, then \hyperlink{theta2}{$(\Theta2)$} holds, meaning we cannot verify strong duality for the primal-dual pair.\vspace{0.2cm}\par

\vspace{-0.1cm} \subsection{A generalization of Ville's theorem of the alternative}\vspace{-0.2cm}
 
 We next prove that the minimax theorem for bases of convex cones, Theorem \ref{th:3.3}, is equivalent to a theorem of the alternative, which can be regarded as an extension of Ville's theorem  to reflexive Banach spaces. Results that deal with the consistency of alternative systems of the subsequent form have already been given for more general operators in Banach spaces; see, for instance, \cite[Theorem 4.1]{doi:10.1080/02331938908843428} and \cite[Theorem 3.1]{yang2000theorems}. However, none of the previous generalized theorems of the alternative have been known to be connected to the minimax theory of games, except the classical theorem of Ville \cite[Theorem 11]{adler2013equivalence}, \cite[Proposition 4]{von2024zero}. We are still working under the topological setting of Subsection \ref{sec:3.2}. Consider the following pairs of alternative systems:\vspace{0.2cm}
\begin{subequations}
\begin{align}
\exists y\in K\setminus\{0\} \text{\;\;such that\;} -A^*y\in C^*\label{eq:19a}\\
\exists x\in C\setminus\{0\} \text{\;\;such that\;} Ax\in \text{int}K^*.\label{eq:19b}
\end{align}
\end{subequations}
\begin{subequations}\label{eq:20}\vspace{-0.8cm}
\begin{align}
\exists y\in K\setminus\{0\} \text{\;\;such that\;} -A^*y\in\text{int} C^*\label{eq:20a}\;\;\\
\hspace{0.7cm}\exists x\in C\setminus\{0\} \text{\;\;such that\;} Ax\in K^*.\;\;\;\;\;\;\;\;\;\;\;\label{eq:20b}
\end{align}
\end{subequations}
\vspace*{0.01cm}

\begin{theorem}\label{th:4.5}
   The following are equivalent:\vspace{-0.2cm}
    \begin{enumerate}[(i)]
        \item For any two-player zero-sum game $G=(\alpha,\beta,A)$ defined by $\hyperlink{f1}{(F1)}$-$\hyperlink{f3}{(F3)}$, functionals $\alpha\in\text{int}C^*,\;\beta\in\text{int}K^*$, and a linear operator $A:X\rightarrow Y^*$, the minimax equality (\ref{eq:1.3}) holds.\vspace{-0.1cm}
        \item For any linear operator $A:X\rightarrow Y^*$ exactly one of (\ref{eq:19a})-(\ref{eq:19b}) and exactly one of (\ref{eq:20a})-(\ref{eq:20b}) hold.
    \end{enumerate}
\end{theorem}
\begin{proof}
    The proof of the ``only if" direction is simple, as it follows from the proof of Lemma \ref{lem:4.2} with some minor modifications. For completeness we present a different one, which appears to be shorter. Consider a linear operator $A:X\rightarrow Y^*$ and the game $G=(\alpha,\beta,A)$ for some $(\alpha,\beta)\in\text{int}C^*\times\text{int}K^*$. First, observe that if (\ref{eq:19a}) and (\ref{eq:19b}) ((\ref{eq:20a}) and (\ref{eq:20b}), resp.) are both true at the same time, i.e., if such $x\in C\setminus\{0\}$, $y\in K\setminus\{0\}$ exist, then (\ref{eq:4.2}) holds for those $x$ and $y$, where now the left (right, resp.) inequality is no longer strict. This is a contradiction. It then suffices to consider three cases regarding the value of the game. If $v=0$, then (\ref{eq:4.3}) holds, thus (\ref{eq:19a}) and (\ref{eq:20b}) are true. If $v>0$, then there exists an optimal strategy $x^*\in S$ such that $\langle y,Ax^*\rangle\geq v>0$ $\forall y\in T$. Lemmas \ref{lem:3.4}, \ref{lem:3.5} imply that (\ref{eq:19b}) holds, hence (\ref{eq:20b}) is also true. Similarly, if $v<0$ then there exists $y^*\in T$ such that $-A^*y^*\in\text{int}C^*$, hence (\ref{eq:19a}) and (\ref{eq:20a}) hold.\par
     For the ``if" direction, consider the game $G_A=(\alpha,\beta,A)$ for some linear operator $A$ and some interior points $\alpha$, $\beta$ of $C^*$, $K^*$, respectively. Lemma \ref{lem:3.6} allows us to assume that $\underline{v}>0$ without loss of generality. Define the linear operator $B:X\rightarrow Y^*$ by $B(x):= A(x)-\overline{v}E(x)$, where $E$ is the operator of Lemma \ref{lem:3.6}. We first show that there exists a strategy $\hat{y}\in T$ such that $-B^*\hat{y}\in C^*$. Suppose that this is not the case. Then (\ref{eq:19b}) is true, that is, there exists $x\in C\setminus\{0\}$ such that $Bx\in \text{int}K^*$. We can then find $\varepsilon>0$ sufficiently small so that $Bx-\varepsilon\overline{v}\beta\langle\alpha,x\rangle\in K^*$. But a simple scaling procedure and Lemma \ref{lem:3.5} imply that there exists some $x^*\in S$ such that $\langle y,Ax^*\rangle\geq\overline{v}(1+\varepsilon)$ $\forall y\in T$, hence $\underline{v}\geq\overline{v}(1+\varepsilon)$, which is a contradiction.\par
    We next show that there exists a strategy $\hat{x}\in S$ such that $B\hat{x}\in K^*$. Suppose that this is not the case. Then (\ref{eq:20a}) holds, i.e., there exists some $y\in K\setminus\{0\}$ such that $-B^*y\in \text{int}C^*$. Again, by a trivial scaling procedure, there exists $y^*\in T$ such that $-A^*y^*+\overline{v}\alpha\in\text{int}C^*$. We can find $\varepsilon'\in(0,1)$ such that $-A^*y^*+\overline{v}\alpha(1-\varepsilon')\in C^*$. The dual counterpart of Lemma \ref{lem:3.5} yields $\overline{v}(1-\varepsilon')\geq\langle A^*y^*,x\rangle\;\forall x\in S$, hence $\overline{v}(1-\varepsilon')\geq\overline{v}$, which is a contradiction.\par
    We have thus shown that there exist $\hat{y}\in T$ such that $-B^*\hat{y}\in C^*$ and $\hat{x}\in S$ such that $B\hat{x}\in K^*$. Thereby (\ref{eq:4.3}) holds, which directly implies that the game $G_B= (\alpha,\beta,B)$ has value zero and $(\hat{x},\hat{y})$ is a Nash equilibrium. By Lemma  \ref{lem:3.6} we deduce that $\underline{v}=\overline{v}$ and $(\hat{x},\hat{y})$ is a saddle point for $G_A$.
\end{proof}

\begin{remark} An immediate consequence of Theorem \ref{th:4.5} is the identification of a different proof of minimax (Theorem \ref{th:3.3}), which does not rely on duality theory of conic optimization but on separating hyperplane theorems instead, as $(ii)$ becomes a special case of main results in
\cite{doi:10.1080/02331938908843428, yang2000theorems}. Furthermore, it is easy to observe that, given a linear operator $A$, if at least one of (\ref{eq:19b}), (\ref{eq:20a}) holds, then at least one of \hyperlink{primal}{$(P)$}, \hyperlink{dual}{$(D)$} is strictly feasible, hence Proposition \ref{prop:2.1} implies strong duality for the pair \hyperlink{primaldual}{$\{(P),(D)\}$}. However, if (\ref{eq:19a}) and (\ref{eq:20b}) hold at the same time, then the value of the duality gap ``$\text{val}(P)-\text{val}(D)$" remains undetermined. This observation also justifies the need of stronger results (here Lemmas \ref{lem:4.2}, \ref{lem:4.3}) for a concrete relation between the minimax theorem and strong duality of conic linear programming.
\end{remark}

\vspace{-0.1cm}\section{Applications and Examples}\label{sec:5}\vspace{-0.2cm}

Many two-player zero-sum games that appear in the literature are embedded in the model described by \hyperlink{f1}{(F1)}--\hyperlink{f3}{(F3)}. We present here a series of examples and applications of games that belong to this class, many of which are known in less general strategic setups -- in particular the strategy sets that appear are usually bases or basic cone-leveled sets -- to point out the variety of games covered and to illustrate both the theoretical and the computational purposes of the present work. It is seen that the existential and computational results regarding Nash equilibria of the subsequent two-player zero-sum games are deduced by simply invoking Theorems \ref{th:3.1} and \ref{th:3.3}.

\vspace{-0.1cm}\subsection{The classical two-player zero-sum game}\vspace{-0.2cm}

Consider the classical two-player zero-sum game (von Neumann \cite{Neumann1928}) where $X=W=\mathbb{R}^{m}$, $Z=Y=\mathbb{R}^{n}$, both equipped with the standard inner product $\langle a,b\rangle=a^{\top}b$. Also, let $A$ represent an $m\times n$ real matrix, and the payoff function defined by $u(x,y):= x^{\top}Ay$. The strategy sets of the row and column player, respectively, are defined by 
\begin{equation*}
    S:= \left\{x\in\mathbb{R}^{m}_{+}:\sum_{i=1}^{m}x_i=1\right\} \mbox{\;\;and\;\;} T:= \left\{y\in\mathbb{R}^{n}_{+}:\sum_{j=1}^{n}y_j=1\right\},
\end{equation*}
 which represent the standard m and n-simplex. These sets are bases of the non-negative orthants $C=\mathbb{R}^{m}_{+}$, $K=\mathbb{R}^{n}_{+}$, with $\alpha:= (1,1,...,1)^\top\in\mathbb{R}^{m}$ and $\beta:= (1,1,...,1)^\top\in\mathbb{R}^{n}$, respectively. This two-player zero-sum game can be represented and efficiently solved by a pair of LPs of the form \hyperlink{LPS}{$\{(P-LP),(D-LP)\}$} (see \cite{dantzig2016linear, karmarkar1984new, nesterov1994interior}).

\vspace{-0.1cm}\subsection{Mixed extension of a continuous game with compact strategy sets}\vspace{-0.2cm}

  Consider a two-player zero-sum game $G=(V,U,p)$ where the strategy set of player $I$ is a compact Hausdorff topological vector space $V$, the strategy set of player $II$ is a compact Hausdorff topological vector space $U$, and $p(x,y)$ is a continuous function over $V\times U$. Now let $W:=  C(V)$, $X:=  C(V)^*$, $Z:=  C(U)$, $Y:=  C(U)^*$, where $C(V)$ is the space of all continuous functions over $V$, and $C(V)^*$ is its continuous dual. The pair $(X,W)$ (similarly the pair $(Z,Y)$) is equipped with a bilinear form $\langle\cdot,\cdot\rangle:W\times X\rightarrow \mathbb{R}$ defined by $\langle a,b\rangle:=  \int_{V}a(x)db(x)$. The mixed extension of $G$ is defined as a two-player zero-sum game $G'=(S,T,u)$ with the following structure (see Ville \cite{ville1938theorie}, Glicksberg \cite{glicksberg1952further}, Fan \cite{fan1952fixed}): The strategy set $S$ of player $I$ is the set of all regular Borel probability measures on $V$, namely $S=M(V)$. Note that, with respect to the $w^*$ topology, $M(V)$ is a compact convex subset of $C(V)^*$. Similarly, the strategy set $T$ of player $II$ is the set of all Borel probability measures on $U$, i.e., $T=M(U)$, which is a compact convex subset of $C(U)^*$. Consider the linear operator $A:X\rightarrow Z$ defined by $A(\mu):=  \int_{V}p(x,\cdot)d\mu(x)$, and its adjoint $A^*:Y\rightarrow W$, defined by $A^*(\nu):=  \int_{U}p(\cdot,y)d\nu(y)$ (both continuous w.r.t. the corresponding weak topologies). From the above, and the Fubini theorem, the payoff function of $G'$ is defined by
 \begin{equation*}
     u(\mu,\nu):=  \left\langle \nu,A(\mu)\right\rangle=\iint\limits_{V\times U}p(x,y)d\mu(x)d\nu(y).
 \end{equation*}
 Observe that the strategy sets are bases generated by the convex cones $C:=  M(V)_{+}$, $K:=  M(U)_{+}$, where $M(V)_{+}$ ($M(U)_{+}$ resp.) is the set of all non-negative Borel measures on $V$ ($U$ resp.). Their dual cones are the sets $C(V)_{+}$ and $C(U)_{+}$, the cones of all non-negative functions over $C(V)$ and $C(U)$, respectively. For this two-player zero-sum game, the primal-dual pair \hyperlink{newprimaldual}{$\{(P_\beta),(D_\alpha)\}$} is equivalent to a pair of general capacity problems (see Anderson et al. \cite{anderson1989capacity}).

\vspace{-0.1cm}\subsection{Semi-infinite games}\vspace{-0.2cm} 

 Suppose that one of the players, say $I$, initially has an infinite set of pure strategies. Let $X=\mathbb{R}^{(\mathbb{N})}$ be the space of all real sequences with finite support. Also, let $W=\mathbb{R}^{\mathbb{N}}$ be the space of all real sequences. Spaces $X$ and $W$ are paired with the bilinear function $\langle w,x\rangle:= \sum_{i=1}^{\infty}x_iw_i$. Let $Z=Y=\mathbb{R}^{n}$ for some $n\in\mathbb{N}$, equipped with the standard inner product $\langle y,z\rangle:= y^{\top}z$. The strategy sets $S$, $T$ are given by all mixed strategies of the players:
\begin{equation*}
    S:= \left\{x\in\mathbb{R}^{(\mathbb{N})}:x_i\geq0\;\;\forall i\in\mathbb{N},\;\sum_{i=1}^{\infty}x_i=1\right\}\;\;\text{and}\;\;T:= \left\{y\in\mathbb{R}^{n}_{+}:\sum_{j=1}^{n}y_j=1\right\}.
\end{equation*}
These strategy sets are basic cone-leveled sets defined by the convex cones
\begin{equation*}
    C:= \left\{x\in\mathbb{R}^{(\mathbb{N})}:x_i\geq0\;\mbox{for\;}i=1,2,...\right\},\;\;K:=\mathbb{R}^{n}_{+},
\end{equation*}
and normal vectors $\alpha:= (1,1,...)^\top\in\mathbb{R}^{\mathbb{N}}$ and $\beta:= (1,1,...,1)^\top\in\mathbb{R}^n$.
Here, the payoff function is defined over $S\times T$ by a payoff matrix $A$ with infinite rows and $n$ columns as follows:
\begin{equation*}
    u(x,y):= \langle y, A^{\top}x\rangle=\langle Ay,x\rangle=\sum_{i=1}^{\infty}\sum_{j=1}^{n}x_ia_{ij}y_j.
\end{equation*}
Such games are called semi-infinite games (Soyster \cite{soyster1975semi}) and can be equivalently described by a pair of linear SIP problems, for which various solution methods are known (see Shapiro \cite{shapiro2009semi} for a survey).

\vspace{-0.1cm}\subsection{Semidefinite and non-interactive Quantum games} \vspace{-0.2cm}

 We describe here three classes of generalized two-player zero-sum semidefinite games. We first consider the class of games that appears in Section \ref{sec:SDPgames}. Let $X=W=\mathcal{S}^{m}$ be the space of all $m\times m$ real symmetric matrices, equipped with the Frobenius inner product. Also let $Y=Z=\mathbb{R}^{n}$ equipped with the standard inner product $\langle y,z\rangle:= y^{\top}z$. (In the subsequent of this example, in order to avoid confusion, whenever we write $X$, $Y$ we shall mean real symmetric matrices and not topological vector spaces, unless otherwise indicated.) Consider the linear operator $A:\mathcal{S}^{m}\rightarrow \mathbb{R}^{n}$ defined by $A(X):= (A_1\bullet X,A_2\bullet X,...,A_n\bullet X)^{\top}\in\mathbb{R}^{n}$ for some fixed $A_1,A_2,...,A_n\in\mathcal{S}^{m}$. The adjoint of $A$, $A^*:\mathbb{R}^{n}\rightarrow\mathcal{S}^{m}$, is defined by $A^*(y):= \sum_{j=1}^{n}y_jA_j$, for every $y\in\mathbb{R}^n$. This first class of semidefinite games treated is represented by games of the form $G=(S,T,u)$, where $u$ is given by:
\begin{equation*}
    u(X,y):= \langle y,A(X)\rangle=\sum_{j=1}^{n}y_jA_j\bullet X.
\end{equation*}
The strategy set $T$ of player $II$ is the standard n-simplex. Instead of player $I$ having the standard spectraplex as her strategy set (as in Example \ref{ex:4.4}), one can equip her with more general cone-leveled sets, such as
\begin{equation*}
    S:= \bigl\{X\in C: Tr(LX)\in[\lambda_1,\lambda_2]\bigr\},
\end{equation*}
for some $\lambda_1,\lambda_2>0$ and $L\in\mathcal{S}^m$, where $C$ is the convex cone of all positive semidefinite matrices $X$ that satisfy $\sum_{i=1}^{k}P_iXQ_i\in\mathcal{S}^{m}_{+}$ for some fixed (invertible) $m\times m$ matrices $P_i,\;Q_i$. The value of this game eventually boils down to the solution of a primal-dual pair of SDPs.\par
Despite the fact that this class of semidefinite games is not as popular as the next two in the relevant literature, it is strongly tied to the problem of verifying Slater's condition (as seen, for instance, in Example \ref{ex:4.4}), which is a key assumption that commonly appears in the theory of SDPs (e.g., see \cite{pataki2024exponential}, \cite[pp. 46, 76]{wolkowicz2012handbook}).\par

 For the second class of semidefinite games treated, let $X=W=\mathcal{S}^{m}$, $Y=Z=\mathcal{S}^n$, all equipped with the Frobenius inner product. The strategy sets of the players are the standard spectrahedra
\begin{equation*}
    S:= \left\{X\in \mathcal{S}^m_{+}: Tr(X)=1\right\},\;\; T:= \left\{Y\in \mathcal{S}^n_{+}: Tr(Y)=1\right\}.
\end{equation*}
These are bases of the cones of all positive semidefinite matrices $\mathcal{S}^{m}_{+}$ and $\mathcal{S}^{n}_{+}$, respectively. The payoff operator $A:\mathcal{S}^m\rightarrow\mathcal{S}^n$ is now a tensor, and the payoff function takes the form:
\begin{equation*}
    u(X,Y)=Y\bullet A(X)=\sum_{i,j,k,l}X_{ij}A_{ijkl}Y_{kl}
\end{equation*}
for every $X\in S,$ $Y\in T$, where the tensor $A$ satisfies the symmetric properties $A_{ijkl}=A_{jikl}$ and $A_{ijkl}=A_{ijlk}$ for every $i,j\in\{1,2,...,m\},\;k,l\in\{1,2,...,n\}$. The programs $(P_{I_{n}})$, $(D_{I_{m}})$ are once again SDP problems. Games of this form, which can be considered as natural generalizations of bimatrix games, have been studied very recently by Ickstadt et al. \cite{ickstadt2024semidefinite}.\par

The third class of semidefinite games embedded into our game model that is worth mentioning, is that of non-interactive zero-sum quantum games. In relation to the previous one, the (mixed) strategies of each player are now density matrices, belonging to the cone-leveled sets
\begin{equation*}
    S:= \{X\in \mathcal{S}^m_{+}(\mathcal{H}_1): Tr(X)=1\},\;\; T:= \{Y\in \mathcal{S}^n_{+}(\mathcal{H}_2): Tr(Y)=1\},
\end{equation*}
where $\mathcal{S}^m_{+}(\mathcal{H}_1)$ is the set of all $m\times m$ Hermitian positive semidefinite matrices acting on a finite-dimensional complex Euclidean space $\mathcal{H}_1$ (similarly for $\mathcal{S}^n_{+}(\mathcal{H}_2$) and $\mathcal{H}_2$). The payoff operator $A$ is now represented by a super-operator $\Phi:L(\mathcal{H}_1)\rightarrow L(\mathcal{H}_2)$ that preserves hermiticity, where $L(\mathcal{H}_i)$ is the space of all linear operators  $F:\mathcal{H}_i\rightarrow\mathcal{H}_i$, $i=1,2$. The reader who is interested in the different representations of the payoff super-operator $\Phi$, as well as in algorithms for solving non-interactive zero-sum quantum games via semidefinite programming, is referred to Jain and Watrous \cite{jain2009parallel} and the references within.

\vspace{-0.1cm} \subsection{A class of time-dependent games}\vspace{-0.2cm} 

This next class covers a vast number of time-dependent problems that can be interpreted as zero-sum games between two competitors, such as supply chain management, financial market dynamics or inventory and resource allocation problems in time-dependent transportation networks (see \cite{cachon2006game} for an extensive survey on different game-theoretic approaches of the aforementioned). We present a generic mathematical formulation for such time-dependent problems -- inspired by the bottleneck problem originally introduced by Bellman \cite{bellman1966dynamic} -- grounded in the cone-leveled setup of Section \ref{sec:3}. We then apply this model to a network defense problem against adversarial attacks, in the context of cybersecurity.\par

Let $X=L^{\infty}[0,T]^{m}$ and $W=L^1[0,T]^{m}$ paired with their weak and weak$^*$ topologies, respectively. Here, $L^{\infty}[0,T]$ represents the space of all essentially bounded measurable functions on $[0,T]$, and $L^1[0,T]$ represents the space of all Lebesgue integrable functions on $[0,T]$. Recall that $L^1[0,T]^*=L^{\infty}[0,T]$. The spaces $X,\;W$ are equipped with the bilinear form $\langle w,x\rangle:= \int_{0}^T w(t)^{\top}x(t)dt$. Accordingly, let $Y=L^{\infty}[0,T]^{n}$ and $Z=L^1[0,T]^{n}$ equipped with the bilinear form $\langle y,z\rangle:= \int_{0}^{T}y(t)^{\top}z(t)dt$. Let $A:X\rightarrow Z$ defined by 
\vspace{-0.2cm}\begin{equation*}
    (Ax)(t):= B(t)^{\top}x(t)-\int_{t}^{T}C(s,t)^{\top}x(s)ds,\;\; t\in[0,T],
\end{equation*}
and its adjoint $A^*:Y\rightarrow W$ by
\begin{equation*}
    (A^*y)(t):= B(t)y(t)-\int_{0}^{t}C(s,t)y(s)ds, \;\;t\in[0,T].
\end{equation*}
Here, $B(t)$ represents a $m\times n$ matrix for every $t\in[0,T]$, $C(s,t)$ a $m\times n$ matrix for every $s,t\in[0,T]$ which is equal to the zero matrix for $s>t$, and they are both continuous bounded Lebesgue measurable for every $s,t\in[0,T]$. Thus, $A$ is continuous with respect to the weak topology. Now consider the game $G=(S,T,u)$, where the payoff function is defined by 
\vspace{-0.1cm}\begin{equation*}
   u(x,y)=\langle y,Ax\rangle=\int_{0}^{T}x(t)^{\top}B(t)y(t)dt-\int_{0}^{T}\left(\int_{t}^{T}x(s)C(s,t)ds\cdot y(t)\right)dt. 
\end{equation*}
Notice that, due to the Fubini theorem, the payoff function is well defined and the relation $\langle y,Ax\rangle=\langle A^*y,x\rangle$ holds. Further, the strategy sets of the players are 
\begin{equation*}
  S:= \left\{x\in L^{\infty}_{+}[0,T]^{m}:\int_{0}^{T}\alpha(t)^{\top}x(t)dt\in[p_1,q_1]\right\},  
\end{equation*}
\begin{equation*}
    T:= \left\{y\in L^{\infty}_{+}[0,T]^{n}:\int_{0}^{T}\beta(t)^{\top}y(t)dt\in[p_2,q_2]\right\},\hspace{0.1cm}
\end{equation*}
for some $p_1,p_2,q_1,q_2>0$. The space $L^{\infty}_{+}[0,T]$ represents the space of all almost everywhere on $[0,T]$ non-negative bounded Lebesgue measurable functions. In addition, $\alpha\in C^{+}[0,T]^{m}$ and $\beta\in C^{+}[0,T]^{n}$, where $C^{+}[0,T]$ is the space of all non-negative continuous functions on $[0,T]$. These strategy sets are cone-leveled sets defined by the convex cones $C:= L^{\infty}_{+}[0,T]^{m}$ and $K:= L^{\infty}_{+}[0,T]^{n}$.\par

 For a concrete application of this time-continuous model, consider the following game: Player $I$ (network administrator) is searching for a continuous allocation plan of defensive resources over a network, while player $II$ (adversary) is aiming to compromise the network by launching attacks that degrade system performance or expose sensitive data. The strategy (vector) function $x(t)$ represents the allocation of $m$ defensive resources over time $t\in[0,T]$, while $y(t)$ represents the intensity of $n$ types of attacks launched on the network over the same time period. The vector $\alpha(t)$ can be thought of as a resource cost vector (e.g., $\alpha_i(t)=$ cost of deploying resource $i$ at time $t$), and the vector $\beta(t)$ can be similarly interpreted as the unit cost vector of each attack type at time $t$. The intervals $[p_1,q_1],\;[p_2,q_2]$ represent the (average) budget and adversarial demand restrictions over $[0,T]$ for each player, respectively. The matrix $B(t)$ represents effectiveness of defensive resources, i.e., $[B(t)]_{ij}=$ effectiveness of defensive mechanism $i$ against attack $j$ at time $t\in[0,T]$. In turn, the matrix $C(s,t)$ can be interpreted as a propagation matrix that represents the long-term effects of resource allocation. For example, assume that $[C(s,t)]_{ij}$ quantifies the impact of allocating resource 
$i$ at time $s$ on the effectiveness against attack $j$ at a later time $t$. Given the above payoff function, the network administrator is trying to enhance the immediate effectiveness of resource allocations against ongoing attacks and
mitigate the adverse delayed effects of past resource decisions, while the adversary's goal is the exact opposite.\par

 The -- associated with the game -- pair $\{(P_{\beta(t)}),(D_{\alpha(t)})\}$ represents a primal-dual pair of continuous linear programming problems. A number of duality results have long been given for such programs (Levinson \cite{levinson1966class}, Nash and Anderson \cite{nash1987linear}, Shapiro \cite{shapiro2001duality}), and algorithms are known for special cases, such as when $(P_{\beta(t)}),\;(D_{\alpha(t)})$ are separated CLPs (see Weiss \cite{weiss2008simplex}). A similar game analysis works for the reflexive Banach strategy spaces $L^p$, with $p\in(1,\infty)$. Also note that, while the cone of all almost everywhere non-negative functions has empty interior in $L^p$ for $p\in(1,\infty)$, it has non-empty interior for $p=\infty$.

\vspace{-0.1cm}\subsection{Polynomial games}\vspace{-0.2cm}

 Let $X=W=\mathbb{R}^{m+1}$ and $Y=Z=\mathbb{R}^{n+1}$, both equipped with the standard inner product $\langle \nu,\mu\rangle:= \nu^{\top}\mu$. We assume that $m,\;n$ are even positive integers. Also, let $A$ represent an $(n+1)\times (m+1)$ real matrix. The payoff function of the following game is defined by $u(\mu,\nu) := \mu^{\top}A^{\top}\nu$.\par
 
 We define the linear operator $\mathcal{H}:\mathbb{R}^{2k-1}\rightarrow\mathcal{S}^{k}$ whose image of every vector represents the associated Hankel matrix, for some $k\in\mathbb{N}$. That is, let
\begin{equation*}
    \mathcal{H}:\begin{pmatrix}
a_1\\
a_2\\
\vdots\\
a_{2k-1}
\end{pmatrix}\mapsto \begin{pmatrix}
a_1 & a_2 & \ldots & a_{k}\\
a_2 & a_3 & \ldots & a_{k+1}\\
\vdots & \vdots & \ddots & \vdots\\
a_{k} & a_{k+1} & \ldots & a_{2k-1}
\end{pmatrix}.
\end{equation*}
Its corresponding adjoint $\mathcal{H}^*:\mathcal{S}^{k}\rightarrow\mathbb{R}^{2k-1}$ is defined by adding all terms along each antidiagonal of a matrix, i.e., $\mathcal{H}^*(B)=(b_{11},2b_{12},b_{22}+2b_{13},...,2b_{k-1k},b_{kk})^\top$, where $B=(b_{ij})_{i,j=1}^{k}\in\mathcal{S}^k$.\par

The strategy set of player $I$ (player $II$ resp.) contains all $(m+1)$ ($n+1$ resp.) vectors of valid moments for some probability measure in the closed interval $[-1,1]$. By invoking classical results of moment spaces (see Parrilo \cite{parrilo2006polynomial} for a more precise analysis, as well as the references within), it can be shown that these sets take the following equivalent form:
\begin{equation*}
S:= \left\{\mu=(\mu_0,\mu_1,...,\mu_{m})\in\mathbb{R}^{m+1}:\mathcal{H}(\mu)\succeq 0,\;M_1^{\top}\mathcal{H}(\mu)M_1-M_2^{\top}\mathcal{H}(\mu)M_2\succeq 0,\;\alpha^{\top}\mu=1\right\},
\end{equation*}
\begin{equation*}
T:= \left\{\nu=(\nu_0,\nu_1,...,\nu_{n})\in\mathbb{R}^{n+1}:\mathcal{H}(\nu)\succeq 0,\;N_1^{\top}\mathcal{H}(\nu)N_1-N_2^{\top}\mathcal{H}(\nu)N_2\succeq 0,\;\beta^{\top}\nu=1\right\}.\;\;\;\;\;\;
\end{equation*}

Here $\alpha$ ($\beta$ resp.) is the $m+1$ ($n+1$ resp.) vector whose first entry is equal to one and all other entries are equal to zero, and
\begin{equation*}
  M_1:= \begin{pmatrix} 
I_{\frac{m}{2}}\\
0_{1\times\frac{m}{2}}\end{pmatrix},\;\; M_2:= \begin{pmatrix}
0_{1\times\frac{m}{2}}\\
I_{\frac{m}{2}}    
\end{pmatrix},\;\;N_1:= \begin{pmatrix} 
I_{\frac{n}{2}}\\
0_{1\times\frac{n}{2}}\end{pmatrix},\;\;N_2:= \begin{pmatrix}
0_{1\times\frac{n}{2}}\\
I_{\frac{n}{2}}    
\end{pmatrix}. 
\end{equation*}
 These strategy sets are basic cone-leveled defined by the convex cones
\begin{equation*}
C:= \left\{\mu=(\mu_0,\mu_1,...,\mu_{m})\in\mathbb{R}^{m+1}:\mathcal{H}(\mu)\succeq 0,\;M_1^{\top}\mathcal{H}(\mu)M_1-M_2^{\top}\mathcal{H}(\mu)M_2\succeq 0\right\},
\end{equation*}
\begin{equation*}
K:= \left\{\nu=(\nu_0,\nu_1,...,\nu_{n})\in\mathbb{R}^{n+1}:\mathcal{H}(\nu)\succeq 0,\;N_1^{\top}\mathcal{H}(\nu)N_1-N_2^{\top}\mathcal{H}(\nu)N_2\succeq 0\right\}.\;\;\;\;\hspace{0.23cm}
\end{equation*}
The game structure just described represents the ``translation" of Parrilo's \cite{parrilo2006polynomial} work into the cone-leveled game model introduced here. In particular, Parrilo \cite{parrilo2006polynomial} showed that this two-player zero-sum game constitutes the mixed extension of a polynomial game, as its original normal form is described by the payoff function $P(x,y):= \sum_{i=0}^{m}\sum_{j=0}^{n}p_{ij}x^{i}y^{j}$, for $x,y\in[-1,1]$ (or any compact interval by a simple linear transformation) and $p_{ij}=a_{ji}$ for every $i,j$, and that the associated pair \hyperlink{newprimaldual}{$\{(P_\beta),(D_\alpha)\}$} can efficiently be reduced to a pair of SDP problems. Polynomial and ``polynomial-like" games have attracted significant interest since the early days of game theory (see \cite{dresher1950polynomial}), and they are strongly tied to concern (5) of Kuhn and Tucker in the preface of \cite{kuhn1953contributions}. 

\vspace{-0.1cm}\subsection{Homogeneous Separable games}\vspace{-0.2cm}

\noindent From the discussion in Subsection \ref{sec:3.3} we can see that any two-player zero-sum (positive) homogeneous separable game $G=(S,T,u)$ with cone-leveled strategy sets $S,\;T$, defined over finite-dimensional Euclidean spaces $X,\;Y$, can be reduced to a pair of conic convex programs. In such games, the payoff function is of the form
\begin{equation*}
    u(x,y)=\sum_{i=1}^{n}f_i(x)g_i(y),\;\;\forall x\in S,\;\forall y\in T,
\end{equation*}
where every $f_i$ is concave and $q$-homogeneous for some $q\neq0$, and every $g_i$ is convex and $p$-homogeneous for some $p\neq0$. Depending on the form of the associated pair \hyperlink{newprimaldual}{$\{(P_\beta),(D_\alpha)\}$} of convex problems and on other characteristics of the functions $f_i,\;g_i$ (e.g., strong convexity, $L$--smoothness, etc.), one can apply appropriate algorithms (see, for instance, \cite{nesterov1994interior, nocedal1999numerical}).

\vspace{-0.1cm}\section{Extensions to more general strategy sets}\label{sec:6}\vspace{-0.2cm}

Here we extend Theorem \ref{th:3.1} to two-player zero-sum games with strategy sets that are finite intersections and finite unions of cone-leveled sets. The former represent strategies that satisfy finite systems of linear (in)equalities, and the latter have a structure sufficient for the existence and computation of subgame Nash equilibria. Consider the cone-leveled sets
\begin{equation}\label{eq:6.1}
    S_i=\{x\in C_i:\langle\alpha_i,x\rangle\in H_i\},\quad i\in\mathcal{I},\hspace{0.22cm}
\end{equation}
\begin{equation}\label{eq:6.2}
    T_j=\{y\in K_j:\langle y,\beta_j\rangle\in Q_j\},\quad j\in\mathcal{J},
\end{equation}

\noindent where $\{C_i\}_{i\in\mathcal{I}}$, $\{K_j\}_{j\in\mathcal{J}}$ are families of convex cones and, without loss of generality, $Q_j\subseteq [1,\delta_j]$ $\forall j\in\mathcal{J}$ and $H_i\subseteq[\gamma_i,1]$ $\forall i\in\mathcal{I}$, where $\delta_j\geq1\;\forall j\in\mathcal{J},\;\gamma_i\in(0,1]\;\forall i\in\mathcal{I}$. Hereafter, we will assume that $+\infty>|\mathcal{I}|=m\geq 2$ and $+\infty>|\mathcal{J}|=n\geq 2$.

\vspace{-0.1cm}\subsection{Intersections of cone-leveled strategy sets}\label{sec:6.1}\vspace{-0.2cm}

Consider the convex cones $C:= \bigcap_{i=1}^{m}C_i$, $K:= \bigcap_{j=1}^{n}K_j$ and the pair of conic programs $\{(\widehat{P}_{\beta_j}),(\widehat{D}_{\alpha_{i}})\}$ for some $(i,j)\in\mathcal{I}\times\mathcal{J}$. Here, $(\widehat{P}_{\beta_j})$ is the program \hyperlink{primalbeta}{$(P_{\beta_{j}})$} with objective function $\langle\alpha_i,x\rangle$ and the additional set of constraints $\{\langle\alpha_k,x\rangle=\langle\alpha_l,x\rangle\;|\;k\neq l\in\mathcal{I}\}$. Accordingly, $(\widehat{D}_{\alpha_{i}})$ is the program \hyperlink{dualalpha}{$(D_{\alpha_{i}})$} with objective function $\langle y,\beta_j\rangle$ and additional set of constraints $\{\langle y,\beta_s\rangle=\langle y,\beta_t\rangle\;|\;s\neq t\in\mathcal{J}\}$. Note that this does not necessarily represent a primal-dual pair. The extension of Theorem \ref{th:3.1} to finite intersections -- given a zero duality gap between the aforementioned pair -- is natural and identical methods are used, hence certain parts of the proof of the following result will be omitted. In order to remain consistent with Definition \ref{def:2.2}, we will assume, without loss of generality, that whenever $\bigcap_{i=1}^{m}S_i\neq\emptyset$, there exists $x\in\bigcap_{i=1}^{m}S_i$ such that $\langle\alpha_i,x\rangle=1$ for every $i\in\mathcal{I}$. We make a similar assumption for the set $\bigcap_{j=1}^{n}T_j$.\quad\vspace{0.2cm}

\begin{proposition}\label{prop:6.1}
    Consider a two-player zero-sum game $G=(S,T,u)$ defined by \hyperlink{f2}{(F2)} and \hyperlink{f3}{(F3)}, where $S=\bigcap_{i=1}^{m}S_i$, $T=\bigcap_{j=1}^{n}T_j$, and suppose that $\underline{v}>0$. If the pair $\{(\widehat{P}_{\beta_{j_0}}),(\widehat{D}_{\alpha_{i_0}})\}$ is feasible for some $(i_0,j_0)\in\mathcal{I}\times\mathcal{J}$ and $\text{val}(\widehat{P}_{\beta_{j_0}})=\text{val}(\widehat{D}_{\alpha_{i_0}})>0$, then $\underline{v}=\overline{v}$, and $v=\frac{1}{\text{val}(\widehat{P}_{\beta_{j_0}})}$. Moreover, if $x^*$ is an optimal solution for $(\widehat{P}_{\beta_{j_0}})$ and $y^*$ is an optimal solution for $(\widehat{D}_{\alpha_{i_0}})$, then $(vx^*,vy^*)$ is a saddle point.\quad\vspace{0.2cm}
\end{proposition}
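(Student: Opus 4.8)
The plan is to transcribe the proof of Theorem~\ref{th:3.1} almost verbatim, replacing the single normal vector $\alpha$ (resp.\ $\beta$) by the finite family $\alpha_1,\dots,\alpha_m$ (resp.\ $\beta_1,\dots,\beta_n$), and replacing the single-hyperplane reduction by the full multi-hyperplane form of Lemma~\ref{lem:3.2}, which was deliberately stated there with the equality constraints $\langle\alpha_i,x'\rangle=\langle\alpha_j,x'\rangle$. First I would record that, since $C=\bigcap_{i=1}^{m} C_i$, the intersection set is $S=\{x\in C:\langle\alpha_i,x\rangle\in G_i\ \forall i\}$, and symmetrically $T=\{y\in K:\langle y,\beta_j\rangle\in F_j\ \forall j\}$ with $K=\bigcap_{j=1}^{n} K_j$; this is the form on which all the estimates are run.

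On the primal side I would introduce the analogue of $(P1)$, namely $\sup\xi$ subject to $\langle y,Ax\rangle\ge\xi$ for all $y\in T$, $x\in S$, $\xi>0$, whose value is $\underline v$, and relax it to the problem $(\widehat P)$ of Lemma~\ref{lem:3.2} built from $\alpha_1,\dots,\alpha_m$ over the cone $C$. The relaxation is legitimate because, after the normalisation $G_i\subset[\gamma_i,1]$ (right endpoints set to $1$, exactly as in the passage $(P1)\to(P2)$), one has $\{x\in C:\langle\alpha_i,x\rangle=1\ \forall i\}\subseteq S$, so $\underline v\ge\text{val}(\widehat P)$. Lemma~\ref{lem:3.2} then gives $\text{val}(\widehat P_{i_0})=1/\text{val}(\widehat P)$ with $i_0=i$, the equality constraints being precisely what lets the single scaling $x'\mapsto x'/\langle\alpha_{i_0},x'\rangle$ send every $\langle\alpha_i,x'\rangle$ to $1$ simultaneously. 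Using $\langle y,\beta_j\rangle\ge1$ for $y\in T$ and the inclusion $K^*\subseteq\{z:\langle y,z\rangle\ge0\ \forall y\in T\}$ (valid since $T\subseteq K$), I would then restrict $(\widehat P_{i_0})$ to the conic program $(\widehat P_{\beta_j})$: feasibility of the latter forces feasibility of the former, its value dominates, and since $\text{val}(\widehat P_{\beta_j})>0$ every feasible point automatically lies in $C\setminus\bigcap_{i=1}^{m}\mathcal D_i$. Chaining the inequalities yields the analogue of~(\ref{eq:3.1}), $\text{val}(\widehat P_{\beta_j})\ge 1/\underline v$.

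The dual side is entirely symmetric: using the dual counterpart of Lemma~\ref{lem:3.2} for $\beta_1,\dots,\beta_n$ over $K$, the normalisation $F_j\subset[1,\delta_j]$ (so that $\{y\in K:\langle y,\beta_j\rangle=1\ \forall j\}\subseteq T$), and the inclusion $C^*\subseteq\{w:\langle w,x\rangle\ge0\ \forall x\in S\}$ together with $\langle\alpha_i,x\rangle\le1$ on $S$, one obtains the analogue of~(\ref{eq:3.2}), $\text{val}(\widehat D_{\alpha_i})\le 1/\overline v$. Now the hypothesis $\text{val}(\widehat P_{\beta_j})=\text{val}(\widehat D_{\alpha_i})>0$ gives $1/\underline v\le\text{val}(\widehat P_{\beta_j})=\text{val}(\widehat D_{\alpha_i})\le 1/\overline v$, hence $\overline v\le\underline v$, and with~(\ref{eq:2.5}) this forces $\underline v=\overline v=v=1/\text{val}(\widehat P_{\beta_j})$. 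For the saddle point, given an optimal $x^*$ for $(\widehat P_{\beta_j})$ I would note that $x^*$ is feasible for $(\widehat P_{i_0})$, so the equality constraints make all $\langle\alpha_i,x^*\rangle$ equal to $\text{val}(\widehat P_{\beta_j})=1/v$; then $\tilde x:=vx^*$ satisfies $\langle\alpha_i,\tilde x\rangle=1$ and $\tilde x\in C$, whence $\tilde x\in S$, while $Ax^*-\beta_j\in K^*$ gives $\langle y,A\tilde x\rangle\ge v$ for all $y\in T$, so $\tilde x\in\mathcal B^I$. The mirror argument gives $vy^*\in\mathcal B^{II}$, and hence $(vx^*,vy^*)$ is a saddle point.

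The only genuinely new obstacle relative to Theorem~\ref{th:3.1} is that a single scalar multiplication cannot independently normalise the several numbers $\langle\alpha_i,x\rangle$; this is exactly why the reduced problems must carry the equality constraints $\langle\alpha_i,x'\rangle=\langle\alpha_j,x'\rangle$ and why the full multi-hyperplane form of Lemma~\ref{lem:3.2}, rather than its one-hyperplane instance, is indispensable here. I would also be careful that $\{(\widehat P_{\beta_j}),(\widehat D_{\alpha_i})\}$ need not be a genuine primal-dual pair, so the zero-gap hypothesis is used purely as the numerical identity $\text{val}(\widehat P_{\beta_j})=\text{val}(\widehat D_{\alpha_i})$ and never through Proposition~\ref{prop:2.1}; everything else is the routine book-keeping already carried out for the single cone-leveled case.
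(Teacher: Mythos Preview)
Your proof is correct and follows essentially the same route as the paper: invoke the multi-hyperplane form of Lemma~\ref{lem:3.2}, chain the program reductions $(P1)\to(\widehat P)\to(\widehat P_{i_0})\to(\widehat P_{\beta_j})$ (and their dual counterparts), use the equal-values hypothesis to close the chain, and read off the saddle point from optimal solutions via the equality constraints. The paper's version is terser and introduces an intermediate program $(\overline P_{\beta_j})$ to bridge the constraint $x\in C\setminus\bigcap_i\mathcal D_i$ with $x\in C$, whereas you handle this directly via the positivity of $\text{val}(\widehat P_{\beta_j})$; aside from this cosmetic difference (and a harmless slip of terminology---the passage from $(P1)$ to $(\widehat P)$ is a restriction, not a relaxation, though your inequality is stated correctly), the two arguments coincide.
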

\begin{proof}
    Consider the conic programs \hyperlink{phat}{$(\widehat{P}\hspace{0.05cm})$}, \hyperlink{phati0}{$(\widehat{P}_{i_0})$} of Lemma \ref{lem:3.2}. These are equivalent by the latter lemma and, given that one of them is feasible, the relation $\underline{v}\geq \frac{1}{\text{val}(\widehat{P}_{i_0})}$ holds due to Lemma \ref{lem:securitylevels}. In turn, let the program $(\overline{P}_{\beta_{j_0}})$ be the restriction of $(\widehat{P}_{\beta_{j_0}})$ defined by the restricted constraint ``$x\in C\setminus \left(\bigcap_{i=1}^{m}\mathcal{D}_i\right)$". Then, $\text{val}(\widehat{P}_{i_0})\leq \text{val}(\overline{P}_{\beta_{j_0}})$. Moreover, since $(\widehat{P}_{\beta_{j_0}})$ is feasible with positive value, $(\overline{P}_{\beta_{j_0}})$ is also feasible and $\text{val}(\overline{P}_{\beta_{j_0}})=\text{val}(\widehat{P}_{\beta_{j_0}})$, hence $\underline{v}\geq\frac{1}{\text{val}(\widehat{P}_{\beta_{j_0}})}$. By avoiding unnecessary repetitions of similar methods that appear in Theorem \ref{th:3.1} and Lemma \ref{lem:3.2} for the dual counterpart,  the inequality $\overline{v}\leq\frac{1}{\text{val}(\widehat{D}_{\alpha_{i_0}})}$ also holds. By (\ref{eq:2.5}) we get $\underline{v}=\overline{v}$ and $v=\frac{1}{\text{val}(\widehat{P}_{\beta_{j_0}})}$. By invoking Lemmas \ref{lem:securitylevels} and \ref{lem:3.2}, one can show that if $x^*$ is an optimal solution for $(\widehat{P}_{\beta_{j_0}})$, then $vx^*$ is an optimal solution for \hyperlink{phat}{$(\widehat{P}\hspace{0.05cm})$}, thus an optimal strategy for player $I$, as in the proof of Theorem \ref{th:3.1}. Accordingly, if $y^*$ is an optimal solution for $(\widehat{D}_{\alpha_{i_0}})$, then $vy^*$ is an optimal strategy for player $II$, hence $(vx^*,vy^*)$ is a Nash equilibrium.
\end{proof}

\begin{remark} As already mentioned, the pair $\{(\widehat{P}_{\beta_{j_0}}),(\widehat{D}_{\alpha_{i_0}})\}$ is not necessarily a primal-dual pair. As a result, certificates for strong duality, like Proposition \ref{prop:2.1}, cannot be applied. The absence of a duality gap and the existence of optimal solutions boils down to other conditions. For example, if there exist $x^*\in\mathcal{F}(\widehat{P}_{\beta_{j_0}})$, $y^*\in\mathcal{F}(\widehat{D}_{\alpha_{i_0}})$, 
  $(s_1,s_2,...,s_m)\in C_1^*\times C_2^*\times...\times C_m^*$ and $(t_1,t_2,...,t_n)\in K_1^*\times K_2^*\times...\times K_n^*$ such that $Ax^*-\beta_{j_0}=t_1+t_2+...+t_n$, $-A^*y^*+\alpha_{i_0}=s_1+s_2+...+s_m$, $\langle s_k,x^*\rangle=0$ $\forall k\in\mathcal{I}$, and $\langle y^*,t_l\rangle=0$ $\forall l\in\mathcal{J}$, then $\text{val}(\widehat{P}_{\beta_{j_0}})=\text{val}(\widehat{D}_{\alpha_{i_0}})$ and $x^*,y^*$ are optimal solutions (complementary slackness).
  \end{remark}
  
\vspace{-0.1cm}\subsection{Unions of cone-leveled strategy sets}\label{sec:6.2}\vspace{-0.2cm}

When the strategy sets are finite unions of cone-leveled sets, then checking for the existence of subgame Nash equilibria is straightforward: One simply has to restrict to some subgame $G_{ij}=(S_i,T_j,u)$ and verify the assumptions of Theorem \ref{th:3.1}. Of course, if $S,\;T$ are finite unions of bases of convex cones, then every $G_{ij}$ is guaranteed to have a saddle point by Theorem \ref{th:3.3}. In the next proposition we give sufficient conditions for a subgame Nash equilibrium to be a global one in the former, more general case.\par

 First, let $S:=\bigcup_{i=1}^{m}S_i$, $T:= \bigcup_{j=1}^{n}T_j$, where $S_i,\;T_j$ are as in (\ref{eq:6.1}), (\ref{eq:6.2}). We define the program $(P_{i\cdot}')$ as the restriction of \hyperlink{p1}{$(P1)$} described by the restricted constraint ``$x\in S_i$", for $i\in\mathcal{I}$. Accordingly, let $(D_{\cdot j}')$ be the restriction of \hyperlink{d1}{$(D1)$} described by the restricted constraint $y\in T_j$ for $j\in\mathcal{J}$. Then $\text{val}(P_{i\cdot}')\leq \text{val}(P1)=\underline{v}$ and $\text{val}(D_{\cdot j}')\geq \text{val}(D1)=\overline{v}$ by Lemma \ref{lem:securitylevels}. In association with the above, let the pair $\{(P_{ij}'),(D_{ij}')\}$, where $(P_{ij}')$ is identical to the problem $(P_{i\cdot}')$ but $T$ is replaced by $T_j$. Accordingly, $(D_{ij}')$ is defined similarly to $(D_{\cdot j}')$, but $S$ is replaced by $S_i$. Lastly, consider the following primal-dual pair of conic linear programs:
\hypertarget{primaldualij}{}\hypertarget{pij}{} \hypertarget{dij}{}
\begin{center}
\vspace{-0.7cm}\begin{equation*}{(P_{ij})\;\;\;\;\;\;\;\;}
\begin{array}{ll}
    \inf\limits_{x}\;\; \langle \alpha_i,x\rangle\\
    \text{s.t.}\;\; Ax-\beta_j\in K_j^*\\ \;\;\;\;\;\;\;x\in C_i
    \end{array}
\end{equation*}
\end{center}
\begin{center}
\vspace{-0.7cm}\begin{equation*}{\;\;\;\;\;(D_{ij})\;\;\;\;\;\;\;\;}
\begin{array}{ll}
    \sup\limits_{y}\;\; \langle y,\beta_j\rangle\\
    \text{s.t.}\;\; -A^*y+\alpha_i\in C_i^*\\ \;\;\;\;\;\;\;\;y\in K_j
    \end{array}
    \end{equation*}
\end{center}
\begin{proposition}\label{prop:6.2}
    Consider a two-player zero-sum game $G=(S,T,u)$ defined by \hyperlink{f2}{(F2)} and \hyperlink{f3}{(F3)}, with strategy sets $S,\;T$ that are unions of cone-leveled sets, as above, and suppose that $\underline{v}>0$. If the pair \hyperlink{primaldualij}{$\{(P_{ij}),(D_{ij})\}$} is feasible for some $(i,j)\in\mathcal{I}\times\mathcal{J}$ such that $\text{val}(P_{ij})=\text{val}(D_{ij})>0$, $\text{val}(P_{ij}')=\text{val}(P_{i\cdot}')$ and $\text{val}(D_{ij}')=\text{val}(D_{\cdot j}')$, then $\underline{v}=\overline{v}$, and $v=\frac{1}{\text{val}(P_{ij})}$. Moreover, if $x^*$ is an optimal solution for \hyperlink{pij}{$(P_{ij}$)} and $y^*$ is an optimal solution for \hyperlink{dij}{$(D_{ij})$}, then $(vx^*,vy^*)$ is a saddle point. 
\end{proposition}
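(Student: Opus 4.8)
The plan is to treat the indicated subgame $G_{ij}=(S_i,T_j,u)$ as a stand‑alone game of the type covered by Theorem \ref{th:3.1} and then glue its value and optimal strategies back onto the full game using the two value‑matching hypotheses. First I would record that $G_{ij}$ satisfies \hyperlink{f1}{(F1)}--\hyperlink{f3}{(F3)}: $S_i,T_j$ are cone‑leveled over $C_i,K_j$ with normals $\alpha_i,\beta_j$, the payoff is $u(x,y)=\langle y,Ax\rangle$, its lower and upper values are exactly $\underline{v}_{ij}=\text{val}(P_{ij}')$ and $\overline{v}_{ij}=\text{val}(D_{ij}')$, and the associated primal–dual pair in the sense of Theorem \ref{th:3.1} is precisely $\{(P_{ij}),(D_{ij})\}$. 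Before invoking the theorem I must check $\underline{v}_{ij}>0$: taking a feasible $x'$ of $(P_{ij})$ with $\langle\alpha_i,x'\rangle$ close to $\text{val}(P_{ij})>0$ and applying the scaling of Lemma \ref{lem:3.2} yields the feasible pair $(\xi x',\xi)$ with $\xi=1/\langle\alpha_i,x'\rangle>0$ for the ``$\sup\xi$'' problem of the subgame, so $\underline{v}_{ij}\ge\xi>0$.

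With $\underline{v}_{ij}>0$, the consistency of $\{(P_{ij}),(D_{ij})\}$ and $\text{val}(P_{ij})=\text{val}(D_{ij})>0$ in hand, Theorem \ref{th:3.1} applies verbatim to $G_{ij}$ and gives $\underline{v}_{ij}=\overline{v}_{ij}=v_{ij}=1/\text{val}(P_{ij})$, together with the subgame saddle point $(v_{ij}x^*,v_{ij}y^*)$ whenever $x^*,y^*$ are optimal for $(P_{ij}),(D_{ij})$. The second step is a sandwich for the global value. By construction $\text{val}(P_{i\cdot}')\le\underline{v}$ and $\text{val}(D_{\cdot j}')\ge\overline{v}$, while the hypotheses read $\text{val}(P_{ij}')=\text{val}(P_{i\cdot}')$ and $\text{val}(D_{ij}')=\text{val}(D_{\cdot j}')$. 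Since $v_{ij}=\text{val}(P_{ij}')=\text{val}(D_{ij}')$, these combine into
\begin{equation*}
\overline{v}\;\le\;\text{val}(D_{\cdot j}')\;=\;v_{ij}\;=\;\text{val}(P_{i\cdot}')\;\le\;\underline{v},
\end{equation*}
and because (\ref{eq:2.5}) forces $\underline{v}\le\overline{v}$, all quantities coincide, giving $\underline{v}=\overline{v}=v=v_{ij}=1/\text{val}(P_{ij})$.

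The remaining, and in my view hardest, step is to upgrade the subgame saddle point $(\tilde{x},\tilde{y})=(vx^*,vy^*)$ to a Nash equilibrium of the full game $G$. From Theorem \ref{th:3.1} applied to $G_{ij}$ and Lemma \ref{lem:3.5} I have $\tilde{x}\in S_i$ with $\inf_{y\in T_j}u(\tilde{x},y)=v$ and $\tilde{y}\in T_j$ with $\sup_{x\in S_i}u(x,\tilde{y})=v$, and $u(\tilde{x},\tilde{y})=v$. The obstacle is that $(P_{ij})$ only certifies player $I$'s payoff against $T_j$ via $K_j^*$, not against the larger set $T$, and dually for $\tilde{y}$; writing $\langle y,A\tilde{x}\rangle$ for $y$ in some other block $T_k$, the membership $A\tilde{x}-v\beta_j\in K_j^*$ no longer controls the sign, so $u(\tilde{x},y)\ge v$ need not be automatic. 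The role of the two matching hypotheses is exactly to repair this: $\tilde{x}\in S_i$ already yields $\inf_{y\in T}u(\tilde{x},y)\le\sup_{x\in S_i}\inf_{y\in T}u(x,y)=\text{val}(P_{i\cdot}')=v$, and the plan is to promote this to an equality, i.e. $u(\tilde{x},y)\ge v$ for all $y\in T$, with the symmetric statement $u(x,\tilde{y})\le v$ for all $x\in S$ following from $\text{val}(D_{ij}')=\text{val}(D_{\cdot j}')$; together with $u(\tilde{x},\tilde{y})=v$ these are precisely (\ref{eq:2.6}) for $G$. The delicate point I expect to fight with is that the equality of the two suprema must be transferred to the \emph{particular} optimizers $\tilde{x},\tilde{y}$ delivered by the conic programs, and not merely to some optimizer in $S_i,T_j$; concretely I would aim to show that under these hypotheses every maximizer of $\inf_{y\in T_j}u(\cdot,y)$ over $S_i$ is also a maximizer of $\inf_{y\in T}u(\cdot,y)$ over $S_i$ (and dually for player $II$), which is exactly where the structure of the cone‑leveled blocks $C_i,K_j$ and the linearity of $A$ would have to be used.
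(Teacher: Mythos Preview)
Your argument for the value equality is exactly the paper's: apply Theorem \ref{th:3.1} to the subgame $G_{ij}$ to get $\text{val}(P_{ij}')=\text{val}(D_{ij}')=1/\text{val}(P_{ij})$, then use the two matching hypotheses together with $\text{val}(P_{i\cdot}')\le\underline{v}$, $\text{val}(D_{\cdot j}')\ge\overline{v}$ and (\ref{eq:2.5}) to sandwich. Your extra care about $\underline{v}_{ij}>0$ is a point the paper leaves implicit.

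Where you diverge from the paper is the saddle-point step, and here you are actually more scrupulous than the paper. The paper's proof of that part is one sentence: from ``$vx^*$ is optimal for $(P_{ij}')$'' it writes ``It follows that $vx^*$ is an optimal solution for $(P1)$'', and symmetrically for $y^*$. No further justification is given. The concern you raise in your third paragraph---that feasibility in $(P_{ij}')$ only guarantees $\langle y,A\tilde{x}\rangle\ge v$ for $y\in T_j$, not for $y\in T$, and that the equality $\text{val}(P_{ij}')=\text{val}(P_{i\cdot}')$ of \emph{values} does not by itself transfer optimality to the \emph{particular} point $\tilde{x}$---is entirely legitimate. Indeed, one can build toy examples with two points $a,b\in S_i$ both optimal for $\inf_{y\in T_j}u(\cdot,y)$ while only $b$ is optimal for $\inf_{y\in T}u(\cdot,y)$, yet $\text{val}(P_{ij}')=\text{val}(P_{i\cdot}')$ still holds. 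So the transfer step you are worried about is not automatic from the stated hypotheses, and the paper supplies nothing beyond the bare assertion. You have not missed an idea the paper uses; you have identified a gap the paper glosses over.
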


\begin{proof}
    Take the two-player zero-sum subgame $G_{ij}=(S_i,T_j,u)$. Theorem \ref{th:3.1} yields 
    \begin{equation}\label{eq:6.3}
    \text{val}(P_{ij}')=\text{val}(D_{ij}')=\displaystyle\frac{1}{\text{val}(P_{ij})}.
    \end{equation}
    Now $\text{val}(P_{ij}')=\text{val}(P_{i\cdot}')\leq\underline{v}$ and $\text{val}(D_{ij}')=\text{val}(D_{\cdot j}')\geq\overline{v}$. By (\ref{eq:6.3}) and (\ref{eq:2.5}) we finally get 
    \begin{equation*}
        \underline{v}=\overline{v}=\displaystyle\frac{1}{\text{val}(P_{ij})}.
        \end{equation*}
    Additionally, if $x^*$ is an optimal solution for \hyperlink{pij}{$(P_{ij})$}, then $\text{val}(P_{ij}')x^*$ is an optimal solution for $(P_{ij}')$ by Theorem \ref{th:3.1}. It follows that $\text{val}(P_{ij}')x^*=vx^*$ is an optimal solution for \hyperlink{p1}{$(P1)$}, that is, it is an optimal strategy for player $I$ (due to Lemma \ref{lem:securitylevels}). Similarly, if $y^*$ is an optimal solution for \hyperlink{dij}{$(D_{ij})$}, then $vy^*$ is an optimal strategy for player $II$, and $(vx^*,vy^*)$ is a saddle point.
\end{proof}

\vspace{-0.1cm}\section{Future directions}\label{sec:7}\vspace{-0.2cm}

After proving that every two-player zero-sum game with cone-leveled strategy sets can be described by a primal-dual pair of conic linear programs, the following question naturally arises: \textit{When is a two-player zero-sum game equilibrium-equivalent to a game with cone-leveled strategy sets?} We believe that an answer to this question will offer a significant contribution towards the associated mathematical concern raised by Kuhn and Tucker \cite{kuhn1953contributions}, regarding the existence of ``\textit{a computational technique of general
applicability for zero-sum two-person games with infinite sets of strategies}". The translation of Parrilo's work on polynomial games \cite{parrilo2006polynomial} into our framework indicates the existence of strategy sets that can be written in the cone-leveled form, even though their original form does not directly imply so. Therefore, a deeper search regarding equivalent strategic forms of known two-player zero-sum games, for which the existence and computation of Nash equilibria is of high interest, is encouraged.\par

In addition, the almost equivalence provides a, perhaps surprising, new insight regarding the determination of strict feasibility. For semidefinite programming in particular, the decision of feasibility in polynomial time is a fundamental open problem (Pataki and Touzov \cite{pataki2024exponential}). The following is a corollary of Theorem \ref{th:4.1}: \textit{Determining strict feasibility for a pair of SDPs in polynomial time is equivalent to either constructing a semidefinite game in polynomial time with non-zero value, or constructing one in polynomial time so that a zero game value and the infeasibility of a certain null set are guaranteed}. Here, by ``construction" of a semidefinite game in polynomial time we mean the construction of a base of the non-negative orthant, and a base of the cone of positive semidefinite matrices, in polynomial time. This equivalent formulation signifies a novel approach to solving this open problem.\par

The analysis of our work is also related to the theory of zero-sum polymatrix games, which are multiplayer generalizations of two-player zero-sum games defined by networks \cite{cai2011minmax}. Cai et al. \cite{cai2016zero} proved that such games have Nash equilibria which can be efficiently computed by a certain linear program. In a more recent paper, Ickstadt et al. \cite{ickstadt2026semidefinite} generalized this concept and defined network semidefinite games, with the aim of extending models of quantum strategic interactions to multiplayer games. In particular, they showed that such games can be solved by an SDP, whose form is very similar to the LP that appears in \cite{cai2016zero}. Although these zero-sum multiplayer games are defined over graphs, their payoff functions are still linear (defined by specific linear operators) and the strategy sets of the players represent cone-leveled sets (in fact bases of convex cones). Further, the main existential results on Nash equilibria in both works rely heavily on duality theory. Another natural direction is therefore associated with the extension of the definition of polymatrix games to a more general strategic setup characterized by \hyperlink{f1}{(F1)}--\hyperlink{f3}{(F3)}, and with the use of strong duality to show that the existence and calculation of Nash equilibria for such games boils down to the solution of a specific conic linear program.

\vspace{-0.3cm}\section*{Acknowledgements}\vspace{-0.3cm}

I would like to thank Eilon Solan for his helpful comments and suggestions on an early draft of this paper, as well as Alexander Shapiro for helping me develop a better understanding concerning some important parts of his work \cite{shapiro2001duality}. I am very grateful to G\'abor Pataki for introducing me to some deeper concepts of conic linear programming related to this work. I would also like to thank Panayotis Mertikopoulos for pointing out \cite{ickstadt2024semidefinite}. Special thanks to Panagiotis Andreou and Pavlos Zoubouloglou, whose suggestions led to great improvements of this paper.

\vspace{-0.5cm}\bibliography{references}

\end{document}